\newcommand{\hF}{\widehat{F}}
\newcommand{\cS}{\mathcal{S}}
\newcommand{\cT}{\mathcal{T}}
\newcommand{\cM}{\mathcal{M}}
\newcommand{\cN}{\mathcal{N}}
\newcommand{\Ome}{\Omega}
\newcommand{\oOme}{\overline{\Omega}}
\newcommand{\NJ}{\mathbb{N}_J}
\newcommand{\bv}{\mathbf{ v}}
\newcommand{\del}{\delta}
\newcommand{\ou}{\overline{u}}
\newcommand{\uu}{\underline{u}}
\newcommand{\p}{\partial}
\newcommand{\bh}{\mathbf{h}}
\newcommand{\bx}{\mathbf{x}}
\newcommand{\bs}{\mathbf{s}}
\newcommand{\by}{\mathbf{y}}
\newcommand{\bz}{\mathbf{z}}
\newcommand{\bw}{\mathbf{w}}
\newcommand{\bk}{\mathbf{k}}
\newtheorem{remark}{Remark}[section]
\begin{document}

\title{A Narrow-stencil finite difference method for approximating viscosity solutions 
of fully nonlinear elliptic partial differential equations with applications to Hamilton-Jacobi-Bellman equations} 
 
\author{
Xiaobing Feng\thanks{Department of Mathematics, 
The University of Tennessee, Knoxville, 
TN 37996, U.S.A. {\tt xfeng@math.utk.edu}.
The work of this author was partially supported by the NSF grants DMS-1620168 and DMS-1318486.}
\and
Thomas Lewis\thanks{Department of Mathematics and Statistics, 
The University of North Carolina at Greensboro, 
Greensboro, NC 27402, U.S.A.  {\tt tllewis3@uncg.edu}.}
}

\maketitle

\begin{abstract}
This paper presents a new narrow-stencil finite difference method for approximating 
the viscosity solution of second order fully nonlinear elliptic partial differential equations 
including Hamilton-Jacobi-Bellman equations. The proposed finite difference method naturally extends the Lax-Friedrichs method for first order problems 
to second order problems by introducing a key stabilization and guiding term called a ``numerical moment".
The numerical moment uses the difference of two (central) 
Hessian approximations to resolve the potential low-regularity 
of viscosity solutions. It is proved that the proposed scheme is well posed (i.e, 
it has a unique solution) and stable in both the $\ell^2$-norm 
and the $\ell^\infty$-norm. The highlight of the paper is to prove the convergence 
of the proposed scheme to the viscosity solution of the underlying fully nonlinear 
second order problem using a novel discrete comparison argument.   
This paper extends the one-dimensional analogous method of
\cite{Feng_Kao_Lewis13} to the higher-dimensional setting.  
Numerical tests are presented to gauge the performance of the proposed 
finite difference methods and to validate the convergence result of the paper.  
\end{abstract}

\begin{keywords}
Fully nonlinear PDEs, 
viscosity solutions, 
Hamilton-Jacobi-Bellman equations,  
finite difference methods, 
generalized monotonicity, 
numerical operators, 
numerical moment.
\end{keywords}

\begin{AMS}
65N06, 65N12
\end{AMS}

\pagestyle{myheadings}
\thispagestyle{plain}
\markboth{XIAOBING FENG AND THOMAS LEWIS}{A NARROW-STENCIL FD METHOD FOR FULLY NONLINEAR PDES} 

\section{Introduction}\label{intro}

This paper develops and analyzes a narrow-stencil finite difference approximation of
the viscosity solution to the following fully nonlinear second order Dirichlet boundary 
value problem:
\begin{subequations}\label{FD_problem}
\begin{alignat}{2}
F[u](\bx)\equiv F \left( D^2 u, \nabla u, u, \bx \right) 
	& = 0, &&\qquad \forall \bx \in \Omega,
	\label{FD_pde} \\
u(\bx) & = g(\bx), && \qquad \forall \bx \in \partial \Omega,  \label{FD_bc} 
\end{alignat}
\end{subequations}
where $\Omega \subset \mathbb{R}^d$ ($d\geq 2$) is a bounded domain and $D^2u(\bx)$ 
denotes the Hessian matrix of $u$ at $\bx$. 
The PDE operator $F$ is a fully nonlinear second order differential operator in the 
sense that $F$ is nonlinear in $D^2 u$ (or in at least one of its components).  
We assume $g$ is continuous and $F$ is Lipschitz in its first three arguments. 
Moreover, $F$ is assumed to be {\em elliptic} and satisfy a comparison principle
(defined in Section \ref{elliptic_section}). 
The paper focusses on a particular class of fully nonlinear PDEs called 
Hamilton-Jacobi-Bellman (HJB) equations 
which correspond to $F[u]=H[u]$, where 
\begin{subequations}\label{HJB}
\begin{align} 
H[u] &\equiv \min_{\theta \in \Theta} \bigl( L_\theta u - f_\theta \bigr), \\
L_\theta u &= A^{\theta}(\bx) : D^2 u + b^{\theta}(\bx) \cdot \nabla u + c^{\theta}(\bx) \, u . 
\end{align}
\end{subequations}
HJB equations arise in various applications including stochastic optimal control, game theory, 
and mathematical finance (cf. \cite{Fleming_Soner06}). 

Numerical fully nonlinear PDEs has experienced some rapid developments in recent years.   
Various numerical methods have been proposed and tested (cf. \cite{Feng_Glowinski_Neilan10,NSZ16} 
and the references therein). In order to guarantee convergence to the viscosity solution of 
problem \eqref{FD_problem}, most of these works follow the framework laid out by 
Barles-Souganidis in \cite{Barles_Souganidis91} which requires that a numerical scheme 
be monotone, consistent, and stable. It follows from a well-known result of \cite{MotzkinWasow53}
that, in general, monotone schemes are necessarily wide-stencil schemes  
as seen in \cite{Debrabant_Jakobsen13,Feng_Jensen,Oberman08,Salgado_Zhang16}. 
Consequently, to design convergent narrow-stencil schemes, one has to abandon the standard 
monotonicity requirement 
(in the sense of Barles-Souganidis \cite{Barles_Souganidis91}) 
and, instead, deal with non-monotone schemes. 
This in turn prohibits one to directly use the powerful and handy Barles-Souganidis (analysis) framework.  
On the other hand, it should be pointed out that monotone narrow-stencil schemes could be 
constructed for problem \eqref{FD_problem} in special cases. For example, when the matrix $A^\theta$ is
diagonally dominant for all $\theta$, the methods proposed in \cite{Kushner_Dupuis,Bonnans_Zidani}
become narrow-stencil schemes even though they require wide-stencils for general negative semi-definite  
matrices $A^\theta$. For isotropic coefficient matrices $A^\theta$ (i.e., $A^\theta$ diagonal), 
a monotone linear finite element method was proposed in \cite{Jensen_Smears}.  
Finally, we also mention that when $A^\theta$ satisfies a Cordes condition, a least 
square-like interior penalty discontinuous Galerkin (IP-DG) method was proposed in \cite{Smears_Suli}.
This method is obviously a narrow-stencil scheme, and it was proved to be convergent 
even though it is not monotone.  

The primary goal of this paper is to develop a convergent narrow-stencil finite difference 
method for problem \eqref{FD_problem} in the general case when $F$ is uniformly elliptic.  
Thus, we only require $A^\theta$ to be uniformly negative definite 
as long as the underlying PDE satisfies the comparison principle.  
The method to be introduced in this paper can be regarded as a high  
dimensional extension of the one-dimensional finite difference method developed and analyzed by 
the authors in \cite{Feng_Kao_Lewis13}. We note that this extension is far from trivial in terms 
of both method design and convergence analysis because of the additional difficulties caused by
the second order mixed derivatives on the off-diagonal entries of the Hessian matrix $D^2u$ for $d\geq 2$.
Indeed, constructing ``correct" finite difference discretizations for those mixed derivatives 
and controlling their ``bad" effect in the convergence analysis poses some significant new 
challenges. Recall that the main reason to use wide-stencils in the works cited above
is exactly to ``correctly" approximate the second order mixed derivatives. Hence, the new 
challenges which must be dealt with in this paper can be regarded as the price we need to pay for 
not using wide-stencils.  

To achieve our goal, several key ideas are introduced and utilized in this paper.  
First, we introduce multiple discrete approximations of $\nabla u$ and $D^2 u$ to locally 
capture the behavior of the underlying function $u$ when it has low regularity.  
The reason for using such multiple approximations is to compensate for the low resolution of a fixed 
grid, especially when using a narrow-stencil finite difference approximation.  
Second, as explained above, using narrow-stencils forces us to abandon the hope of 
constructing monotone finite difference schemes (again, in the sense of Barles-Souganidis 
\cite{Barles_Souganidis91}).  However, in order to obtain a convergent scheme, the anticipated 
finite difference method must have some ``good" structures.  Our key motivating idea is to impose 
such a structure, called {\em generalized monotonicity}, on our
finite difference scheme. We shall demonstrate that besides allowing for the use of narrow-stencils,
generalized monotonicity is a more natural concept/structure than the usual monotonicity concept/structure, 
and it is much easier to verify. 
Third, to obtain practical generalized monotone finite difference methods, two important questions 
to answer are: how to build such a scheme and what is the primary mechanism that makes the method work?  
Inspired by the vanishing moment method developed in \cite{Feng_Neilan09a,Feng_Neilan11}, we propose
a higher order stabilization term/mechanism, called a {\em numerical moment}, as a means to address 
these questions. 
We will see that the stabilization term/mechanism is the key for us to
overcome the lack of monotonicity in the admissibility, stability, and convergence analysis. 

The remainder of this paper is organized as follows. In Section~\ref{visc_sec} we introduce the
basics of viscosity solution theory and elliptic operators as well as the corresponding notation.  
We also define our primary finite difference operators that will be used throughout 
the paper and list some identities for those difference operators.  In Section~\ref{FD_method_sec}
we first formulate our finite difference method and then prove several properties of the 
scheme. Section~\ref{stability_admissibility_sec} is devoted to studying the well-posedness of the scheme. 
A contraction mapping technique based on a pseudo-time explicit Euler scheme is employed to prove the existence 
and uniqueness for the proposed finite difference scheme and to derive an $\ell^2$-norm stability estimate.  
In Section ~\ref{H2-stability_sec}, an $\ell^2$-norm stability estimate for the second-order central difference quotients of the numerical solution is first derived using a Sobolev iteration technique.  
Then an $\ell^\infty$-norm stability estimate is derived based on combining both $\ell^2$ estimates. 
The results in Sections \ref{stability_admissibility_sec} and \ref{H2-stability_sec} rely upon some 
auxiliary linear algebra results that can be found in Appendix~\ref{appendix_sec}. 
In Section \ref{conv_proof_sec} we present 
a detailed convergence analysis for the proposed finite difference method.  
In Section \ref{FD_2D_numerics} we provide 
some numerical experiments that validate and complement the theory. The paper is concluded in 
Section~\ref{conc_sec} with a brief summary and some final remarks.  

\section{Preliminaries} \label{visc_sec}

We begin by recalling the viscosity solution concept for problem \eqref{FD_problem} 
as documented in \cite{Crandall_Lions83, Crandall_Lions_Evans84,
viscosity_guide} and defining basic difference operators that will be used to construct our 
finite difference method.   We define the relevant function space notation in Section~\ref{spaces_sec},  state the definition of viscosity solutions in Section~\ref{viscosity}, 
and recall the definition of ellipticity and the comparison principle in Section~\ref{elliptic_section}. 
The comparison principle ensures uniqueness of solutions to \eqref{FD_problem}. 
The specific assumptions for HJB equations will be given in Section~\ref{hjb-sec}.
Lastly, in Section~\ref{diff_quotients_sec},  we define our basic difference operators  
which will serve as the building blocks for constructing various discrete gradient and Hessian operators.
The discrete gradient and Hessian operators will be used to define 
our narrow-stencil finite difference method in Section~\ref{FD_method_sec}.  

\subsection{Function spaces and notation} \label{spaces_sec}

Standard function and space notation as in \cite{Caffarelli_Cabre95} and \cite{Gilbarg_Trudinger01}
will be adopted in this paper. For example, for a bounded open domain $\Omega \subset \mathbb{R}^d$, 
$B(\Omega)$, $USC(\Omega)$, and  $LSC(\Omega)$ are used to denote, respectively, the spaces of bounded, upper semicontinuous, and lower semicontinuous functions on $\Omega$.
For any $v\in B(\Ome)$, we define 
\[
v^*(\bx)\equiv \limsup_{\by\to \bx} v(\by) \qquad\mbox{and}\qquad
v_*(\bx)\equiv \liminf_{y\to \bx} v(\by). 
\]
Then, $v^*\in USC(\Omega)$ and $v_*\in LSC(\Omega)$, and they are called
the upper and lower semicontinuous envelopes of $v$, respectively. 
We also let $\mathcal{S}^{d \times d} \subset \mathbb{R}^{d \times d}$ 
denote the set of symmetric real-valued matrices.  

In presenting the relevant viscosity solution concept for fully nonlinear second 
order PDEs, we shall use $G$ to denote a general fully nonlinear second order operator.  
More precisely, $G: \cS^{d\times d}\times\mathbb{R}^d\times \mathbb{R}\times \oOme \to \mathbb{R}$. 
The general second order fully nonlinear PDE problem is to seek a locally bounded function 
$u : \oOme \to \mathbb{R}$ such that $u$ is a viscosity solution of
\begin{align} \label{PDE_wbc}
G(D^2 u,\nabla u, u, \bx) & = 0 \qquad\mbox{in } \oOme ,
\end{align}
where we have adopted the convention of writing the boundary condition as a
discontinuity of the PDE (cf. \cite[p.274]{Barles_Souganidis91}).   
We shall treat the Dirichlet boundary condition explicitly in the formulation of our
finite difference method for \eqref{PDE_wbc}.  


\subsection{Viscosity solutions} \label{viscosity}

Due to the full nonlinearity of the differential operator in \eqref{PDE_wbc}, the standard weak 
solution concept based upon lowering the order of derivatives on the solution function 
using integration by parts is not applicable. 
Thus, a different solution concept is necessary for fully nonlinear PDEs. Among several 
available weak solution concepts for fully nonlinear PDEs, the best known one is the 
{\em viscosity solution} notion whose definition is stated below.
 
\begin{definition} \label{visc_def}
Let $G$ denote the second order operator in \eqref{PDE_wbc}.

(i) A locally bounded function $u: \overline{\Omega} \to \mathbb{R}$ 
is called a {\em viscosity subsolution} of \eqref{PDE_wbc}
if $\; \forall \varphi\in C^2 ( \overline{\Omega})$, when $u^*-\varphi$ has a local 
maximum at $\bx_0 \in \overline{\Omega}$, 
\[
G_*(D^2\varphi(\bx_0), \nabla \varphi(\bx_0),u^*(\bx_0),\bx_0) \leq 0 .
\]

(ii) A locally bounded function $u: \overline{\Omega} \to \mathbb{R}$ is 
called a {\em viscosity supersolution} of \eqref{PDE_wbc} if 
$\; \forall \varphi\in C^2 (\overline{\Omega})$, when $u_*-\varphi$ has a local 
minimum at $\bx_0 \in \overline{\Omega}$, 
\[
G^*(D^2\varphi(\bx_0), \nabla \varphi(\bx_0),u_*(\bx_0),\bx_0) \geq 0 .
\]

(iii) A locally bounded function $u: \overline{\Omega} \to \mathbb{R}$ 
is called a {\em viscosity solution} of \eqref{PDE_wbc}
if $u$ is both a viscosity subsolution and a viscosity supersolution of \eqref{PDE_wbc}.
\end{definition}

\medskip 
\begin{remark} \label{rem2.1} 
(a) Applying the above definition to the Dirichlet boundary value problem \eqref{FD_problem}, 
we require the viscosity solution $u$ satisfies the Dirichlet boundary condition \eqref{FD_bc} 
in the {\em viscosity sense}. Using the notation of \cite[Section 7] {viscosity_guide}, this 
can be described as follows. First, define the boundary operator 
$B \in C \left( \mathcal{S}^{d \times d} , \mathbb{R}^d , \mathbb{R} , \partial \Omega \right)$ by 
\begin{equation} \label{PDE_bcB}
	B(A , \mathbf{q} , v , \bx) \equiv v(\bx) - g(\bx) 
\end{equation}
based on the Dirichlet boundary condition \eqref{FD_bc}, and then define the differential operator 
$G : \cS^{d\times d}\times\mathbb{R}^d\times \mathbb{R}\times \oOme \to \mathbb{R}$ by 
\[
	G(D^2u, \nabla u, u, \bx)\equiv \begin{cases}
	F[u](\bx) , & \text{if } \bx \in \Omega , \\ 
	B[u](\bx) , & \text{if } \bx \in \partial \Omega . 
	\end{cases}
\]
Then we have 
\[
	G_* \left( A , \mathbf{q} , v , \bx \right) = \begin{cases} 
	F \left( A , \mathbf{q} , v , \bx \right) , & \text{if } \bx \in \Omega , \\ 
	\min \big\{ F \left( A , \mathbf{q} , v , \bx \right) , B \left( A , \mathbf{q} , v , \bx \right) \big\} , 
		& \text{if } \bx \in \partial \Omega 
	\end{cases}
\]
and 
\[
	G^* \left( A , \mathbf{q} , v , \bx \right) = \begin{cases} 
	F \left( A , \mathbf{q} , v , \bx \right) , & \text{if } \bx \in \Omega , \\ 
	\max \big\{ F \left( A , \mathbf{q} , v , \bx \right) , B \left( A , \mathbf{q} , v , \bx \right) \big\} , 
		& \text{if } \bx \in \partial \Omega . 
	\end{cases}
\]
Using the boundary operator  $B$, we have explicitly written the boundary condition 
as a discontinuity of the PDE operator $G$.   

(b) If $g$ is continuous, then another way to enforce the Dirichlet boundary condition is
in the pointwise sense.  As explained in \cite{viscosity_guide}, the enforcing of the Dirichlet 
boundary condition in the viscosity sense and in the pointwise sense may not be equivalent in general.
\end{remark}

The above definitions can be informally interpreted as follows.
Without a loss of generality, we may assume $u^*(\bx_0) = \varphi(\bx_0)$ whenever $u^*-\varphi$ has 
a local maximum at $\bx_0$ or $u_*(\bx_0) = \varphi(\bx_0)$ whenever $u_*-\varphi$ has 
a local minimum at $\bx_0$. Then, $u$ is a viscosity solution of \eqref{PDE_wbc} if for all smooth 
functions $\varphi$ such that $\varphi$ ``touches" the graph of $u^*$ from above at $\bx_0$, 
we have $G_*(D^2\varphi(\bx_0), \nabla \varphi(\bx_0),\varphi(\bx_0),\bx_0) \leq 0$, 
and for all smooth functions $\varphi$ such that $\varphi$ ``touches" the graph of $u_*$ from 
below at $\bx_0$, we have $G^*(D^2\varphi(\bx_0), \nabla \varphi(\bx_0),\varphi(\bx_0),\bx_0) \geq 0$.  
A geometric interpretation of the definition of viscosity solutions can be found in 
Figure~\ref{viscosity_pic}.  
Since the definition is based on locally moving derivatives from the viscosity solution 
to a smooth test function, the concept of viscosity solutions relies upon a ``differentiation-by-parts" approach.  This is in contrast to weak solution theory where derivatives 
are moved to a test function using an ``integration-by-parts" approach, which, in general, 
is not a local operation.  

\begin{figure}[htb] 
\begin{center}
\includegraphics[width=0.43\textwidth]{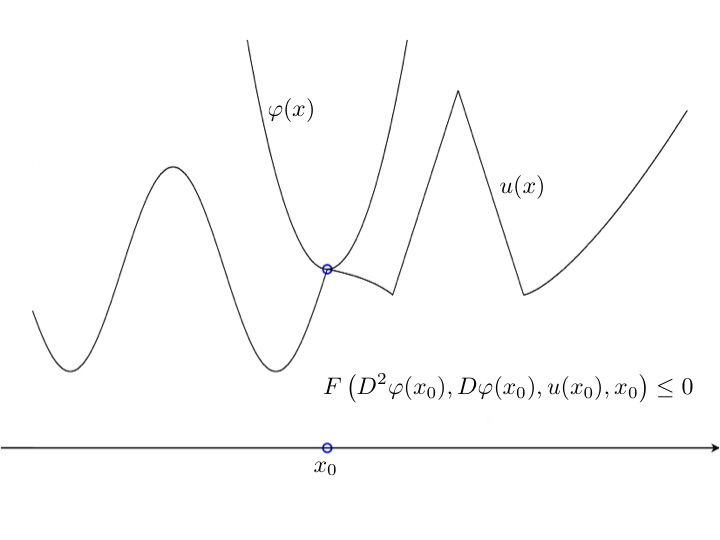}
\qquad 
\includegraphics[width=0.43\textwidth]{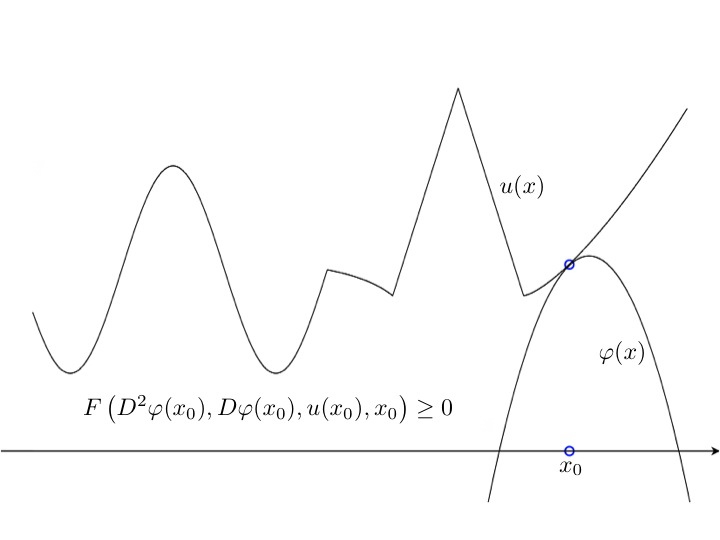}
\caption{
A geometric interpretation of viscosity solutions for second order problems. 
The figure on the left corresponds to a viscosity subsolution being ``touched" from above 
and the figure on the right corresponds to a viscosity supersolution being ``touched" from below. 
}
\label{viscosity_pic}
\end{center}
\end{figure}

\subsection{Ellipticity and the comparison principle} \label{elliptic_section}

An advantage of the viscosity solution concept for fully nonlinear problems is that, 
due to its entirely local structure, it is flexible enough to seek solutions in the space of bounded  functions. To ensure the existence and uniqueness of viscosity solutions,  additional structure 
conditions on the differential operator $G$ are often necessary. The most common such conditions 
are an ellipticity requirement and a comparison principle. The following definitions of ellipticity 
and the comparison principle are standard (cf. \cite{viscosity_guide}).

\begin{definition}\label{elliptic}
(i) Equation \eqref{PDE_wbc} is said to be {\em uniformly elliptic} if, for all 
$(\mathbf{p},v, \bx)\in \mathbb{R}^d \times \mathbb{R} \times \oOme$, there holds
\begin{align*}
\lambda \text{tr } (A-B) \leq 
G(B, \mathbf{p}, v, \bx) - G(A, \mathbf{p}, w, \bx) 
\leq \Lambda \text{tr } (A-B) 
\end{align*}
whenever $A \geq B$, 
where $A, B \in \mathcal{S}^{d \times d}$ 
and $A\geq B$ means that $A-B$ is a nonnegative definite matrix.

(ii) Equation \eqref{PDE_wbc} is said to be {\em proper elliptic} if, for all 
$(\mathbf{p},\bx)\in \mathbb{R}^d \times \oOme$, there holds
\begin{align*}
G(A, \mathbf{p}, v, \bx) \leq G(B, \mathbf{p}, w, \bx) 
\end{align*}
whenever $v \leq w$ and $A \geq B$, 
where $v,w \in \mathbb{R}$ and $A, B \in \mathcal{S}^{d \times d}$.  

(iii) Equation \eqref{PDE_wbc} is said to be {\em degenerate elliptic} if, for all 
$(\mathbf{p},v, \bx)\in \mathbb{R}^d \times \mathbb{R} \times \oOme$, there holds
\begin{align*}
G(A, \mathbf{p}, v, \bx) \leq G(B, \mathbf{p}, v, \bx)  
\end{align*}
whenever $A \geq B$, 
where $A, B \in \mathcal{S}^{d \times d}$. 
\end{definition}

\begin{definition}\label{comparison}
Problem \eqref{PDE_wbc} is said to satisfy a {\em comparison 
principle} if the following statement holds. For any upper semicontinuous 
function $u$ and lower semicontinuous function $v$ on $\overline{\Omega}$,  
if $u$ is a viscosity subsolution and $v$ is a viscosity supersolution 
of \eqref{PDE_wbc}, then $u\leq v$ on $\overline{\Omega}$.
\end{definition}
 
The comparison principle is sometimes called a {\em strong uniqueness property} 
due to the fact it immediately yields the uniqueness of viscosity solutions 
for problem \eqref{PDE_wbc} (cf. \cite{Barles_Souganidis91}).  
When $G$ is differentiable, the degenerate ellipticity condition 
can also be defined by requiring that the matrix $\frac{\partial G}{\partial D^2 u}$ 
is negative semi-definite  while the proper ellipticity condition additionally requires 
the number $\frac{\partial G}{\partial u}$ is nonnegative (cf. \cite[p. 441]{Gilbarg_Trudinger01}).
We then clearly see that the ellipticity concept for nonlinear problems  
generalizes the notion for linear elliptic equations.  

\begin{remark}
Since we have assumed that $G$ satisfies the comparison principle, 
it follows that that problem \eqref{PDE_wbc} has a unique viscosity solution $u \in C(\overline{\Omega})$.  
\end{remark}

\subsection{Hamilton-Jacobi-Bellman equations} \label{hjb-sec}

As mentioned in Section \ref{intro},  a specific class of fully nonlinear PDEs that fits the structure 
requirements in this paper are (stationary) Hamilton-Jacobi-Bellman (HJB) equations described by \eqref{HJB}.
  
We shall assume that $A^\theta$ is uniformly (in $\theta$) negative definite in the sense 
that there exists constants $0 < \lambda \leq \Lambda$ such that 
\begin{equation}\label{ellipticity}
	- \Lambda | \xi |^2 \leq A^{\theta} (\bx) \, \xi \cdot \xi 	\leq - \lambda | \xi |^2 \qquad
\forall \xi \in \mathbb{R}^d, \bx \in \Omega, \theta \in \Theta.
\end{equation}   
We also assume there exists a constant $K>0$ such that 
\[
	\| A^\theta\|_{L^\infty(\Omega)} , \, \| b^\theta\|_{L^\infty(\Omega)}, \,
	\|c^\theta\|_{L^\infty(\Omega)}  \leq K
\]
and $c^\theta (\bx) \geq k_0 > 0$ for some constant $k_0$ 
for all $\bx \in \Omega$ and $\theta \in \Theta$.    
Thus, the equation is proper elliptic and satisfies the comparison principle.  
We note that for stochastic optimal control applications, we have $c^\theta \equiv 0$
(cf. \cite{Fleming_Soner06}).  
This case will also be considered.  
However, such problems may not satisfy a comparison principle as 
discussed in \cite{NNZ17} and \cite{Safonov99}.  

Under the above assumptions on the coefficients and the control set, problem 
\eqref{FD_problem}--\eqref{HJB} has a unique solution 
and $H$ is globally Lipschitz continuous. 
Thus, $H$ is differentiable almost everywhere and the weak derivatives are bounded by 
the constant $K$.  
Since $\Theta$ is closed and bounded, there actually exists a function $\theta^*(x) \in \Theta$ 
such that 
\[
	H[u] = \min_{\theta \in \Theta} \bigl( L_\theta u - f_\theta \bigr) 
	= A^{\theta^*} : D^2 u + b^{\theta^*} \cdot \nabla u + c^{\theta^*} \, u 
	= 0 \qquad \text{in } \Omega . 
\]
Thus, $\frac{\partial H}{\partial D^2 u} = A^{\theta^*}$, $\frac{\partial H}{\partial \nabla u} = b^{\theta^*}$, 
and $\frac{\partial H}{\partial u} = c^{\theta^*}$, and we have 
$H$ is differentiable everywhere with bounded first order derivatives.  
 
The analysis assumes a global Lipschitz condition for ease of presentation; however, 
locally Lipschitz should be sufficient.    
The global Lipschitz assumption excludes Monge-Amp\`ere-type equations
which are only conditionally degenerate elliptic \cite{Caffarelli_Cabre95} and locally Lipschitz. 
In light of the recent work \cite{Feng_Jensen}, we know that the Monge-Amp\`ere equation 
has an equivalent HJB reformulation.  
Much of the work of this paper can be extended to the Monge-Amp\`ere equation via its HJB reformulation.

\subsection{Difference operators} \label{diff_quotients_sec}

We introduce several difference operators for approximating first and second order 
partial derivatives. The multiple difference operators will be used to help resolve 
the low regularity of viscosity solutions and will play a key role in motivating and defining 
our new narrow-stencil FD methods.  

Assume $\Omega$ is a $d$-rectangle, i.e., 
$\Omega = \left( a_1 , b_1 \right) \times \left( a_2 , b_2 \right) \times \cdots \times 
		\left( a_d , b_d \right)$.    
We shall only consider grids that are uniform in each coordinate $x_i$, $i = 1, 2, \ldots, d$.  
Let $J_i (\geq 2)$ be an integer and $h_i = \frac{b_i-a_i}{J_i-1}$ for $i = 1, 2, \ldots, d$. 
Define $\mathbf{h} = \left( h_1, h_2, \ldots, h_d \right) \in \mathbb{R}^d$, 
$h = \max_{i=1,2,\ldots,d} h_i$,  $J = \prod_{i=1}^d J_i$, and   
$\NJ = \{ \alpha = (\alpha_1, \alpha_2, \ldots, \alpha_d) 
\mid 1 \leq \alpha_i \leq J_i, i = 1, 2, \ldots, d \}$.  Then, $\left| \NJ \right| = J$. 
We partition $\Omega$ into $\prod_{i=1}^d \left(J_i-1 \right)$ sub-$d$-rectangles with grid points
$\mathbf{x}_{\alpha} = \bigl (a_1+ (\alpha_1-1)h_1 , a_2 + (\alpha_2-1) h_2 , \ldots , 
		a_d + (\alpha_d-1) h_d \bigr)$
for each multi-index $\alpha \in \NJ$.
We call $\cT_{\mathbf{h}}=\{\mathbf{x}_{\alpha} \}_{\alpha \in \NJ}$ 
a mesh (set of nodes) for $\overline{\Omega}$. 
We also introduce an extended mesh 
$\cT_{\mathbf{h}}'$ which extends  
$\cT_{\mathbf{h}}$ by a collection of ghost grid points that are at most one layer exterior to 
$\overline{\Omega}$ in each coordinate direction. 
In particular, we choose ghost grid points $\bx$ such that 
$\bx = \by \pm 2 h_i \mathbf{e}_i$ for some $\by \in \cT_{\bh} \cap \Omega$, $i \in \{1,2,\ldots,d\}$.  
We set $J_i' = J_i+2$ and $\NJ'$ is defined by replacing $J_i$ by $J_i'$ in 
the definition of $\NJ$ and then removing the extra multi-indices that would correspond to ghost grid points 
that are not in the set $\cT_{\bh}'$ to ensure $| \NJ' | = | \cT_{\bh}' |$.  


\subsubsection{First order difference operators} \label{discrete_gradients_sec}
 
We define the standard forward and backward difference operators as well as the central 
difference operator for approximating first order derivatives including the gradient operator. 
The forward and backward difference operators will serve as the building blocks for constructing
all of the first and second order difference operators in this paper.
 
Let $\left\{ \mathbf{e}_i \right\}_{i=1}^d$ denote the canonical basis vectors for $\mathbb{R}^d$. 
Define the (first order) forward and backward difference operators by
\begin{equation} \label{fd_x}
	\del_{x_i,h_i}^+ v(\mathbf{x})\equiv \frac{v(\mathbf{x} + h_i \mathbf{e}_i) - v(\mathbf{x})}{h_i},\qquad
	\del_{x_i,h_i}^- v(\mathbf{x})\equiv \frac{v(\mathbf{x})- v(\mathbf{x}-h_i \mathbf{e}_i)}{h_i}
\end{equation}
for a function $v$ defined on $\mathbb{R}^d$ and 
\[
	\del_{x_i,h_i}^+ V_\alpha \equiv \frac{V_{\alpha + \mathbf{e}_i} - V_\alpha}{h_i} , \qquad
	\del_{x_i,h_i}^- V_\alpha \equiv \frac{V_\alpha- V_{\alpha - \mathbf{e}_i}}{h_i}
\]
for a grid function $V$ defined on the grid $\cT_h$.  Note that ``ghost-values" may need to be 
introduced in order for the above difference operators to be well-defined on the boundary of $\Omega$.
We also define the following central difference operator: 
\begin{equation} \label{fd_xc}
	\delta_{x_i, h_i}  \equiv \frac{1}{2} \left( \delta_{x_i, h_i}^+ + \delta_{x_i, h_i}^- \right) 
\qquad\mbox{for } i=1,2,\cdots, d.
\end{equation} 
Lastly, we define the ``sided" and central gradient operators $\nabla_{\mathbf{h}}^+$, 
$\nabla_{\mathbf{h}}^-$, and $\nabla_{\mathbf{h}}$ by 
\begin{align} \label{discrete_grad_def}
	\nabla_{\mathbf{h}}^\pm \equiv \bigl[ \delta^\pm_{x_1, h_1} , \delta_{x_2, h_2}^\pm , \cdots , 
		\delta_{x_d, h_d}^\pm \bigr]^T, \quad   
	\nabla_{\mathbf{h}} \equiv \bigl[ \delta_{x_1, h_1} , \delta_{x_2, h_2} , \cdots , 
		\delta_{x_d, h_d} \bigr]^T. 
\end{align}


\subsubsection{Second order difference operators} \label{FD_hessian_sec}
We now introduce a number of difference operators that approximate second 
order derivatives including the Hessian operator. Using the forward and backward 
difference operators introduced in the previous subsection, we have the following
four possible approximations of the second order differential operator
$\partial^2_{x_ix_j}$:
 
\[
D_{\mathbf{h},ij}^{\mu\nu}\equiv \delta_{x_j, h_j}^\nu \delta_{x_i, h_i}^\mu \qquad
\mbox{for } \mu,\nu\in \{+,-\},
\]
which in turn leads to the definition of the following four approximations
of the Hessian operator $D^2\equiv [\partial^2_{x_ix_j}]$: 
\begin{equation}\label{discrete_Hessian}
D_{\mathbf{h}}^{\mu\nu} \equiv \bigl[D_{\mathbf{h},ij}^{\mu\nu}\bigr]_{i,j=1}^d  \qquad
\mbox{for } \mu,\nu\in \{+,-\}.
\end{equation}

To construct our numerical methods in the next section, we also need to introduce the
following three sets of averaged second order difference operators:
\begin{subequations}\label{fdxy}
\begin{align}
	\widehat{\delta}_{x_i, x_j; h_i, h_j}^2 
	& \equiv \frac12 \bigl( D_{\mathbf{h},ij}^{+-} + D_{\mathbf{h},ij}^{-+} \bigr)
          =\frac12 \bigl(\delta_{x_j, h_j}^- \delta_{x_i, h_i}^+  
		+ \delta_{x_j, h_j}^+ \delta_{x_i, h_i}^- \bigr) , \\ 
	\widetilde{\delta}_{x_i, x_j; h_i, h_j}^2 
	& \equiv \frac12 \bigl( D_{\mathbf{h},ij}^{--} + D_{\mathbf{h},ij}^{++} \bigr)
           = \frac12 \bigl(\delta_{x_j, h_j}^+ \delta_{x_i, h_i}^+  
		+ \delta_{x_j, h_j}^-\delta_{x_i, h_i}^-  \bigr) , \\ 
	\overline{\delta}_{x_i, x_j; h_i, h_j}^2 
	& \equiv \frac12 \bigl( \widehat{\delta}_{x_i, x_j; h_i, h_j}^2 
                + \widetilde{\delta}_{x_i, x_j; h_i, h_j}^2  \bigr) 
                = \frac14 \bigl( D_{\mathbf{h},ij}^{++} + D_{\mathbf{h},ij}^{+-} + D_{\mathbf{h},ij}^{-+} 
                		+ D_{\mathbf{h},ij}^{--} \bigr) \\ 
         &=\frac14 \left(\delta_{x_j, h_j}^+ \delta_{x_i, h_i}^+  + \delta_{x_j, h_j}^- \delta_{x_i, h_i}^+  
		+ \delta_{x_j, h_j}^+ \delta_{x_i, h_i}^-  + \delta_{x_j, h_j}^- \delta_{x_i, h_i}^- \right)  
        \nonumber
\end{align}
\end{subequations}
for all $i,j = 1,2,\ldots, d$. 
For notation brievity, we also set
\[
\delta_{x_i,h_i}^2\equiv\widehat{\delta}_{x_i,h_i}^2\equiv\widehat{\delta}_{x_i, x_i; h_i, h_i}^2,\qquad
\widetilde{\delta}_{x_i,h_i}^2\equiv\widetilde{\delta}_{x_i, x_i; h_i, h_i}^2,\qquad
\overline{\delta}_{x_i,h_i}^2\equiv\overline{\delta}_{x_i, x_i; h_i, h_i}^2 . 
\]
Using the above difference operators, we define the following three ``centered" approximations 
of the Hessian operator $D^2\equiv [\partial^2_{x_ix_j}]$:
\begin{equation} \label{fd_hessc}
\widehat{D}_{\mathbf{h}}^2=\bigl[\widehat{\delta}_{x_i, x_j; h_i, h_j}^2 \bigr]_{i,j=1}^d,\quad
\widetilde{D}_{\mathbf{h}}^2=\bigl[\widetilde{\delta}_{x_i, x_j; h_i, h_j}^2 \bigr]_{i,j=1}^d,\quad
\overline{D}_{\mathbf{h}}^2=\bigl[\overline{\delta}_{x_i, x_j; h_i, h_j}^2 \bigr]_{i,j=1}^d.
\end{equation}
We will also consider the (standard) central approximation of the Hessian operator $D_{\bh}^2$ 
defined by 
\begin{equation} \label{standard_FD_Hessian}
	\left[ D_{\mathbf{h}}^2 \right]_{ij} \equiv \begin{cases}
	\delta_{x_i, h_i}^2 & \text{if } i = j , \\ 
	\overline{\delta}_{x_i, x_j; h_i, h_j}^2 , \qquad & \text{if } i \neq j . 
	\end{cases}
\end{equation}

We now record some facts that can be easily verified for the various second order operators.  
First, 
\[
2\widehat{D}_{\mathbf{h}}^2=D_\mathbf{h}^{+-} + D_\mathbf{h}^{-+}, \quad
2\widetilde{D}_{\mathbf{h}}^2=D_\mathbf{h}^{--} + D_\mathbf{h}^{++}, \quad
2\overline{D}_{\mathbf{h}}^2 = \widehat{D}_{\mathbf{h}}^2 + \widetilde{D}_{\mathbf{h}}^2.
\]
Second, a simple computation reveals 
\begin{subequations} \label{discrete_hess_explicit}
	{\small
\begin{align}
	\widehat{\delta}_{x_i, x_j; h_i, h_j}^2 v(\mathbf{x}) 
		& = - \frac{v(\mathbf{x}+h_i\mathbf{e}_i-h_j\mathbf{e}_j) - v(\mathbf{x}+h_i\mathbf{e}_i ) 
			- \big[ v(\mathbf{x}-h_j\mathbf{e}_j ) 	- v(\mathbf{x}) \big]}{2 h_i h_j} \\ 
		\nonumber & \quad 
			- \frac{v(\mathbf{x}-h_i\mathbf{e}_i+h_j\mathbf{e}_j) - v(\mathbf{x}-h_i\mathbf{e}_i ) 
			- \big[ v(\mathbf{x}+h_j\mathbf{e}_j) - v(\mathbf{x}) 
			\big]}{2 h_i h_j} , \\ 
	\widetilde{\delta}_{x_i, x_j; h_i, h_j}^2 v(\mathbf{x}) 
		& = \frac{v(\mathbf{x}+h_i\mathbf{e}_i+h_j\mathbf{e}_j) - v(\mathbf{x}+h_i\mathbf{e}_i )
			- \big[ v(\mathbf{x}+h_j\mathbf{e}_j ) - v(\mathbf{x}) \big]}{2 h_i h_j} \\ 
		\nonumber & \quad + \frac{ v(\mathbf{x}-h_i\mathbf{e}_i-h_j\mathbf{e}_j) 
			- v(\mathbf{x}-h_i\mathbf{e}_i ) 
			- \big[ v(\mathbf{x}-h_j\mathbf{e}_j ) - v(\mathbf{x}) \big] }{2 h_i h_j} ,\\
        \overline{\delta}_{x_i, x_j; h_i, h_j}^2 v(\mathbf{x}) 
        & = \frac{v(\mathbf{x}+h_i \mathbf{e}_i+h_j \mathbf{e}_j) 
                        + v(\mathbf{x}-h_i\mathbf{e}_i-h_j\mathbf{e}_j)}{4 h_i h_j} \\
                \nonumber & \quad
                        - \frac{v(\mathbf{x}-h_i \mathbf{e}_i+h_j \mathbf{e}_j)  
                        + v(\mathbf{x}+h_i\mathbf{e}_i-h_j\mathbf{e}_j)}{4 h_i h_j} . 
\end{align}
}
\end{subequations}
Lastly, there holds the relationships 
\begin{subequations}\label{hess_diagonal}
\begin{align}
	\delta_{x_i, h_i}^2 & = \delta_{x_i, h_i}^\pm \delta_{x_i, h_i}^\mp , \\ 
	\delta_{x_i, 2 h_i}^2 & = \overline{\delta}_{x_i, h_i}^2 = \delta_{x_i, h_i} \delta_{x_i, h_i} . 
\end{align}
\end{subequations}

We have $\delta_{x_i,h_i}^2\equiv \widehat{\delta}_{x_i,h_i}^2$ is the standard (second order) three-point 
central difference operator for approximating second order derivatives in one-dimension, 
$\overline{\delta}_{x_i, h_i}^2 = \delta_{x_i,2h_i}^2$ is a lower-resolution (second-order) three-point 
central difference operator for approximating second order derivatives in one-dimension, 
$\widetilde{\delta}_{x_i, h_i}^2$ 
is a (second order) five-point central difference operator 
for approximating second order derivatives in one-dimension, 
$\overline{\delta}_{x_i, x_j; h_i, h_j}^2$ is the standard (second order) central difference operator 
for approximating second order mixed derivatives, 
and $\widehat{\delta}_{x_i, x_j; h_i, h_j}^2$ and $\widetilde{\delta}_{x_i, x_j; h_i, h_j}^2$ 
are two alternative (second order) central difference operators 
for approximating second order mixed derivatives.  
Observe that in two dimensions, 
$\widehat{D}_{\mathbf{h}}^2$ corresponds to a 7-point stencil, 
$\overline{D}_{\mathbf{h}}^2$ and $D_{\bh}^2$ correspond to 9-point stencils, 
and $\widetilde{D}_{\mathbf{h}}^2$ corresponds to an 11-point stencil, 
as seen in Figure~\ref{central_hessians_pic}.  
The operators $\widehat{D}_{\mathbf{h}}^2$ and $D_{\bh}^2$ 
use nearest neighbors in the stencil (including diagonal directions) 
while $\overline{D}_{\mathbf{h}}^2$ and $\widetilde{D}_{\mathbf{h}}^2$ 
use neighbors two steps away in each Cartesian direction.  

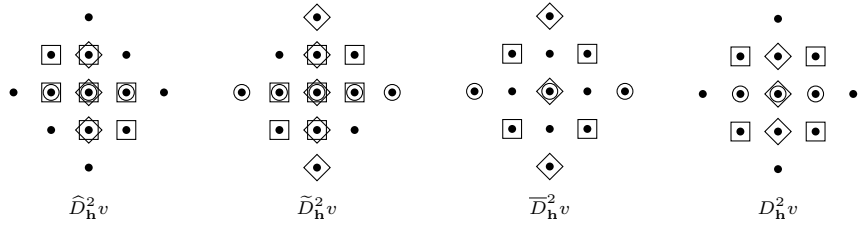
\begin{figure} 
\begin{center}
\begin{tikzpicture}[scale = 0.5]
\fill (-1,0) circle (3pt);
\fill (0,0) circle (3pt);
\fill (1,0) circle (3pt);
\fill (0,-1) circle (3pt);
\fill (0,1) circle (3pt);
\fill (-1,1) circle (3pt);
\fill (-1,-1) circle (3pt);
\fill (1,-1) circle (3pt);
\fill (1,1) circle (3pt);
\fill (0,2) circle (3pt);
\fill (0,-2) circle (3pt);
\fill (2,0) circle (3pt);
\fill (-2,0) circle (3pt);
\draw (-1.25, 1.25) -- (-0.75, 1.25) -- (-0.75, 0.75) -- (-1.25, 0.75) -- cycle;
\draw (1.25, -1.25) -- (0.75, -1.25) -- (0.75, -0.75) -- (1.25, -0.75) -- cycle; 
\draw (-0.25, 0.25) -- (0.25, 0.25) -- (0.25, -0.25) -- (-0.25, -0.25) -- cycle;
\draw (-1.25, 0.25) -- (-0.75, 0.25) -- (-0.75, -0.25) -- (-1.25, -0.25) -- cycle;
\draw (1.25, 0.25) -- (0.75, 0.25) -- (0.75, -0.25) -- (1.25, -0.25) -- cycle;
\draw (0.25, -1.25) -- (0.25, -0.75) -- (-0.25, -0.75) -- (-0.25, -1.25) -- cycle;
\draw (0.25, 1.25) -- (0.25, 0.75) -- (-0.25, 0.75) -- (-0.25, 1.25) -- cycle;
\draw (0,0) circle (6pt);
\draw (1,0) circle (6pt);
\draw (-1,0) circle (6pt);
\draw (0,-1.35) -- (0.35,-1) -- (0,-0.65) -- (-0.35,-1) -- cycle;
\draw (0,-0.35) -- (0.35,0) -- (0,0.35) -- (-0.35,0) -- cycle;
\draw (0,1.35) -- (0.35,1) -- (0,0.65) -- (-0.35,1) -- cycle;
\node[below] at (0,-2.5) {\scriptsize $\widehat{D}_{\mathbf{h}}^2 v$};
\end{tikzpicture}
\hspace{0.25in}
\begin{tikzpicture}[scale = 0.5]
\fill (-1,0) circle (3pt);
\fill (0,0) circle (3pt);
\fill (1,0) circle (3pt);
\fill (0,-1) circle (3pt);
\fill (0,1) circle (3pt);
\fill (-1,1) circle (3pt);
\fill (-1,-1) circle (3pt);
\fill (1,-1) circle (3pt);
\fill (1,1) circle (3pt);
\fill (0,2) circle (3pt);
\fill (0,-2) circle (3pt);
\fill (2,0) circle (3pt);
\fill (-2,0) circle (3pt);
\draw (0.75, 0.75) -- (1.25, 0.75) -- (1.25, 1.25) -- (0.75, 1.25) -- cycle;
\draw (-0.75, -0.75) -- (-1.25, -0.75) -- (-1.25, -1.25) -- (-0.75, -1.25) -- cycle;
\draw (-0.25, 0.25) -- (0.25, 0.25) -- (0.25, -0.25) -- (-0.25, -0.25) -- cycle;
\draw (-1.25, 0.25) -- (-0.75, 0.25) -- (-0.75, -0.25) -- (-1.25, -0.25) -- cycle;
\draw (1.25, 0.25) -- (0.75, 0.25) -- (0.75, -0.25) -- (1.25, -0.25) -- cycle;
\draw (0.25, -1.25) -- (0.25, -0.75) -- (-0.25, -0.75) -- (-0.25, -1.25) -- cycle;
\draw (0.25, 1.25) -- (0.25, 0.75) -- (-0.25, 0.75) -- (-0.25, 1.25) -- cycle;
\draw (0,0) circle (6pt); 
\draw (1,0) circle (6pt);
\draw (-1,0) circle (6pt);
\draw (2,0) circle (6pt);
\draw (-2,0) circle (6pt);
\draw (0,-2.35) -- (0.35,-2) -- (0,-1.65) -- (-0.35,-2) -- cycle;
\draw (0,-0.35) -- (0.35,0) -- (0,0.35) -- (-0.35,0) -- cycle;
\draw (0,2.35) -- (0.35,2) -- (0,1.65) -- (-0.35,2) -- cycle;
\draw (0,-1.35) -- (0.35,-1) -- (0,-0.65) -- (-0.35,-1) -- cycle;
\draw (0,1.35) -- (0.35,1) -- (0,0.65) -- (-0.35,1) -- cycle;
\node[below] at (0,-2.5) {\scriptsize $\widetilde{D}_{\mathbf{h}}^2 v$};
\end{tikzpicture}
\hspace{0.25in}
\begin{tikzpicture}[scale = 0.5]
\fill (-1,0) circle (3pt);
\fill (0,0) circle (3pt);
\fill (1,0) circle (3pt);
\fill (0,-1) circle (3pt);
\fill (0,1) circle (3pt);
\fill (-1,1) circle (3pt);
\fill (-1,-1) circle (3pt);
\fill (1,-1) circle (3pt);
\fill (1,1) circle (3pt);
\fill (0,2) circle (3pt);
\fill (0,-2) circle (3pt);
\fill (2,0) circle (3pt);
\fill (-2,0) circle (3pt);
\draw (-1.25, 1.25) -- (-0.75, 1.25) -- (-0.75, 0.75) -- (-1.25, 0.75) -- cycle;
\draw (1.25, -1.25) -- (0.75, -1.25) -- (0.75, -0.75) -- (1.25, -0.75) -- cycle;
\draw (0.75, 0.75) -- (1.25, 0.75) -- (1.25, 1.25) -- (0.75, 1.25) -- cycle; 
\draw (-0.75, -0.75) -- (-1.25, -0.75) -- (-1.25, -1.25) -- (-0.75, -1.25) -- cycle; 
\draw (0,0) circle (6pt);
\draw (2,0) circle (6pt);
\draw (-2,0) circle (6pt);
\draw (0,-2.35) -- (0.35,-2) -- (0,-1.65) -- (-0.35,-2) -- cycle;
\draw (0,-0.35) -- (0.35,0) -- (0,0.35) -- (-0.35,0) -- cycle;
\draw (0,2.35) -- (0.35,2) -- (0,1.65) -- (-0.35,2) -- cycle;
\node[below] at (0,-2.5) {\scriptsize $\overline{D}_{\mathbf{h}}^2 v$};
\end{tikzpicture}
\hspace{0.25in}
\begin{tikzpicture}[scale = 0.5]
\fill (-1,0) circle (3pt);
\fill (0,0) circle (3pt);
\fill (1,0) circle (3pt);
\fill (0,-1) circle (3pt);
\fill (0,1) circle (3pt);
\fill (-1,1) circle (3pt);
\fill (-1,-1) circle (3pt);
\fill (1,-1) circle (3pt);
\fill (1,1) circle (3pt);
\fill (0,2) circle (3pt);
\fill (0,-2) circle (3pt);
\fill (2,0) circle (3pt);
\fill (-2,0) circle (3pt);
\draw (-1.25, 1.25) -- (-0.75, 1.25) -- (-0.75, 0.75) -- (-1.25, 0.75) -- cycle;
\draw (1.25, -1.25) -- (0.75, -1.25) -- (0.75, -0.75) -- (1.25, -0.75) -- cycle;
\draw (0.75, 0.75) -- (1.25, 0.75) -- (1.25, 1.25) -- (0.75, 1.25) -- cycle; 
\draw (-0.75, -0.75) -- (-1.25, -0.75) -- (-1.25, -1.25) -- (-0.75, -1.25) -- cycle; 
\draw (0,0) circle (6pt);
\draw (1,0) circle (6pt);
\draw (-1,0) circle (6pt);
\draw (0,-1.35) -- (0.35,-1) -- (0,-0.65) -- (-0.35,-1) -- cycle;
\draw (0,-0.35) -- (0.35,0) -- (0,0.35) -- (-0.35,0) -- cycle;
\draw (0,1.35) -- (0.35,1) -- (0,0.65) -- (-0.35,1) -- cycle;
\node[below] at (0,-2.5) {\scriptsize $D_{\mathbf{h}}^2 v$};
\end{tikzpicture}
\end{center}
\caption{
Local stencils for the discrete central Hessian operators  
$\widehat{D}_{\mathbf{h}}^2 v$, $\widetilde{D}_{\mathbf{h}}^2 v$, 
$\overline{D}_{\mathbf{h}}^2 v$, 
and $D_{\bh}^2 v$ in two dimensions.  
Circled nodes correspond to a discrete approximation of $v_{xx}$.  
Boxed nodes correspond to a discrete approximation of $v_{xy}$.
Nodes with a diamond correspond to a discrete approximation of $v_{yy}$.  
}
\label{central_hessians_pic}
\end{figure}

We lastly introduce the (diagonal) variables 
$\xi_i^j = \xi_j^i$ and $\eta_i^j = \eta_j^i$ so that we can define the central difference operators 
$\delta_{\xi_i^j, \mathbf{h}}^2$ and $\delta_{\eta_i^j, \mathbf{h}}^2$ by 
\begin{subequations} \label{diag_laplacians}
\begin{align}
	\delta_{\xi_i^j, \mathbf{h}}^2 v(\mathbf{x}) 
	& \equiv \frac{v(\mathbf{x}-h_i \mathbf{e}_i + h_j \mathbf{e}_j) - 2 v(\mathbf{x}) 
		+ v(\mathbf{x}+h_i \mathbf{e}_i - h_j \mathbf{e}_j)}{h_i^2 + h_j^2} , \\ 
	\delta_{\eta_i^j, \mathbf{h}}^2 v(\mathbf{x}) 
	& \equiv \frac{v(\mathbf{x}+h_i \mathbf{e}_i + h_j \mathbf{e}_j) - 2 v(\mathbf{x}) 
		+ v(\mathbf{x}-h_i \mathbf{e}_i - h_j \mathbf{e}_j)}{h_i^2 + h_j^2}
\end{align}
\end{subequations}
for all $i,j = 1,2,\ldots,d$ with $i \neq j$. Then, a direct computation yields
\begin{subequations} \label{hessian_diag}
\begin{align}
	\label{hessian_diaga} 
	\bigl[ \overline{D}_{\mathbf{h}}^2 v \bigr]_{ij} 
	& = \bigl[ D_{\mathbf{h}}^2 v \bigr]_{ij} 
	   = \frac{h_i^2+h_j^2}{4 h_i h_j} \left( \delta_{\eta_i^j,\mathbf{h}}^2 v - 
		\delta_{\xi_i^j,\mathbf{h}}^2 v \right) , \\ 
	\bigl[ \widehat{D}_{\mathbf{h}}^2 v \bigr]_{ij} 
	& = \frac{h_i}{2 h_j} \delta_{x_i,h_i}^2 v + \frac{h_j}{2 h_i} \delta_{x_j,h_j}^2 v 
		- \frac{h_i^2 + h_j^2}{2 h_i h_j} \delta_{\xi_i^j,\mathbf{h}}^2 v , \\ 
	\bigl[ \widetilde{D}_{\mathbf{h}}^2 v \bigr]_{ij} 
	& = - \frac{h_i}{2 h_j} \delta_{x_i,h_i}^2 v - \frac{h_j}{2 h_i} \delta_{x_j,h_j}^2 v 
		+ \frac{h_i^2 + h_j^2}{2 h_i h_j} \delta_{\eta_i^j,\mathbf{h}}^2 v ,  
\end{align}
\end{subequations}
relationships that are illustrated in Figure~\ref{hessian_diag_pic}.   
Lastly, there holds  
\begin{align} \label{moment_ii_2h}
	\bigl[ \widetilde{D}_{\mathbf{h}}^2 V_\alpha \bigr]_{ii} - \bigl[ \widehat{D}_{\mathbf{h}}^2 V_\alpha \bigr]_{ii} 
	& = \widetilde{\delta}_{x_i, h_i}^2 V_\alpha - \delta_{x_i, h_i}^2 V_\alpha \\ 
	\nonumber & = 2 \left( \delta_{x_i, 2 h_i}^2 V_\alpha - \delta_{x_i, h_i}^2 V_\alpha \right) \\ 
	\nonumber & = \frac12 \left( \delta_{x_i, h_i}^2 V_{\alpha - \mathbf{e}_i} - 2 \delta_{x_i, h_i}^2 V_\alpha 
		+ \delta_{x_i, h_i}^2 V_{\alpha+\mathbf{e}_i} \right)
\end{align}
for all $i=1,2,\ldots,d$.  

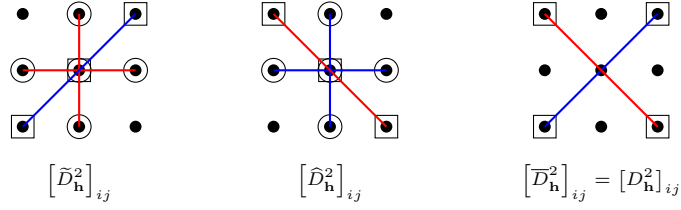
\begin{figure}
\begin{center}
\begin{tikzpicture}[scale = 0.75]
\fill (-1,0) circle (3pt);
\fill (0,0) circle (3pt);
\fill (1,0) circle (3pt);
\fill (0,-1) circle (3pt);
\fill (0,1) circle (3pt);
\fill (-1,1) circle (3pt);
\fill (-1,-1) circle (3pt);
\fill (1,-1) circle (3pt);
\fill (1,1) circle (3pt);
\draw (-0.2, 0.2) -- (0.2, 0.2) -- (0.2, -0.2) -- (-0.2, -0.2) -- cycle;
\draw (0.8, 0.8) -- (1.2, 0.8) -- (1.2, 1.2) -- (0.8, 1.2) -- cycle; 
\draw (-0.8, -0.8) -- (-1.2, -0.8) -- (-1.2, -1.2) -- (-0.8, -1.2) -- cycle; 
\draw (0,0) circle (6pt);
\draw (1,0) circle (6pt);
\draw (-1,0) circle (6pt);
\draw (0,1) circle (6pt);
\draw (0,-1) circle (6pt);
\draw[thick, red] (-1,0) -- (1,0);
\draw[thick, red] (0,-1) -- (0,1); 
\draw[thick, blue] (-1,-1) -- (1,1);
\node[below] at (0,-1.5) {\scriptsize
$\left[ \widetilde{D}_{\mathbf{h}}^2 \right]_{ij}$};
\end{tikzpicture}
\hspace{0.5in}
\begin{tikzpicture}[scale = 0.75]
\fill (-1,0) circle (3pt);
\fill (0,0) circle (3pt);
\fill (1,0) circle (3pt);
\fill (0,-1) circle (3pt);
\fill (0,1) circle (3pt);
\fill (-1,1) circle (3pt);
\fill (-1,-1) circle (3pt);
\fill (1,-1) circle (3pt);
\fill (1,1) circle (3pt);
\draw (-1.2, 1.2) -- (-0.8, 1.2) -- (-0.8, 0.8) -- (-1.2, 0.8) -- cycle;
\draw (-0.2, 0.2) -- (0.2, 0.2) -- (0.2, -0.2) -- (-0.2, -0.2) -- cycle;
\draw (1.2, -1.2) -- (0.8, -1.2) -- (0.8, -0.8) -- (1.2, -0.8) -- cycle;
\draw (0,0) circle (6pt);
\draw (1,0) circle (6pt);
\draw (-1,0) circle (6pt);
\draw (0,1) circle (6pt);
\draw (0,-1) circle (6pt);
\draw[thick, blue] (-1,0) -- (1,0);
\draw[thick, blue] (0,-1) -- (0,1); 
\draw[thick, red] (-1,1) -- (1,-1);
\node[below] at (0,-1.5) {\scriptsize
$\left[ \widehat{D}_{\mathbf{h}}^2 \right]_{ij}$};
\end{tikzpicture}
\hspace{0.5in}
\begin{tikzpicture}[scale = 0.75]
\fill (-1,0) circle (3pt);
\fill (0,0) circle (3pt);
\fill (1,0) circle (3pt);
\fill (0,-1) circle (3pt);
\fill (0,1) circle (3pt);
\fill (-1,1) circle (3pt);
\fill (-1,-1) circle (3pt);
\fill (1,-1) circle (3pt);
\fill (1,1) circle (3pt);
\draw (-1.2, 1.2) -- (-0.8, 1.2) -- (-0.8, 0.8) -- (-1.2, 0.8) -- cycle;
\draw (1.2, -1.2) -- (0.8, -1.2) -- (0.8, -0.8) -- (1.2, -0.8) -- cycle;
\draw (0.8, 0.8) -- (1.2, 0.8) -- (1.2, 1.2) -- (0.8, 1.2) -- cycle; 
\draw (-0.8, -0.8) -- (-1.2, -0.8) -- (-1.2, -1.2) -- (-0.8, -1.2) -- cycle; 
\draw[thick, blue] (-1,-1) -- (1,1);
\draw[thick, red] (-1,1) -- (1,-1);
\node[below] at (0,-1.5) {\scriptsize
$\left[ \overline{D}_{\mathbf{h}}^2 \right]_{ij} = \left[ D_{\mathbf{h}}^2 \right]_{ij}$};
\end{tikzpicture}
\end{center}
\caption{
Illustration of the relationship between the Cartesian directions and diagonal directions 
when defining various mixed second order partial derivative approximations.  
Blue corresponds to {\em added} three-point (non-mixed) second derivative approximations 
and red corresponds to {\em subtracted} three-point (non-mixed) second derivative approximations. 
}
\label{hessian_diag_pic}
\end{figure}

\section{A narrow-stencil finite difference method} \label{FD_method_sec}

We now propose a new finite difference method for approximating viscosity solutions 
of the fully nonlinear second order problem \eqref{FD_problem}.
In the construction of such a narrow-stencil scheme, our main idea is to design a numerical operator
that utilizes the various numerical Hessians defined in the previous section so that
a relaxed (or generalized) monotonicity condition is satisfied.  
The crux of the formulation is to introduce the concept of numerical moments  
which can be considered analogous to the concept of numerical viscosities used 
in the Crandall and Lions' finite difference framework for (first order) Hamilton-Jacobi equations. 
Below, we first define our narrow-stencil finite difference method, and we then describe the idea and details 
of numerical moments. 
We also prove a generalized monotonicity 
result for our proposed numerical operator.

\subsection{Formulation of the narrow-stencil finite difference method}

In Section \ref{FD_hessian_sec} we defined four basic sided discrete Hessian operators 
$D_\mathbf{h}^{\mu\nu}$ for $\mu,\nu\in \{+,-\}$.  
These four operators are the building blocks for 
constructing our numerical operator $\widehat{F}$ 
that approximates the differential operator $F$. 
However, inspired by our earlier 1-D work \cite{Feng_Kao_Lewis13}, 
we construct the numerical operator $\widehat{F}$ so that it only depends explicitly upon 
the central difference operators $\widehat{D}_\mathbf{h}^2$ and $\widetilde{D}_\mathbf{h}^2$ 
with regard to the Hessian argument in $F$. Recall that $\cT_{\mathbf{h}}'$ denotes the
extended mesh of $\overline{\Omega}$ that includes ghost mesh points and $\NJ'$ is the corresponding 
extended index set of $\NJ$
(see Section \ref{diff_quotients_sec}). 

Our specific narrow-stencil finite difference method is defined by seeking 
a grid function $U_\alpha: \NJ'\to \mathbb{R}$ such that for all $\alpha \in \NJ$
\begin{subequations}\label{FD_method}
\begin{alignat}{2} 
\hF[U_\alpha, \bx_\alpha] & =0  &&\qquad\mbox{for } \mathbf{x}_\alpha\in\mathcal{T}_{\mathbf{h}} \cap \Omega, \label{FD_method:1}\\
U_\alpha &= g(\mathbf{x}_\alpha)  &&\qquad\mbox{for } \mathbf{x}_\alpha\in\mathcal{T}_{\mathbf{h}}\cap  \partial\Omega , \label{FD_method:2}
\end{alignat}
\end{subequations}
where
\begin{align}\label{hatF}
\hF[U_\alpha, \bx_\alpha] &\equiv \widetilde{F}\bigl(D_{\mathbf{h}}^{--}U_\alpha, 
D_{\mathbf{h}}^{-+} U_\alpha, D_{\mathbf{h}}^{+-}U_\alpha, 
D_{\mathbf{h}}^{++} U_\alpha, 
\nabla_{\mathbf{h}}^+ U_\alpha,\nabla_{\mathbf{h}}^- U_\alpha,U_\alpha,\bx_\alpha \bigr) \\
\nonumber 
&\equiv \hF(\widehat{D}_{\mathbf{h}}^2 U_\alpha, \widetilde{D}_{\mathbf{h}}^2 U_\alpha, 
	\nabla_{\mathbf{h}}^+ U_\alpha,\nabla_{\mathbf{h}}^- U_\alpha,U_\alpha,\bx_\alpha) \\
      \nonumber  &\equiv F \left( \overline{D}_{\mathbf{h}}^2 U_\alpha , \nabla_{\mathbf{h}} U_\alpha , 
			U_\alpha , \bx_\alpha \right) 
	+ A(U_\alpha, \bx_\alpha) : 
		\bigl( \widetilde{D}_{\mathbf{h}}^2 U_\alpha - \widehat{D}_{\mathbf{h}}^2 U_\alpha \bigr)  \\ 
	\nonumber &\qquad 
	 	- \vec{\beta}(U_\alpha , \bx_\alpha) \cdot 
	 \bigl( \nabla_{\mathbf{h}}^+ U_\alpha - \nabla_{\mathbf{h}}^- U_\alpha \bigr), 
\end{align}
for $A : \mathbb{R}^J \times \cT_{\bh} \to \mathbb{R}^{d \times d}$ a matrix-valued function and  
$\vec{\beta} : \mathbb{R}^J \times \cT_{\bh} \to \mathbb{R}^d$ a vector-valued function. 
Recall that $2\overline{D}_{\mathbf{h}}^2 U_\alpha
=\widetilde{D}_{\mathbf{h}}^2 U_\alpha + \widehat{D}_{\mathbf{h}}^2 U_\alpha$ and
$2\nabla_{\mathbf{h}} U_\alpha = \nabla_{\mathbf{h}}^+ U_\alpha+\nabla_{\mathbf{h}}^- U_\alpha$.
Hence, $\overline{D}_{\mathbf{h}}^2 U_\alpha$ and $\nabla_{\mathbf{h}} U_\alpha$ are not regarded
as independent variables in the function $\hF$. 
We also note that the explicit dependence of $\hF$ on 
the $U_\alpha$ and $\bx_\alpha$ arguments is inherited from $F$.  
Suitable choices for $A$ and $\vec{\beta}$ will be specified in Section~\ref{monotoncity_sec} 
to guarantee well-posedness and convergence of the scheme. 

In order for the above scheme to be well defined, the issue of how to provide the ghost values 
must be addressed. 
This is because when $\mathbf{x}_\alpha$ is next to the 
boundary $\partial\Omega$, equation \eqref{FD_method:1} uses ghost grid points which are
outside of the domain $\Omega$ when evaluating $\widetilde{D}_{\bh}^2 U_\alpha$.  
Inspired by our 1-D work \cite{Feng_Kao_Lewis13}, we propose to use 
the following auxiliary equations, which can be viewed as a discretization of an additional boundary condition,  to define the required ghost values in terms of the boundary and interior grid values:
\begin{equation} \label{FD_auxiliaryBC}
	\Delta_{\mathbf{h}} U_\alpha \equiv \sum_{i=1}^d \delta_{x_i, h_i}^2 U_\alpha = 0 
\qquad \mbox{for } \mathbf{x}_\alpha\in \mathcal{S}_{\mathbf{h}} \subset \mathcal{T}_{\mathbf{h}} \cap \partial\Omega , 
\end{equation}
where 
\begin{align}\label{Sh_grid}
	\mathcal{S}_{\bh} \equiv \big\{ \bx_\alpha \in \cT_{\bh} \cap \partial \Omega \mid & \,  
		\bx_{\alpha} + h_i \mathbf{e}_i \in \cT_{\bh} \cap \Omega \text{ or } 
		\bx_{\alpha} - h_i \mathbf{e}_i \in \cT_{\bh} \cap \Omega \\ 
		\nonumber & 
		\text{ for some } i \in \{1,2,\ldots,d\} \big\} . 
\end{align}
We refer the reader to \cite{Feng_Kao_Lewis13,Lewis_dissertation,Feng_Glowinski_Neilan10} 
for detailed explanations of why such auxiliary equations are appropriate from the PDE point of view.  
See Figure~\ref{ghost_data_fig} for an illustration of how to implement the auxiliary boundary condition 
\eqref{FD_auxiliaryBC}.

We remark that the definition of scheme \eqref{FD_method}--\eqref{FD_auxiliaryBC}
does not depend on the particular structure of the differential operator $F$.  Hence, it is defined for 
all fully nonlinear second order PDEs including HJB equations and Monge-Amp\`ere-type equations. 
We also note that the numerical operator $\hF$ defined by \eqref{hatF} 
satisfies a {\it consistency} property which says that
$\hF [ U_\alpha , \bx_\alpha] = F(P,\mathbf{q} , U_\alpha , \bx_\alpha )$ provided  
$\widetilde{D}_{\bh}^2 U_\alpha = \widehat{D}_{\bh}^2 U_\alpha = P$ 
and $\nabla_{\bh}^+ U_\alpha = \nabla_{\bh}^- U_\alpha = \mathbf{q}$.  

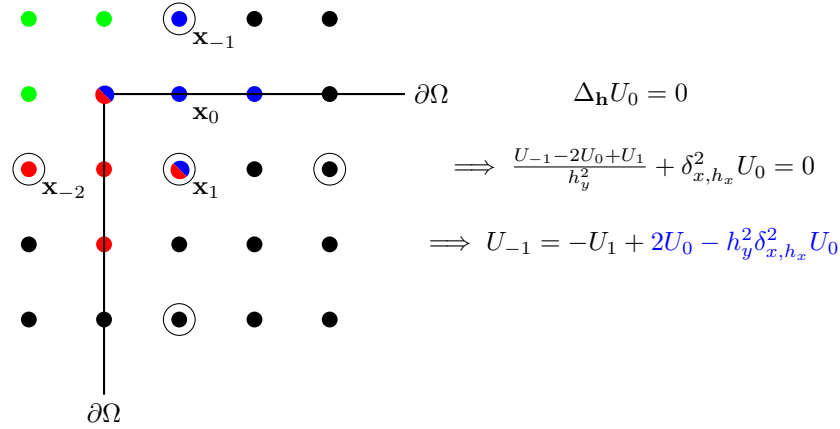
\begin{figure}[htb]
\begin{center}
\begin{tikzpicture}
\node[below] at (0,-4) {$\partial \Omega$};
\node[right] at (4,0) {$\partial \Omega$};
\fill[color=blue] (1,0) circle (3pt);
\fill[color=blue] (2,0) circle (3pt);
\fill (3,0) circle (3pt);
\fill[color=red] (0,-1) circle (3pt);
\fill[color=red] (0,-2) circle (3pt);
\fill (0,-3) circle (3pt);
\draw[thick] (0,-4) -- (0,0) -- (4,0);
\node[below right] at (1.05,-1.05) {$\mathbf{x}_1$};
\node[below right] at (1.05,-0.05) {$\mathbf{x}_0$};
\node[below right] at (1.05,0.95) {$\mathbf{x}_{-1}$};
\node[below right] at (-0.95,-1.05) {$\mathbf{x}_{-2}$};
\fill (1,-2) circle (3pt);
\fill (1,-3) circle (3pt);
\fill (2,-1) circle (3pt);
\fill (2,-2) circle (3pt);
\fill (2,-3) circle (3pt);
\fill (3,-1) circle (3pt);
\fill (3,-2) circle (3pt);
\fill (3,-3) circle (3pt);
\fill[color=blue] (1,1) circle (3pt);
\fill (2,1) circle (3pt);
\fill (3,1) circle (3pt);
\fill[color=red] (-1,-1) circle (3pt);
\fill (-1,-2) circle (3pt);
\fill (-1,-3) circle (3pt);
\fill[color=green] (-1,0) circle (3pt);
\fill[color=green] (-1,1) circle (3pt);
\fill[color=green] (0,1) circle (3pt);
\draw (1,-1) circle (6pt);
\draw (1,-3) circle (6pt);
\draw (1,1) circle (6pt);
\draw (-1,-1) circle (6pt);
\draw (3,-1) circle (6pt); 
\fill[color=blue] (-0.5,0.5) -- (0.1,-0.1) arc(-45:135:0.125) -- cycle;
\fill[color=red] (0.5,-0.5) -- (-0.08,0.08) arc(135:315:0.125) -- cycle;
\fill[color=blue] (0.5,-0.5) -- (1.1,-1.1) arc(-45:135:0.125) -- cycle;
\fill[color=red] (1.5,-1.5) -- (0.92,-0.92) arc(135:315:0.125) -- cycle;

\node at (7,0) {$\Delta_{\mathbf{h}} U_0 = 0$};
\node at (7,-1) {$\implies \frac{U_{-1} - 2 U_0 + U_1}{h_y^2} + \delta_{x,h_x}^2 U_0 = 0$};
\node at (7,-2) {$\implies U_{-1} = - U_1 + {\color{blue} 2 U_0 - h_y^2 \delta_{x,h_x}^2 U_0}$};
\end{tikzpicture}
\end{center}
\caption{
An example in two dimensions of how to enforce the auxiliary boundary condition 
when building the matrix representation of $\delta_{x_i, 2 h_i}^2$.  
The circled nodes correspond to evaluating $\Delta_{2 \mathbf{h}} U_1$.  
The blue and red ghost nodes are defined using the auxiliary boundary condition 
$\Delta_{\mathbf{h}} U_\alpha = 0$ along $\partial \Omega$.  
The blue and red nodes along $\partial \Omega$ are treated as knowns.   
Thus, we can replace the ghost nodes with unknown nodes corresponding to 
grid points in the interior of the domain as well as known values coming from the 
Dirichlet boundary condition.  
Note that the green nodes are ghost nodes that are not needed since they do not appear 
in the calculation of $\hF[U]$.  
Accordingly, the corner mesh node is not included in the construction of the set $\mathcal{S}_{\bh}$ 
over which the auxiliary boundary condition is enforced.  
}
\label{ghost_data_fig}
\end{figure}

\subsection{Numerical moments}
 
Since the last two terms in \eqref{hatF} where the numerical operator $\hF$ is defined
are special, we now take a closer look at them and explain the insights behind their introduction. 
First, a direct calculation immediately yields 
\begin{equation} \label{viscosity_h}
\delta_{x_i, h_i}^+ U_\alpha -  \delta_{x_i, h_i}^- U_\alpha  
=  h_i \, \frac{U_{\alpha-h_i\mathbf{e}_i} - 2 U_\alpha + U_{\alpha+h_i\mathbf{e}_i}}{h_i^2}
=  h_i \, \delta_{x_i, h_i}^2 U_\alpha , 
\end{equation} 
which is a central difference approximation of $u_{x_i x_i}(\mathbf{x}_\alpha)$ scaled by $h_i$.
This is the very reason that this term is called a {\em numerical viscosity} in the literature, and 
it was used in \cite{Crandall_Lions84} to construct a Lax-Friedrichs 
monotone scheme for Hamilton-Jacobi equations.  

Second, also by a direct calculation, we get for $i,j \in \{1,2,\ldots,d \}$
{\small
\begin{align} \label{moment_factor}
\bigl[ \widetilde{D}_{\mathbf{h}}^2 V_\alpha \bigr]_{ij} 
- \bigl[ \widehat{D}_{\mathbf{h}}^2 V_\alpha \bigr]_{ij}  
&=\frac12\bigl( \delta_{x_i,h_i}^+ \delta_{x_j,h_j}^+ - \delta_{x_i,h_i}^+ \delta_{x_j,h_j}^- 
- \delta_{x_i,h_i}^- \delta_{x_j,h_j}^+ + \delta_{x_i,h_i}^- \delta_{x_j,h_j}^- \bigr) V_\alpha \\ 
\nonumber &= \frac{h_i h_j}2 \delta_{x_i, h_i}^2 \delta_{x_j, h_j}^2 V_\alpha 
= \frac{h_i h_j}2 \delta_{x_j, h_j}^2 \delta_{x_i, h_i}^2 V_\alpha , 
\end{align}
}
which is an $O( h_i^2 + h_j^2)$ approximation of $v_{x_i x_i x_j x_j}(\mathbf{x}_\alpha)$ 
scaled by $h_i h_j / 2$. Thus, 
\[
2 \mathbf{1}_{d \times d} : 
\left( \widetilde{D}_{\mathbf{h}}^2 - \widehat{D}_{\mathbf{h}}^2 \right) V_\alpha
\approx h^2 \Delta^2 v(\mathbf{x}_\alpha), 
\]
where $\mathbf{1}_{d \times d}$ denotes the $d \times d$ matrix with all entries equal to $1$.

Due to the above observation and motivated by its connection to the vanishing moment method of 
\cite{Feng_Neilan09a}, we introduce the following definition for the above expression.

\begin{definition} \label{moment_def} 
Let $A : \mathbb{R}^J \times \cT_{\mathbf{h}} \to \mathbb{R}^{d \times d}$ 
and $V$ be a given grid function.  
The discrete operator $M : \mathbb{R}^J \to \mathbb{R}$ defined by 
\[
M[V,\bx_\alpha] \equiv  A \bigl( V_\alpha , \mathbf{x}_\alpha \bigr) 
: \bigl( \widetilde{D}_{\mathbf{h}}^2 V_\alpha - \widehat{D}_{\mathbf{h}}^2 V_\alpha \bigr)
\] 
for all $\bx_\alpha \in \cT_{\bh} \cap \Omega$ 
is called a {\em numerical moment operator}.  
\end{definition}

Let $M_{ij}$ denote the $(i,j)$ component of the matrix difference 
$\bigl[ \widetilde{D}_{\mathbf{h}}^2 V_\alpha \bigr] - \bigl[ \widehat{D}_{\mathbf{h}}^2 V_\alpha \bigr]$ 
and let $A_i$ and $A_j$ denote the matrix representations of the difference quotients   
$- \delta_{x_i, h_i}^2 V_\alpha$ and $-\delta_{x_j, h_j}^2 V_\alpha$ with Dirichlet boundary conditions, respectively. 
Then, 
\[
	M_{ij} = \frac12 h_i h_j A_i A_j = \frac12 h_i h_j A_j A_i . 
\]
Since $A_i$ and $A_j$ are symmetric positive definite, it follows that $M_{ij}$ is symmetric positive definite.  
Thus, the numerical moment is a strictly positive operator.  
Similarly, by \eqref{viscosity_h}, we have the numerical viscosity is also a strictly positive operator.  

\subsection{Generalized monotonicity properties of the numerical operator $\hF$} \label{monotoncity_sec}

In the definition of $\hF$, no restriction or guideline is given for the choices of 
the matrix-valued function $A$ and the vector-valued function $\vec{\beta}$. 
Suggested by the admissibility and stability proofs to be given in subsequent sections,  
we consider a special family of the pairs $A$ and $\vec{\beta}$, that is, 
$A = \gamma \mathbf{1}_{d \times d}$ for some constant $\gamma > 0$ 
and $\vec{\beta} = \beta \vec{1}$ for some constant $\beta \geq 0$, 
where $\mathbf{1}_{d \times d}$ and $\vec{1}$ denote the matrix and the vector 
with all entries equal to one, respectively. 
Thus, the numerical operator is a linear perturbation of the nonlinear operator $F$. 
We note that the particular choice for $\gamma$ and $\beta$ depends on the Lipschitz constants of
the differential operator $F$ which we now explain. For the ease of presentation we 
only present the details for the case that $F$ is differentiable with respect to its first three arguments  
and make a comment  for the Lipschitz continuous case at the end. 

Since $F$ is proper elliptic and differentiable, there exist constants $K_{ii} \geq k_{ii} > 0$, 
$k_0 \geq 0$, 
and $K_{ij}, K_i, K^0 \geq 0$ such that 
\begin{alignat*}{2}
	& -K_{ii} \leq \frac{\partial F}{\partial u_{x_i x_i}} \leq - k_{ii}, \qquad 
		&& k_0 \leq \frac{\partial F}{\partial u} \leq K^0 , \\ 
	& \left| \frac{\partial F}{\partial u_{x_i x_j}} \right| \leq K_{ij} , \qquad 
		&& \left| \frac{\partial F}{\partial u_{x_i}} \right| \leq K_{i} 
\end{alignat*}
for all $i,j$.  
Let $k_{**} = \min \{ k_{ii} \}$, $K^{**} = \max \left\{ K_{ij} \right\}$, 
and $K^* = \max \left\{ K_i \right\}$.  

By the definition of $\overline{D}_{\mathbf{h}}^2 U_\alpha$ and $\nabla_{\mathbf{h}} U_\alpha$, 
we get 
\begin{subequations} \label{partials}
\begin{align}
	\frac{\partial \hF}{\partial \widetilde{D}_{\bh}^2 U_\alpha} 
	& = \frac12 \frac{\partial F}{D^2 u} + \gamma \mathbf{1}_{d \times d}
		\geq \left(\gamma - \frac12 K^{**} \right) \mathbf{1}_{d \times d} , \\ 
	\frac{\partial \hF}{\partial \widehat{D}_{\bh}^2 U_\alpha} 
	& = \frac12 \frac{\partial F}{D^2 u} - \gamma \mathbf{1}_{d \times d}
		\leq \left(\frac12 K^{**} - \gamma \right) \mathbf{1}_{d \times d} , \\ 
	\frac{\partial \hF}{\partial \nabla_{\bh}^+ U_\alpha} 
	& = \frac12 \frac{\partial F}{\partial \nabla u} - \beta \vec{1}
		 \leq \left( \frac12 K^* - \beta \right) \vec{1} , \\ 
	\frac{\partial \hF}{\partial \nabla_{\bh}^- U_\alpha} 
	& = \frac12 \frac{\partial F}{\partial \nabla u} + \beta \vec{1}
		\geq \left( \beta - \frac12 K^* \right) \vec{1} , \\ 
	\frac{\partial \hF}{\partial U_\alpha} 
	& = \frac{\partial F}{\partial u} 
		\geq k_0  , 
\end{align} 
\end{subequations}
where the inequalities hold component-wise.  
Clearly, for sufficiently large $\gamma$ and $\beta$, every partial derivative  
in \eqref{partials} has a fixed sign as reflected in the following lemma.  

\begin{lemma} \label{g-mon-lemma}
Assume that $F$ is proper elliptic and has bounded derivatives with respect to 
its first three arguments for all $x\in \Omega$.  Then for sufficiently 
large $\gamma > 0$ and $\beta \geq 0$, the numerical operator
$\hF$ is nonincreasing with respect to each component of $\widehat{D}_{\bh}^2 U_\alpha$ 
and $\nabla_{\bh}^+ U_\alpha$ and nondecreasing with respect to each component of 
$\widetilde{D}_{\bh}^2 U_\alpha$, $\nabla_{\bh}^- U_\alpha$, 
and $U_\alpha$. 
\end{lemma}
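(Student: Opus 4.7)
The plan is to treat this lemma as essentially a direct consequence of the partial derivative computations already displayed in \eqref{partials} above the statement, combined with a suitable choice of the stabilization constants $\gamma$ and $\beta$. Because the numerical operator $\hF$ is built as a linear perturbation of $F$ (with the moment and numerical-viscosity terms appearing linearly in the Hessian and gradient arguments), its partial derivatives with respect to each argument split cleanly into a contribution from $F$ and a contribution from the stabilization, and monotonicity in each variable will reduce to a componentwise sign check.

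First I would record that since $F$ is proper elliptic and differentiable in its first three arguments on $\Omega$, the constants $K_{ii}, k_{ii}, K_{ij}, K_i, k_0, K^0$ are finite and the bounds listed just above \eqref{partials} hold; in particular $k_{**}, K^{**}, K^*$ are well-defined nonnegative reals. Next, using $\overline{D}_{\bh}^2 U_\alpha = \tfrac12\bigl(\widetilde{D}_{\bh}^2 U_\alpha + \widehat{D}_{\bh}^2 U_\alpha\bigr)$ and $\nabla_{\bh} U_\alpha = \tfrac12\bigl(\nabla_{\bh}^+ U_\alpha + \nabla_{\bh}^- U_\alpha\bigr)$, I would apply the chain rule to the definition \eqref{hatF} of $\hF$ to confirm the identities \eqref{partials}, treating $\widetilde{D}_{\bh}^2 U_\alpha$, $\widehat{D}_{\bh}^2 U_\alpha$, $\nabla_{\bh}^+ U_\alpha$, $\nabla_{\bh}^- U_\alpha$, $U_\alpha$, $\bx_\alpha$ as independent arguments of $\hF$ as stated in the text.

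Then the main step is to choose the stabilization parameters: set
\[
\gamma > \tfrac12 K^{**}, \qquad \beta \geq \tfrac12 K^*.
\]
With these choices, each componentwise bound in \eqref{partials} gives the desired sign: $\partial \hF/\partial [\widetilde{D}_{\bh}^2 U_\alpha]_{ij} \geq 0$, $\partial \hF/\partial [\widehat{D}_{\bh}^2 U_\alpha]_{ij} \leq 0$, $\partial \hF/\partial [\nabla_{\bh}^- U_\alpha]_i \geq 0$, $\partial \hF/\partial [\nabla_{\bh}^+ U_\alpha]_i \leq 0$, and $\partial \hF/\partial U_\alpha \geq k_0 \geq 0$. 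Monotonicity in each argument then follows from the fundamental theorem of calculus applied along the line segment joining any two admissible values of that argument, holding the other arguments fixed.

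The only delicate point, and what I would flag as the main obstacle, is that the argument as stated uses classical differentiability of $F$, whereas the paper in fact only assumes $F$ is Lipschitz in its first three arguments (the HJB operator $H$ is not $C^1$ on the locus where the minimizer changes). I would handle this by noting that a globally Lipschitz $F$ is differentiable almost everywhere (Rademacher's theorem), and the essential suprema of the weak partial derivatives satisfy the same bounds $K^{**}, K^*, K^0$. Monotonicity in each variable can then be recovered from the equivalent Lipschitz-style one-sided difference estimate: for any admissible increments $\Delta$ in a single argument, the difference $\hF$-value is bounded below (or above) by the sum of the stabilization increment and the corresponding Lipschitz bound from $F$, so the same thresholds $\gamma > \tfrac12 K^{**}$ and $\beta \geq \tfrac12 K^*$ again force the correct sign. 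This yields the generalized monotonicity claimed, and in particular matches the HJB case in Section~\ref{hjb-sec} where $K^{**}$ and $K^*$ are bounded by the uniform bound $K$ on the coefficients $A^\theta, b^\theta$.
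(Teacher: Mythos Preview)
Your proposal is correct and matches the paper's own proof essentially line for line: the paper simply invokes the componentwise bounds in \eqref{partials} and observes that choosing $\gamma \geq \tfrac12 K^{**}$ and $\beta \geq \tfrac12 K^*$ forces each partial derivative to have the stated sign. Your additional discussion of the Lipschitz (non-differentiable) case is exactly the content of the remark immediately following the lemma in the paper.
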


\begin{proof}
The assertion holds by \eqref{partials} if $\gamma$ and $\beta$ are chosen such that 
$\gamma \geq \frac{K^{**}}{2}$ and $\beta \geq \frac{K^*}{2}$.  The proof is complete. 
\end{proof}

\begin{remark}
(a) It is easy to see that the conclusion of Lemma \ref{g-mon-lemma} still holds if
	$F$ is proper elliptic and Lipschitz continuous  with respect to 
	its first three arguments for all $x\in \Omega$. 
	This is because the inequalities 
	in \eqref{partials} still hold if all of the partial derivatives are replaced 
	by their respective difference quotients, which is sufficient to ensure the 
	monotonicity stated in the lemma.  

(b) It should be noted that the monotonicity proved above in Lemma \ref{g-mon-lemma} is nonstandard.  First, it
    holds only with respect to the various zero-, first-, and second-order difference operators. 
    Second, the monotonicity with respect to each matrix or vector argument holds component-wise;
    in other words, in a component-wise ordering for these matrix or vector arguments. 
    
(c) We shall refer to the monotonicity stated in Lemma \ref{g-mon-lemma} for $\hF$ 
     as {\em generalized monotonicity} conditions throughout the rest of the paper.  
     We also note that, for $h$ sufficiently small, we can choose $\beta = 0$ 
     when $k_{**} > 0$. i.e., $F$ is not degenerate elliptic.  

\end{remark}
  
\section{$\ell^2$-norm stability and well-posedness}\label{stability_admissibility_sec}

The goal of this section is to show that the proposed narrow stencil scheme 
as defined in Section~\ref{monotoncity_sec} with the choices of the parameters 
\begin{equation} \label{Abeta}
	A=\gamma \mathbf{1}_{d \times d}, \qquad \vec{\beta} = \beta \vec{1} 
\end{equation} 
has a unique solution for sufficiently large $\gamma, \beta \geq 0$ 
that is uniformly bounded in a weighted $\ell^2$ norm.
For transparency, we consider the second-order PDE problem 
\begin{subequations} \label{F_D2}
\begin{alignat}{2}
	F[u](\mathbf{x}) \equiv F \left( D^2 u, u , \mathbf{x} \right) 
	& = 0 &&\qquad\forall \mathbf{x} \in \Omega \subset \mathbb{R}^d, \\
	u(\mathbf{x}) & = g(\mathbf{x}) && \qquad\forall \mathbf{x} \in \partial \Omega.
\end{alignat} 
\end{subequations}
We assume that $F$ is degenerate elliptic (see Definition \ref{elliptic}), 
differentiable, and Lipschitz with respect to the first two arguments. 
Since $F$ is independent of 
$\nabla u$ in \eqref{F_D2}, we choose $\beta = 0$. 
We note that the results for this 
special case can be extended to the more general PDE \eqref{FD_problem}.

The idea for proving the well-posedness and stability of the method is to equivalently reformulate  the proposed scheme as a fixed point problem for a nonlinear mapping and to prove the mapping is 
contractive in the $\ell^2$-norm. The well-posedness of the scheme
 then follows from 
the Contractive Mapping Theorem and the weighted $\ell^2$-norm stability will be  
obtained as a consequence of the contractive property.
To this end, let $S(\cT_{\mathbf{h}}^\prime)$
denote the space of all grid functions on $\cT_{\mathbf{h}}^\prime$, and introduce the 
mapping $\cM_\rho : S(\cT_{\mathbf{h}}^\prime) \to S(\cT_{\mathbf{h}}^\prime)$ defined by 
\begin{equation}\label{M_rho}
	\widehat{U}
	\equiv \cM_\rho U, 
\end{equation}
where the grid function $\widehat{U}\in S(\cT_{\mathbf{h}}')$ is defined by 
\begin{subequations} \label{M_rho_matrix}
\begin{alignat}{2} \label{M_rho_interior} 
	 \widehat{U}_\alpha &= U_\alpha - \rho \hF \left[ U_\alpha , \bx_\alpha \right] , \qquad 
		&& \text{if } \bx_\alpha \in \cT_{\bh} \cap \Omega , \\ 
	 \widehat{U}_\alpha &= g(\bx_\alpha) , && \text{if } \bx_\alpha \in \cT_{\bh} \cap \partial \Omega , \\ 
	 \Delta_{\bh} \widehat{U}_\alpha &= 0 , && \text{if } \bx_\alpha \in \mathcal{S}_{\bh} \subset \cT_{\bh} \cap \partial \Omega 
		\label{M_rho_matrix_bc2}
\end{alignat}
\end{subequations} 
for $\rho > 0$ an underdetermined constant.  
Clearly, the iteration defined in \eqref{M_rho_matrix}
is the standard forward Euler method with pseudo time-step $\rho$. 
In the following, we will show that the mapping $\cM_\rho$ is a contraction 
with respect to the $\ell^2$ norm when 
$\gamma > 0$ is sufficiently large 
and $\rho > 0$ is sufficiently small. 
We note that this is in contrast to the corresponding analysis for monotone methods 
that shows the same fixed point iteration is a contraction with respect to the $\ell^\infty$ norm.  

A couple of remarks are needed here. 
First, the auxiliary boundary condition \eqref{M_rho_matrix_bc2} is used to extend 
$\widehat{U}$ to a layer of ghost points that are needed in the evaluation of $\hF$ in \eqref{M_rho_interior}.  
Note that the boundary conditions in \eqref{M_rho_matrix} are consistent with those in 
\eqref{FD_method}--\eqref{FD_auxiliaryBC}.  
Second, any fixed point of 
$\cM_\rho$ is a solution to the proposed finite difference scheme 
\eqref{FD_method}--\eqref{FD_auxiliaryBC} and vice versa.  

To show that the mapping $\cM_\rho$ has a unique fixed point in $S(\cT_{\mathbf{h}}^\prime)$, 
we first establish a lemma that specifies conditions under which $\cM_\rho$ is a contraction in $\ell^2$.  
The proof relies upon a couple of auxiliary linear algebra results that can be found in Section~\ref{appendix_sec}.  

\begin{lemma} \label{lemma_contraction} 
Suppose the operator $F$ in \eqref{F_D2} is proper elliptic, differentiable, and 
Lipschitz continuous with respect to its first two arguments.  
Choose $U,V\in S(\cT_{\mathbf{h}}')$, 
and let $\widehat{U} = \cM_\rho U$ and $\widehat{V} = \cM_\rho V$ 
for $\cM_\rho$ defined by \eqref{M_rho} and \eqref{M_rho_matrix} with 
$\gamma > K^{**}/2$ and $\beta = 0$. 
Then there holds 
\[
	\| \widehat{U} - \widehat{V} \|_{\ell^2(\cT_{\mathbf{h}})} 
	\leq (1 - \rho k_{**} c d - \rho k_0 d - \rho \gamma c h_*^2 ) \| U - V \|_{\ell^2(\cT_{\mathbf{h}})} 
\] 
for all $\rho > 0$ sufficiently small and some constant $c$ independent of $h$ and $h_* = \min_i \{ h_i \}$.  
\end{lemma}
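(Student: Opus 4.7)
I will linearize $\hF$ via the mean value theorem, recast the pseudo-time iteration applied to $W := U - V$ as $\widehat{W}_\alpha = W_\alpha - \rho (\mathcal{L} W)_\alpha$ at interior nodes (together with $\widehat{W}_\alpha = 0$ on $\cT_\bh\cap\partial\Omega$ and $\Delta_\bh \widehat{W}_\alpha = 0$ on $\mathcal{S}_\bh$), and then control the operator norm of $I - \rho \mathcal{L}$ in $\ell^2$ via the expansion
\[
\|\widehat{W}\|^2_{\ell^2(\cT_\bh)} = \|W\|^2_{\ell^2(\cT_\bh\cap\Omega)} - 2\rho\, \langle W, \mathcal{L} W\rangle_{\ell^2} + \rho^2 \|\mathcal{L} W\|_{\ell^2}^2.
\]
Since $F$ is differentiable and proper elliptic, the mean value theorem produces pointwise coefficients $c_\alpha \in [k_0, K^0]$ and $a_{ij,\alpha}$ with $a_{ii,\alpha} \in [-K_{ii}, -k_{ii}]$ and $|a_{ij,\alpha}| \leq K_{ij}$ for $i\neq j$ such that
\[
\hF[U_\alpha,\bx_\alpha] - \hF[V_\alpha,\bx_\alpha] = c_\alpha W_\alpha + \sum_{i,j} a_{ij,\alpha}\, [\overline{D}^2_\bh W_\alpha]_{ij} + \gamma\, \mathbf{1}_{d\times d} : \bigl(\widetilde{D}^2_\bh - \widehat{D}^2_\bh\bigr)W_\alpha =: (\mathcal{L} W)_\alpha.
\]

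\textbf{Coercivity of $\mathcal{L}$.} The crux is a lower bound on $\langle W, \mathcal{L} W\rangle_{\ell^2}$ obtained by discrete summation by parts, exploiting the vanishing of $\widehat{W}$ on $\cT_\bh \cap \partial\Omega$ together with the ghost-point relation \eqref{FD_auxiliaryBC}. Four positive contributions accrue: (i) the zero-order term yields $\sum_\alpha c_\alpha W_\alpha^2 h^d \geq k_0 \|W\|^2$; (ii) the diagonal Hessian terms, since $[\overline{D}^2_\bh]_{ii} = \delta_{x_i,h_i}\delta_{x_i,h_i}$, produce $\sum_i\sum_\alpha (-a_{ii,\alpha})(\delta_{x_i,h_i} W_\alpha)^2 h^d \geq k_{**}\sum_i \|\delta_{x_i,h_i} W\|^2$, which a discrete Poincar\'e inequality from Appendix~\ref{appendix_sec} converts into $c\, k_{**} d\,\|W\|^2$; (iii) the off-diagonal Hessian contributions with $i\neq j$ are indefinite but, via Cauchy–Schwarz and Young's inequality in the difference quotients, are bounded by the diagonal contributions provided the ellipticity ratio is controlled; and (iv) the numerical moment, rewritten through \eqref{moment_factor} and \eqref{moment_ii_2h} and then summed by parts, yields a strictly positive Sobolev-type form bounded below by $\gamma c h_*^2\|W\|^2$ using the spectral estimates on $\delta_{x_i,h_i}^2$ proved in Appendix~\ref{appendix_sec}. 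The standing hypothesis $\gamma > K^{**}/2$ is precisely the generalized-monotonicity condition from Lemma~\ref{g-mon-lemma}, and it is what guarantees that the contributions in (iii) are dominated uniformly in $h$ by (ii) and (iv), giving $\langle W, \mathcal{L} W\rangle \geq (k_{**} c d + k_0 d + \gamma c h_*^2)\|W\|^2$.

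\textbf{Absorption of the quadratic term and main obstacle.} Each centered second-difference operator has $\ell^2$-operator norm $O(h^{-2})$, so the second auxiliary result of Appendix~\ref{appendix_sec} yields $\|\mathcal{L} W\|_{\ell^2} \leq C h^{-2}\|W\|_{\ell^2}$. Writing $\mu_h := k_{**} c d + k_0 d + \gamma c h_*^2$, choosing $\rho$ sufficiently small (on the order of $h^4$) forces $\rho^2 \|\mathcal{L} W\|^2 \leq \rho\mu_h \|W\|^2$, after which the elementary inequality $\sqrt{1 - 2\rho\mu_h + \rho^2 M} \leq 1 - \rho\mu_h$ produces the claimed contraction factor. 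The principal obstacle is step (iii): isolating the indefinite off-diagonal mixed-derivative contributions together with the commutators that arise from the position dependence of the $a_{ij,\alpha}$'s after summation by parts, and absorbing them into the positive contributions (ii) and (iv) while preserving an $h$-independent rate. This is exactly where the numerical moment earns its keep: its diagonal part, which scales like $h_i^2(\delta_{x_i,h_i}^2)^2$ by \eqref{moment_ii_2h}, provides the strictly positive higher-order curvature that dominates the indefinite off-diagonal residues, and the linear-algebra lemmas in Appendix~\ref{appendix_sec} make this dominance quantitative.
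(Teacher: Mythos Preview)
Your approach is genuinely different from the paper's and contains a real gap in step (iii). The paper does \emph{not} argue via coercivity of the bilinear form $\langle W,\mathcal{L}W\rangle$; instead it rewrites the interior update in matrix form $\widehat{\mathbf W}=G\mathbf W$ and bounds $\|G\|_2$ directly by decomposing $G$ as a sum of terms of the type $\tfrac{1}{2N}I-\rho F_kA_k$ with $F_k$ diagonal positive and $A_k$ symmetric positive definite, then invoking Lemma~\ref{lemma_symmetrization} (which gives $\|\sigma I-FA\|_2\le\sigma$ even though $FA$ is not symmetric and its symmetric part need not be positive semidefinite). For the off-diagonal pieces the paper regroups the $F$-contribution and the moment contribution together as $\widetilde F_{ij}\widetilde A_{ij}+\widehat F_{ij}\widehat A_{ij}$, where the hypothesis $\gamma>K^{**}/2$ makes both $\widetilde F_{ij}$ and $\widehat F_{ij}$ positive diagonal, and a coordinate-change argument combined with Lemma~\ref{SPD_ordering_lemma} shows $\widetilde A_{ij}$ and $\widehat A_{ij}$ are SPD. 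The contraction factor then drops out of the triangle inequality on $\|G\|_2$.

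Your coercivity route breaks precisely at the off-diagonal step. The Young-inequality bound you sketch, $|a_{ij,\alpha}|\,|\delta_{x_i}W|\,|\delta_{x_j}W|\le\tfrac{|a_{ij,\alpha}|}{2}\bigl(|\delta_{x_i}W|^2+|\delta_{x_j}W|^2\bigr)$, can be absorbed into the diagonal term (ii) only when $k_{ii}\ge\sum_{j\ne i}|a_{ij}|/2$, i.e.\ when $\partial F/\partial D^2u$ is diagonally dominant---a strictly stronger condition than proper ellipticity and exactly the special case the paper says it does \emph{not} require. Nor can the numerical moment rescue this: its lower bound in your step (iv) is $O(h_*^2)$, while the off-diagonal deficit is $O(1)$. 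More abstractly, even after the paper's regrouping, the quadratic form $W^T\widetilde F_{ij}\widetilde A_{ij}W$ need not be nonnegative (a diagonal positive matrix times an SPD matrix can have indefinite symmetric part), so there is no coercivity inequality to prove here. Finally, note that Appendix~\ref{appendix_sec} contains no discrete Poincar\'e inequality; its content is the SPD-comparison Lemma~\ref{SPD_ordering_lemma} and the operator-norm estimate Lemma~\ref{lemma_symmetrization}, which are the tools the paper's argument actually uses.
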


\begin{proof}
Let $W\equiv V-U$ and $\widehat{W}\equiv \widehat{V}-\widehat{U}$. 
By the Mean Value Theorem and \eqref{partials}, there holds  
{\small 
\begin{align} \label{V_U} 
	&\widehat{W}_\alpha 
	= W_\alpha - \rho \left( \hF[V_\alpha, \bx_\alpha] - \hF[U_\alpha, \bx_\alpha] \right) \\ 
	\nonumber &\, = W_\alpha 
		- \rho \left( \frac12 \frac{\partial F}{\partial D^2 u} + \gamma \mathbf{1}_{d \times d} \right) 
			: \widetilde{D}_{\bh}^2 W_\alpha 
		- \rho \left( \frac12 \frac{\partial F}{\partial D^2 u} - \gamma \mathbf{1}_{d \times d} \right) 
			: \widehat{D}_{\bh}^2 W_\alpha 
		- \rho \frac{\partial F}{\partial u} W_\alpha \\ 
	\nonumber &\, = \left( 1 - \rho \frac{\partial F}{\partial u} \right) W_\alpha 
		- \rho \sum_{i=1}^d \frac{\partial F}{\partial u_{x_i x_i}} \delta_{x_i, 2 h_i}^2 W_\alpha \\ 
		\nonumber & \quad 
		- \rho \gamma \sum_{i=1}^d \left( \left[ \widetilde{D}_{\bh}^2 W_\alpha \right]_{ii} 
				- \left[ \widehat{D}_{\bh}^2 W_\alpha \right]_{ii} \right) \\ 
			\nonumber & \quad 
		- \rho \sum_{i=1}^d \sum_{j=1 \atop j \neq i}^d \left( \frac12 \frac{\partial F}{\partial u_{x_i x_j}} + \gamma \right) 
			\left[ \widetilde{D}_{\bh}^2 W_\alpha \right]_{ij} 
		- \rho \sum_{i=1}^d \sum_{j=1 \atop j \neq i}^d \left( \frac12 \frac{\partial F}{\partial u_{x_i x_j}} - \gamma \right) 
			\left[ \widehat{D}_{\bh}^2 W_\alpha \right]_{ij} \\ 
	\nonumber &\, \equiv G_\alpha W_\alpha 
\end{align} 
}
where $G_\alpha$ is a linear operator which depends on $V_\alpha$ and $U_\alpha$.  

Observe that the boundary conditions can naturally be eliminated since 
$W_\alpha = \widehat{W}_\alpha = 0$ for all $\mathbf{x}_\alpha \in \cT_{\bh} \cap \partial \Omega$
and the auxiliary boundary condition implies that, for each 
$\mathbf{x}_\alpha \in \mathcal{S}_{\bh} \subset \cT_{\bh} \cap \partial \Omega$, 
there exists an index $i$ such that 
$W_{\alpha + \mathbf{e}_i} = - W_{\alpha - \mathbf{e}_i}$ and 
$\widehat{W}_{\alpha + \mathbf{e}_i} = - \widehat{W}_{\alpha - \mathbf{e}_i}$. 
Thus, the problem can naturally be reformulated on the interior grid $\cT_{\bh} \cap \Omega$.  

Next, we are going to write the above mapping (on grid functions) as an 
equivalent matrix transformation (on vectors). To this end, let  $J_0 = | \cT_{\bh} \cap \Omega |$ and 
$\widehat{\mathbf{W}} , \mathbf{W} \in \mathbb{R}^{J_0}$ denote the vectorization of the 
grid functions $\widehat{W}$ and $W$ restricted to $\cT_{\bh} \cap \Omega$, respectively.   
We first introduce a series of symmetric positive definite matrices that correspond to the various central difference 
operators appearing in \eqref{V_U}.  
Let $A_i$ denote the matrix representation of $-\delta_{x_i, 2 h_i}^2 W_\alpha$ and $\lambda_i$ be  its smallest eigenvalue. 
Define $\lambda_0 = \min_i \{ \lambda_i \}$.  
Then, 
\begin{align*}
	A_i  \geq \lambda_{i} I \geq \lambda_0 I . 
\end{align*}
Let $A_{ii}$ denote the matrix representation of 
$[ \widetilde{D}_{\bh}^2 W_\alpha ]_{ii} - [ \widehat{D}_{\bh}^2 W_\alpha ]_{ii}$. 
By \eqref{moment_factor} and the choice of the auxiliary boundary condition restricted to the boundary nodes in 
$\mathcal{S}_{\bh}$, we have $A_{ii}$ is symmetric positive definite 
with minimal eigenvalue bounded below by $c h_i^2$ for some constant $c$ independent of $h$ 
using the fact that the minimal eigenvalue of the matrix representation of $-\delta_{x_i, h_i}^2$ 
with Dirichlet boundary conditions 
is a positive constant independent of $h$.  
Thus, 
\[
	A_{ii} \geq c h_*^2 I . 
\]
Let $\widetilde{A}_{ij}$ denote the matrix representation of the operator 
$[ \widetilde{D}_{\bh}^2 W_\alpha ]_{ij}$ 
and $\widehat{A}_{ij}$ denote the matrix representation of the operator 
$[ - \widehat{D}_{\bh}^2 W_\alpha ]_{ij}$. 
Then, by \eqref{hessian_diag}, 
\[
	\widetilde{A}_{ij} = \frac{1}{2 h_i h_j} \left( B_i + B_j - B_{\eta, ij} \right) , \qquad 
	\widehat{A}_{ij} = - \frac{1}{2 h_i h_j} \left( - B_i - B_j + B_{\xi, ij} \right)
\]
for $B_i$ being the matrix representation of 
$2 W_\alpha - W_{\alpha + \mathbf{e}_i} - W_{\alpha - \mathbf{e}_i}$, 
$B_j$ the matrix representation of 
$2 W_\alpha - W_{\alpha + \mathbf{e}_j} - W_{\alpha - \mathbf{e}_j}$, 
$B_{\eta, ij}$ the matrix representation of 
$2 W_\alpha - W_{\alpha - \mathbf{e}_i - \mathbf{e}_j} - W_{\alpha + \mathbf{e}_i + \mathbf{e}_j}$, 
and $B_{\xi, ij}$ the matrix representation of 
$2 W_\alpha - W_{\alpha - \mathbf{e}_i + \mathbf{e}_j} - W_{\alpha + \mathbf{e}_i - \mathbf{e}_j}$.  
Then, $B_i$, $B_j$, $B_{\eta,ij}$, and $B_{\xi,ij}$ are all symmetric positive definite 
and 
\[
	B_i + B_j = X^{-1} ( B_{\eta,ij} + B_{\xi,ij} ) X 
\]
for a nonsingular matrix $X$ corresponding to the coordinate transformation 
that maps $x_i \to \eta_i^j$ and $x_j \to \xi_i^j$.  
Thus, the eigenvalues of $B_i + B_j$ and $B_{\eta,ij} + B_{\xi,ij}$ are the same.  
Since both matrices are normal, we have there exists an orthogonal matrix $Q$ such that 
\[
	B_i + B_j = Q^T ( B_{\eta,ij} + B_{\xi,ij} ) Q > Q^T B Q 
\]
for $B = B_{\eta,ij}, B_{\xi,ij}$, 
and it follows by Lemma~\ref{SPD_ordering_lemma} that 
\[
	B_i + B_j > B_{\eta,ij} , \qquad B_i + B_j > B_{\xi,ij} . 
\]
Therefore, $\widetilde{A}_{ij}$ and $\widehat{A}_{ij}$ are symmetric positive definite.  

We next introduce several diagonal matrices that correspond to the various coefficients 
for the central difference operators appearing in \eqref{V_U}.  
Let $F_0$ denote the diagonal matrix 
corresponding to the nodal values of $\frac{\partial F}{\partial u}$,  
$F_{i}$ denote the matrix corresponding to 
$-\frac{\partial F}{\partial u_{x_i x_i}}$, 
$\widetilde{F}_{ij}$ denote the matrix corresponding to  
$\frac12 \frac{\partial F}{\partial u_{x_i x_j}} + \gamma$,  
and $\widehat{F}_{ij}$ denote the matrix corresponding to  
$\gamma - \frac12 \frac{\partial F}{\partial u_{x_i x_j}}$. 
Then, 
\begin{align*}
	F_0  \geq k_0 I , \qquad
	F_i  \geq k_{**} I , \qquad 
	\widetilde{F}_{ij} \geq \left( \gamma - K^{**}/2 \right) I , \qquad 
	\widehat{F}_{ij} \geq \left( \gamma - K^{**}/2 \right) I . 
\end{align*}

Let $N = 1 + 2d + 2d(d-1)$. 
Putting everything together, we have \eqref{V_U} can be equivalently written as
\begin{align} \label{V_U_matrix}
\widehat{\mathbf{W}}  =G \mathbf{W},
\end{align}
where 
\begin{align*}
G& = I - \rho F_0 - \rho  \sum_{i=1}^d F_i A_i
- \rho \gamma \sum_{i=1}^d A_{ii}  
- \rho \sum_{i=1}^d \sum_{j=1 \atop j \neq i}^d \left( \widetilde{F}_{ij} \widetilde{A}_{ij} + 	\widehat{F}_{ij} \widehat{A}_{ij}  \right) \\ 
\nonumber 
& = \frac12 I+ \left( \frac{1}{2N} I - \rho F_0 \right) 
+ \sum_{i=1}^d \left( \frac{1}{4N} I - \rho \frac{k_{**}}{2} A_i \right)  \\ 
& \qquad \nonumber 
+ \sum_{i=1}^d \left[ \frac{1}{4N} I - \rho \left(F_i - \frac{k_{**}}{2} I \right) A_i \right]
+ \sum_{i=1}^d \left( \frac{1}{2N} I - \rho \gamma A_{ii} \right) \\ 
& \qquad \nonumber 
+ \sum_{i=1}^d \sum_{j=1 \atop j \neq i}^d \left( \frac{1}{2N} I - \rho \widetilde{F}_{ij} \widetilde{A}_{ij} \right) 
+ \sum_{i=1}^d \sum_{j=1 \atop j \neq i}^d \left( \frac{1}{2N} I - \rho \widehat{F}_{ij} \widehat{A}_{ij} \right).
\end{align*}
Notice that the $F$ matrices are symmetric positive definite since they are all diagonal with strictly positive entries 
and the $A$ matrices are all symmetric positive definite.  
Thus, by Lemma~\ref{lemma_symmetrization}, for $\rho > 0$ sufficiently small, there holds 
\begin{align} \label{V_U_matrix2}
\| G \|_2 
& \leq \frac12 + \left\| \frac{1}{2N} I - \rho F_0 \right\|_2 
+ \sum_{i=1}^d \left\| \frac{1}{4N} I - \rho \frac{k_{**}}{2} A_i \right\|_2 \\ 
& \qquad \nonumber 
+ \sum_{i=1}^d \left\| \frac{1}{4N} I - \rho \left(F_i - \frac{k_{**}}{2} I \right) A_i \right\|_2
+ \sum_{i=1}^d \left\| \frac{1}{2N} I - \rho \gamma A_{ii} \right\|_2 \\ 
& \qquad \nonumber 
+ \sum_{i=1}^d \sum_{j=1 \atop j \neq i}^d \left\| \frac{1}{2N} I - \rho \widetilde{F}_{ij} \widetilde{A}_{ij} \right\|_2  
+ \sum_{i=1}^d \sum_{j=1 \atop j \neq i}^d \left\| \frac{1}{2N} I - \rho \widehat{F}_{ij} \widehat{A}_{ij} \right\|_2 \\ 
& \nonumber \leq \frac12 - \rho k_0 - \rho \frac{d k_{**} \lambda_0}{2} - \rho d \gamma c h_*^2 + N \frac{1}{2N} \\ 
& \nonumber = 1 - \rho k_0 - \rho \frac{d k_{**} \lambda_0}{2} - \rho d \gamma c h_*^2,
\end{align}
which, when combined with the inequality $\|\widehat{\bf{W}}\|_2\leq \|G\|_2 \| \mathbf{W} \|_2$, 
yields the desired inequality. The proof is complete.  
\hfill 
\end{proof}


As a corollary to Lemma~\ref{lemma_contraction}, we immediately 
have the following well-posedness result for our finite difference scheme.  

\begin{theorem} \label{admissible_prop}
Suppose the operator $F$ in \eqref{F_D2} 
is proper elliptic and Lipschitz continuous with respect to its first two arguments. 
The scheme defined by 
\eqref{FD_method}--\eqref{FD_auxiliaryBC} and \eqref{Abeta} for problem \eqref{F_D2} 
with $\gamma > K^{**}/2$ and $\beta = 0$ 
has a unique solution.  
\end{theorem}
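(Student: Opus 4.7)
The plan is to deduce the theorem directly from Lemma \ref{lemma_contraction} by recasting the scheme \eqref{FD_method}--\eqref{FD_auxiliaryBC} as a fixed point problem for the mapping $\cM_\rho$ defined in \eqref{M_rho}--\eqref{M_rho_matrix} and then invoking the Banach Contraction Mapping Theorem. Specifically, I would first verify (as already noted after \eqref{M_rho_matrix_bc2}) the equivalence: a grid function $U \in S(\cT_{\bh}')$ solves \eqref{FD_method}--\eqref{FD_auxiliaryBC} if and only if $U$ is a fixed point of $\cM_\rho$ for any $\rho > 0$. Indeed, if $U = \cM_\rho U$, then \eqref{M_rho_interior} gives $\rho \hF[U_\alpha, \bx_\alpha] = 0$ for every interior node, and since $\rho > 0$ this is exactly \eqref{FD_method:1}; the boundary and auxiliary conditions \eqref{FD_method:2} and \eqref{FD_auxiliaryBC} are built directly into $\cM_\rho$. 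The converse direction is immediate.

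Next, I would restrict attention to the affine subspace
\[
X_g \equiv \big\{ U \in S(\cT_{\bh}') \, : \, U_\alpha = g(\bx_\alpha) \text{ on } \cT_{\bh} \cap \partial \Omega, \ \Delta_{\bh} U_\alpha = 0 \text{ on } \mathcal{S}_{\bh} \big\} .
\]
By construction $\cM_\rho$ maps $S(\cT_{\bh}')$ into $X_g$, and in particular $\cM_\rho(X_g) \subset X_g$. Since $X_g$ is a closed affine subset of the finite-dimensional space $S(\cT_{\bh}')$, it is complete in the $\ell^2$-norm on $\cT_{\bh}$ (which is a norm on $X_g$ after fixing the boundary data). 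For any $U, V \in X_g$, the difference $U - V$ vanishes on $\cT_{\bh} \cap \partial \Omega$ and satisfies the homogeneous auxiliary condition on $\mathcal{S}_{\bh}$, so the hypotheses of Lemma \ref{lemma_contraction} are met. Choosing $\rho > 0$ sufficiently small and using $k_{**}, k_0 > 0$ (or, when $k_0 = 0$, using $k_{**} > 0$ together with $\gamma > 0$ from the numerical-moment term) we obtain
\[
\| \cM_\rho U - \cM_\rho V \|_{\ell^2(\cT_{\bh})} \leq \kappa \| U - V \|_{\ell^2(\cT_{\bh})}
\]
with a constant $\kappa = 1 - \rho k_{**} c d - \rho k_0 d - \rho \gamma c h_*^2 < 1$. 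The Banach Contraction Mapping Theorem then yields a unique fixed point $U^* \in X_g$, which by the equivalence above is the unique solution of \eqref{FD_method}--\eqref{FD_auxiliaryBC}.

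The only subtle point I expect to need care with is the bookkeeping for the ghost values when verifying that $\cM_\rho$ is a self-map of $X_g$ and that $\ell^2(\cT_{\bh})$ is genuinely a norm on $X_g$ (rather than just a seminorm on $S(\cT_{\bh}')$). Once $\widehat{U}$ is defined by \eqref{M_rho_interior} on interior nodes and by the Dirichlet values on $\cT_{\bh} \cap \partial \Omega$, the auxiliary equation \eqref{M_rho_matrix_bc2} uniquely determines each ghost value as a linear combination of interior and boundary values, so the mapping is well-defined and the homogeneous auxiliary condition passes to the difference $U - V$. With that verification in place, the rest of the argument is the standard Banach fixed point argument applied to the contraction furnished by Lemma \ref{lemma_contraction}. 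The main conceptual obstacle was already absorbed into Lemma \ref{lemma_contraction} itself, namely the symmetric-positive-definite structure that forces the $\ell^2$ contraction despite the non-monotonicity of $\hF$; here the only work is to package that estimate into a proper fixed-point statement.
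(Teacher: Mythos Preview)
Your proposal is correct and follows essentially the same approach as the paper: the paper's proof simply notes that Lemma~\ref{lemma_contraction} furnishes a $\rho>0$ for which $\cM_\rho$ is a contraction, invokes the Contractive Mapping Theorem to obtain a unique fixed point, and appeals to the already-stated equivalence between fixed points of $\cM_\rho$ and solutions of the scheme. Your version is more explicit about the affine subspace $X_g$ and the ghost-value bookkeeping, but the underlying argument is identical.
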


\begin{proof} 
By Lemma~\ref{lemma_contraction}, we have there exists a value $\rho > 0$ such that 
the operator $\mathcal{M}_\rho$ is a contraction in its matrix form.  
By the Contractive Mapping Theorem we conclude that $\cM_\rho$ has a unique fixed point $U\in S(\cT_{\bh}^\prime)$.  
Thus, there is a unique solution to the FD scheme 
\eqref{FD_method}--\eqref{FD_auxiliaryBC} and \eqref{Abeta}
in the space $S(\cT_{\bh}^\prime)$ by the equivalence of the fixed-point problem. The proof is complete. 
\hfill
\end{proof}

\begin{remark} 
We emphasize that the scheme has a unique solution even for degenerate
 problems with $k_{**} = 0$ and $k_0 = 0$.  
Furthermore, if $F$ is independent of $u_{x_i x_j}$ for all $i \neq j$, 
then we can choose $\gamma = 0$. 

\end{remark}


We now derive an $\ell^2$-norm stability estimate for the proposed narrow-stencil finite difference scheme 
when the PDE operator $F$ is proper elliptic.  

\begin{theorem} \label{prop_stability} 
Suppose the operator $F$ in \eqref{F_D2} 
is proper elliptic and Lipschitz continuous with respect to its first two arguments 
and $k_0 > 0$ or $k_{**} > 0$,  
where $k_0$ denotes the lower ellipticity constant with respect to the zeroth order variable 
and $k_{**}$ denotes the lower ellipticity constant with respect to the second order variable.  
The finite difference scheme defined by 
\eqref{FD_method}--\eqref{FD_auxiliaryBC} and \eqref{Abeta} for problem \eqref{F_D2} 
with $\hF$ defined by \eqref{hatF} 
with $\gamma > K^{**}/2$ and $\beta = 0$ 
is $\ell^2$-norm stable when $F(0,0,\cdot) \in C^0(\overline{\Omega})$ and 
$g \in C^0(\partial \Omega)$ 
in the sense that the unique solution $U$ of the scheme satisfies  
\[
	\Bigl( \prod_{i=1,2,\ldots,d} h_i^{\frac12} \Bigr) \left\| U \right\|_{\ell^2(\cT_{\mathbf{h}} \cap \Omega)} 
	+ \left\| U \right\|_{\ell^\infty(\cT_{\bh} \cap \partial \Omega)}
	\leq C , 
\]
where $C$ is a positive constant independent of $\mathbf{h}$ 
that depends on $\Omega$, the lower (proper) ellipticity constants $k_0$ and $k_{**}$, 
$\| F(0,0,\cdot) \|_{C^0(\overline{\Omega})}$,   
and $\| g \|_{C^0(\partial \Omega)}$. 
\end{theorem}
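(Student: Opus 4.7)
The plan is to apply the contraction property of $\cM_\rho$ from Lemma \ref{lemma_contraction} by comparing the unique fixed point $U$ to an auxiliary grid function $V$ that encodes the boundary data. The boundary piece $\|U\|_{\ell^\infty(\cT_\bh \cap \partial\Omega)} \leq \|g\|_{C^0(\partial\Omega)}$ is immediate from the Dirichlet condition \eqref{FD_method:2}, so the work is entirely in controlling the weighted interior $\ell^2$-norm.

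For the interior bound, pick any $V \in S(\cT_\bh')$ with $V_\alpha = g(\bx_\alpha)$ on $\cT_\bh \cap \partial\Omega$ and $\Delta_\bh V_\alpha = 0$ for $\bx_\alpha \in \mathcal{S}_\bh$. Applying Lemma \ref{lemma_contraction} to $U$ and $V$, and using $\widehat U = \cM_\rho U = U$, yields
\[
\|U - \widehat V\|_{\ell^2(\cT_\bh)} \leq \kappa \, \|U - V\|_{\ell^2(\cT_\bh)}, \qquad \kappa := 1 - \rho\bigl(k_{**} c d + k_0 d + \gamma c h_*^2\bigr) < 1 .
\]
Since $U - V = 0$ on $\cT_\bh \cap \partial \Omega$, the two sides reduce to norms on $\cT_\bh \cap \Omega$. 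Combining the triangle inequality with the identity $\widehat V_\alpha - V_\alpha = -\rho \hF[V, \bx_\alpha]$ at interior nodes produces $(1 - \kappa) \|U - V\|_{\ell^2(\cT_\bh\cap\Omega)} \leq \rho \|\hF[V, \cdot]\|_{\ell^2(\cT_\bh\cap\Omega)}$. Because $k_0 > 0$ or $k_{**} > 0$, the quantity $(1-\kappa)/\rho$ is bounded below by a positive constant $C_0$ independent of $\mathbf{h}$, yielding $\|U - V\|_{\ell^2(\cT_\bh \cap \Omega)} \leq C_0^{-1} \|\hF[V, \cdot]\|_{\ell^2(\cT_\bh \cap \Omega)}$.

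The remaining task is to choose $V$ so that both $(\prod_{i} h_i^{1/2}) \|V\|_{\ell^2}$ and $(\prod_{i} h_i^{1/2}) \|\hF[V, \cdot]\|_{\ell^2}$ are bounded in terms of the data. A natural first choice is to take $V_\alpha = \bar g(\bx_\alpha)$ for a continuous extension $\bar g \in C(\bar\Omega)$ of $g$ (obtainable via Tietze's theorem), giving $\|V\|_{\ell^\infty} \leq \|g\|_{C^0(\partial\Omega)}$ and hence $(\prod_{i} h_i^{1/2}) \|V\|_{\ell^2} \leq |\Omega|^{1/2} \|g\|_{C^0(\partial\Omega)}$. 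The Lipschitz continuity of $F$ (reflected in the bounds \eqref{partials}) together with the definition \eqref{hatF} produces the pointwise estimate
\[
|\hF[V, \bx_\alpha]| \leq \|F(0,0,\cdot)\|_{C^0} + K\bigl(|\overline D_\bh^2 V_\alpha| + |\nabla_\bh V_\alpha| + |V_\alpha|\bigr) + \gamma \bigl| \mathbf{1}_{d \times d} : (\widetilde D_\bh^2 V_\alpha - \widehat D_\bh^2 V_\alpha) \bigr|,
\]
reducing matters to a weighted $\ell^2$-bound on the discrete gradient, Hessian, and moment of $V$.

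The main obstacle is precisely this last bound: since $g$ is only assumed to be $C^0$, a generic continuous extension $\bar g$ need not have uniformly bounded discrete derivatives near $\partial\Omega$, and a naive choice like $V_\alpha = 0$ in the interior produces boundary-adjacent contributions of size $O(1/h^2)$ that blow up in the weighted $\ell^2$-norm. The intended resolution is to take $V$ as the solution of an auxiliary discrete linear elliptic problem adapted to the numerical operator (for instance a discrete harmonic extension, or the discretization of $-\gamma \Delta v + k_0 v = -F(0,0,0,\cdot)$ with $v = g$ on $\partial \Omega$), whose discrete $H^2$-regularity estimate gives the required control on $\overline D_\bh^2 V$, $\widetilde D_\bh^2 V$, and $\widehat D_\bh^2 V$ purely in terms of $\|g\|_{C^0(\partial\Omega)}$ and $\|F(0,0,\cdot)\|_{C^0(\overline\Omega)}$. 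Once those bounds are in hand, the inequality $\|U\|_{\ell^2} \leq \|V\|_{\ell^2} + C_0^{-1} \|\hF[V, \cdot]\|_{\ell^2}$ multiplied by $\prod_{i} h_i^{1/2}$ closes the proof.
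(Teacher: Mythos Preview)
Your overall strategy (apply the contraction estimate of Lemma~\ref{lemma_contraction} to the pair $(U,V)$ and rearrange) is exactly what the paper does, and your handling of the boundary piece via \eqref{FD_method:2} is correct. The divergence is in the choice of the comparison function $V$, and that is where your argument stalls.

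You insist that $V$ must agree with $g$ on $\cT_\bh\cap\partial\Omega$ and satisfy the auxiliary condition on $\mathcal S_\bh$, so that $U-V$ vanishes on the boundary and Lemma~\ref{lemma_contraction} applies cleanly. That constraint is precisely what creates the obstacle you then spend the rest of the proposal trying to defeat: any such $V$ carries the boundary data into the interior, and for $g$ that is merely continuous its discrete second derivatives near $\partial\Omega$ are $O(h^{-2})$, which is fatal for the weighted $\ell^2$ bound on $\hF[V,\cdot]$. Your suggested fix---solve an auxiliary discrete elliptic problem and invoke a discrete $H^2$-regularity estimate---is not carried out and is not available in the paper; it would be substantial new work.

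The paper sidesteps the whole issue by choosing $V=0$ on all of $\cT_\bh'$, including boundary and ghost nodes. Then every discrete derivative of $V$ vanishes identically, so $\hF[V,\bx_\alpha]=F(0,0,\bx_\alpha)$ at each interior node and $\widehat V_\alpha=-\rho F(0,0,\bx_\alpha)$. The triangle-inequality chain becomes
\[
\|U\|_{\ell^2(\cT_\bh\cap\Omega)}=\|\widehat U\|_{\ell^2(\cT_\bh\cap\Omega)}\le\theta\,\|U-V\|_{\ell^2(\cT_\bh\cap\Omega)}+\|\widehat V\|_{\ell^2(\cT_\bh\cap\Omega)}\le\theta\,\|U\|_{\ell^2(\cT_\bh\cap\Omega)}+\rho\|F(0,0,\cdot)\|_{\ell^2(\cT_\bh\cap\Omega)},
\]
since $\|V\|_{\ell^2(\cT_\bh\cap\Omega)}=0$, giving $\|U\|_{\ell^2(\cT_\bh\cap\Omega)}\le(k_{**}c+k_0)^{-1}\|F(0,0,\cdot)\|_{\ell^2(\cT_\bh\cap\Omega)}$ after dividing out $\rho$. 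Multiplying by $\prod_i h_i^{1/2}$ finishes. The paper addresses your concern about $V\neq g$ on the boundary with the remark that the matrix estimate in the proof of Lemma~\ref{lemma_contraction} is obtained after vectorizing over interior nodes only, so the contraction constant on the interior $\ell^2$-norm is unaffected. Whether or not one finds that remark fully satisfying, the point is that no discrete elliptic regularity is needed: the choice $V=0$ makes $\widehat V$ explicitly computable and uniformly bounded.
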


\begin{proof}
Let $\cM_\rho$ be the mapping defined by \eqref{M_rho} 
and $\rho > 0$ be sufficiently small to ensure Lemma~\ref{lemma_contraction} holds.   
Define $\theta \equiv 1 - \rho k_{**} c - \rho k_0 \in (0,1)$.  
Then, by Lemma~\ref{lemma_contraction} and Theorem~\ref{admissible_prop}, 
for $U$ the solution to the proposed FD scheme and $V \in S(\cT_{\mathbf{h}}^\prime)$, 
there holds 
\begin{align*}
	\| U \|_{\ell^2(\cT_{\mathbf{h}} \cap \Omega)}  
	& = \| \widehat{U} \|_{\ell^2(\cT_{\mathbf{h}} \cap \Omega)} \\ 
	& \leq \| \widehat{U} - \widehat{V} \|_{\ell^2(\cT_{\mathbf{h}} \cap \Omega)} 
		+ \| \widehat{V} \|_{\ell^2(\cT_{\mathbf{h}} \cap \Omega)} \\ 
	& \leq \theta \| U - V \|_{\ell^2(\cT_{\mathbf{h}} \cap \Omega)} 
		+ \| \widehat{V} \|_{\ell^2(\cT_{\mathbf{h}} \cap \Omega)} \\ 
	& \leq \theta \| U \|_{\ell^2(\cT_{\mathbf{h}} \cap \Omega)} 
		+ \theta \| V \|_{\ell^2(\cT_{\mathbf{h}} \cap \Omega)} 
		+ \| \widehat{V} \|_{\ell^2(\cT_{\mathbf{h}} \cap \Omega)} , 
\end{align*}
where the restriction to $\cT_{\mathbf{h}} \cap \Omega$ follows from the fact that 
we can assume $U_\alpha = V_\alpha = 0$ on $\cT_{\mathbf{h}} \cap \partial \Omega$ 
without a loss of generality in the proof of Lemma~\ref{lemma_contraction}.  Thus, 
\begin{equation} 
	\left\| U \right\|_{\ell^2(\cT_{\mathbf{h}} \cap \Omega)} 
	\leq \frac{\theta}{1-\theta} \Bigl( \| V \|_{\ell^2(\cT_{\mathbf{h}} \cap \Omega)} 
	+ \frac{1}{\theta} \| \widehat{V} \|_{\ell^2(\cT_{\mathbf{h}} \cap \Omega)} \Bigr) . 
\end{equation}

Choose $V = 0 \in S(\cT_{\mathbf{h}}')$.  
Then, $\| 0 \|_{\ell^2(\cT_{\mathbf{h}} \cap \Omega)} = 0$, and we have 
\begin{equation} \label{ubound0}
	\left\| U \right\|_{\ell^2(\cT_{\mathbf{h}} \cap \Omega)} \leq 
	\frac{1}{1 - \theta} \| \widehat{0} \|_{\ell^2(\cT_{\mathbf{h}} \cap \Omega)} . 
\end{equation} 
Thus, the result holds if we can uniformly bound $\frac{1}{1-\theta} \| \widehat{0} \|_{\ell^2(\cT_{\mathbf{h}} \cap \Omega)}$.  

By the definition of $\cM_\rho$ in \eqref{M_rho_matrix}, 
we have $\widehat{0}$ is the solution to 
\begin{subequations} 
\begin{alignat}{2} 
	\widehat{0}_\alpha
	& =  - \rho F (0,0,\mathbf{x}_\alpha)  \qquad  
		&& \text{for all } \mathbf{x}_\alpha \in \Omega , \\ 
	\widehat{0}_\alpha 
	& = g(\bx_\alpha) 
		&& \text{for all } \mathbf{x}_\alpha \in \partial \Omega . 
\end{alignat} 
\end{subequations} 
Thus, 
\[
	\| \widehat{0} \|_{\ell^2(\cT_{\mathbf{h}} \cap \Omega)} 
	\leq 
	\rho \| F(0,0,\cdot) \|_{\ell^2(\cT_{\mathbf{h}} \cap \Omega)} . 
\] 
Plugging this into \eqref{ubound0}, it follows that 
\[
	\left\| U \right\|_{\ell^2(\cT_{\mathbf{h}} \cap \Omega)} \leq 
	\frac{1}{k_{**} c + k_0} \| F (0,0,\cdot) \|_{\ell^2(\cT_{\mathbf{h}} \cap \Omega)} . 
\]
Thus, 
\begin{align*}
	\Bigl( \prod_{i=1,2,\ldots,d} h_i^{\frac12} \Bigr) \bigl\| U \bigr\|_{\ell^2(\cT_{\mathbf{h}} \cap \Omega)} 
	& \leq \Bigl( \prod_{i=1,2,\ldots,d} h_i^{\frac12} \Bigr) \frac{1}{k_{**} c + k_0} \| F (0,0,\cdot) \|_{\ell^2(\cT_{\mathbf{h}} \cap \Omega)} \\ 
	& \leq \frac{| \Omega |^d}{k_{**} c + k_0} \| F(0,0,\cdot) \|_{C^0(\overline{\Omega})} , 
\end{align*}
and the result follows 
from the fact that $U_\alpha = g(\bx_\alpha)$ for all $\bx_\alpha \in \cT_{\bh} \cap \partial \Omega$ 
with $\| g \|_{C^0(\partial \Omega)} < \infty$.  
The proof is complete. 
\hfill
\end{proof}

\begin{remark}
Note that the weighting $\bigl( \prod_{i=1,2,\ldots,d} h_i^{\frac12} \bigr)$ comes from the fact that 
we are bounding $U$ in $\ell^2$.  
The weighting is consistent with using the $L^2$-norm for $F(0,0,\cdot)$ in the limit as $h \to 0$.  
\end{remark}


\section{$\ell^2$-norm stability for discrete second order derivatives and $\ell^\infty$-norm stability} \label{H2-stability_sec}
 
In this section we derive an $\ell^\infty$-norm stability estimate for solutions of the proposed finite difference method  
using a novel discrete Sobolev embedding estimate that requires first showing $\ell^2$-norm stability 
for discrete second order derivatives of the solution to the finite difference method.  

\subsection{$\ell^2$-norm stability for discrete second order derivatives}\label{sec-5.1}
In this subsection we derive uniform $\ell^2$-norm estimates for the discrete 
second order derivatives
$\delta_{x_i, 2h_i}^2 U$ by using another fixed-point mapping 
based on a Sobolev iteration technique.
Again for the ease of presentation we consider the parameters 
\[
	A=\gamma \mathbf{1}_{d \times d}, \qquad \vec{\beta} = \beta \vec{1} 
\] 
for $\gamma, \beta \geq 0$ sufficiently large and the PDE problem \eqref{F_D2}. 

Consider the mapping 
$\cM_{1,\rho} : S(\cT_{\mathbf{h}}^\prime) \to S(\cT_{\mathbf{h}}^\prime)$ defined by 
\begin{equation}\label{M21_rho}
	\widehat{U}
	\equiv \cM_{1,\rho} U, 
\end{equation}
where the grid function $\widehat{U}\in S(\cT_{\mathbf{h}}^\prime)$ is defined by 
\begin{subequations} \label{M21_rho_matrix}
\begin{alignat}{2} \label{M21_rho_interior} 
	 - \delta_{x_1, 2h_1}^2 \widehat{U}_\alpha &= - \delta_{x_1, 2h_1}^2 U_\alpha 
		- \rho \hF \left[ U_\alpha , \bx_\alpha \right] , \qquad 
		&& \text{if } \bx_\alpha \in \cT_{\bh} \cap \Omega , \\ 
	 \widehat{U}_\alpha &= g(\bx_\alpha) , && \text{if } \bx_\alpha \in \cT_{\bh} \cap \partial \Omega , 
		\label{M21_rho_matrix_bc1} \\ 
	 \Delta_{\bh} \widehat{U}_\alpha &= 0 , && \text{if } \bx_\alpha \in \mathcal{S}_{\bh} \subset \cT_{\bh} \cap \partial \Omega 
		\label{M21_rho_matrix_bc2}
\end{alignat}
\end{subequations} 
for $\rho > 0$ constant.  Note that any fixed point of 
$\cM_{1,\rho}$ is a solution to the proposed finite difference scheme 
\eqref{FD_method}--\eqref{FD_auxiliaryBC} and vice versa.

\begin{lemma} \label{lemma_contraction2} 
Suppose the operator $F$ in \eqref{F_D2} is proper elliptic, differentiable, and 
Lipschitz continuous with respect to its first two arguments.  
Choose $U,V\in S(\cT_{\mathbf{h}}')$, 
and let $\widehat{U} = \cM_{1,\rho} U$ and $\widehat{V} = \cM_{1,\rho} V$ 
for $\cM_{1,\rho}$ defined by \eqref{M21_rho} and \eqref{M21_rho_matrix} with 
$\hF$ defined by \eqref{hatF}, $\gamma > K^{**}/2$, and $\beta = 0$. 
Then there holds 
\[
	\Big\| \delta_{x_1, 2h_1}^2 \big( \widehat{U} - \widehat{V} \big) \Big\|_{\ell^2(\cT_{\mathbf{h}} \cap \Omega)} 
	\leq (1 - \rho k_{**}) 
		\Big\| \delta_{x_1, 2h_1}^2 \left( U - V \right) \Big\|_{\ell^2(\cT_{\mathbf{h}} \cap \Omega)} 
\] 
for all $\rho > 0$ with $\rho$ sufficiently small.  
\end{lemma}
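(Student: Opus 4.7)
The plan is to mirror the proof of Lemma~\ref{lemma_contraction}, with the addition of post-multiplication by $A_1 := -\delta_{x_1, 2h_1}^2$. Set $W := V - U$ and $\widehat{W} := \widehat{V} - \widehat{U}$. Subtracting the two instances of \eqref{M21_rho_interior} and applying the Mean Value Theorem to $\hF[V_\alpha,\bx_\alpha] - \hF[U_\alpha,\bx_\alpha]$ exactly as in the proof of Lemma~\ref{lemma_contraction} produces a linear expansion $\hF[V_\alpha] - \hF[U_\alpha] = G_\alpha W_\alpha$ with the same structure. Because the Dirichlet boundary condition and the auxiliary boundary condition \eqref{M21_rho_matrix_bc2} impose homogeneous conditions on both $W$ and $\widehat{W}$, I can reformulate the problem on the interior grid in matrix form to obtain
\[
A_1 \widehat{\mathbf{W}} = A_1 \mathbf{W} - \rho\, \mathcal{G}\, \mathbf{W} = \bigl( I - \rho\, \mathcal{G} A_1^{-1} \bigr)\, A_1 \mathbf{W},
\]
where $\mathcal{G} := F_0 + \sum_{i=1}^d F_i A_i + \gamma \sum_{i=1}^d A_{ii} + \sum_{i=1}^d\sum_{j\neq i}\bigl(\widetilde{F}_{ij}\widetilde{A}_{ij} + \widehat{F}_{ij}\widehat{A}_{ij}\bigr)$ is the same linear operator that appears in the proof of Lemma~\ref{lemma_contraction}. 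It thus suffices to show $\|I - \rho\, \mathcal{G} A_1^{-1}\|_2 \leq 1 - \rho k_{**}$ for $\rho > 0$ sufficiently small.

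The crucial structural identity is $F_1 A_1 A_1^{-1} = F_1$, so that the $i=1$ contribution to $\mathcal{G} A_1^{-1}$ is purely diagonal. Setting $\mathcal{R} := \mathcal{G} - F_1 A_1$, I would split
\[
I - \rho\, \mathcal{G} A_1^{-1} = \tfrac{1}{2}\bigl( I - 2\rho F_1 \bigr) + \Bigl( \tfrac{1}{2} I - \rho\, \mathcal{R} A_1^{-1} \Bigr).
\]
Since $F_1$ is diagonal with entries in $[k_{**}, K_{11}]$, the first piece satisfies $\|\tfrac{1}{2}(I - 2\rho F_1)\|_2 \leq \tfrac{1}{2} - \rho k_{**}$ for $\rho$ small. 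For the residual, I would mimic the splitting-of-the-identity argument from Lemma~\ref{lemma_contraction}: distribute $\tfrac{1}{2} I$ as $N'$ copies of $\tfrac{1}{2N'} I$, one for each of the $N'$ summands of $\rho\, \mathcal{R} A_1^{-1}$, and apply Lemma~\ref{lemma_symmetrization} to each resulting piece of the form $\tfrac{1}{2N'} I - \rho\, F \cdot A \cdot A_1^{-1}$ (with $F$ one of the diagonal coefficient matrices and $A$ one of the matrices $I$, $A_i$ for $i \neq 1$, $A_{ii}$, $\widetilde{A}_{ij}$, $\widehat{A}_{ij}$) to conclude that each such piece has operator norm at most $\tfrac{1}{2N'}$. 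Summing yields $\|\tfrac{1}{2} I - \rho\, \mathcal{R} A_1^{-1}\|_2 \leq \tfrac{1}{2}$ and hence the required bound $1 - \rho k_{**}$.

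The main obstacle is to verify the hypothesis of Lemma~\ref{lemma_symmetrization} for each term $F A A_1^{-1}$, namely that it is similar to a symmetric positive definite matrix. This rests on (i) the pairwise commutativity of the central-difference matrices $A_1, A_i, A_{ii}, \widetilde{A}_{ij}, \widehat{A}_{ij}$ on a uniform tensor-product interior grid (with care that the auxiliary boundary condition \eqref{M21_rho_matrix_bc2} preserves this structure, exactly as used in Lemma~\ref{lemma_contraction}), which makes each $A A_1^{-1}$ symmetric positive definite; and (ii) the similarity $F A A_1^{-1} \sim F^{1/2} (A A_1^{-1}) F^{1/2}$, whose right-hand side is symmetric positive definite because $F$ is strictly positive and diagonal. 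The SPD property of each $\widetilde{A}_{ij}$ and $\widehat{A}_{ij}$ on the interior grid is already established in the proof of Lemma~\ref{lemma_contraction} via the transform to diagonal coordinates together with Lemma~\ref{SPD_ordering_lemma}.
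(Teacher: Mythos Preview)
Your strategy differs from the paper's and has a genuine gap at the commutativity step.  The paper does \emph{not} factor out $A_1^{-1}$ and bound an operator norm; instead it writes
\[
D^2_{2h}\widehat{\mathbf W}
=\Bigl(\tfrac12-\rho k_{**}\Bigr)D^2_{2h}\mathbf W
+\sum_{\text{terms}}\Bigl(\tfrac{1}{2N}D^2_{2h}-\rho F A\Bigr)\mathbf W
\]
(with $D^2_{2h}=A_1$) and applies Corollary~\ref{corollary4.2} to each summand.  That corollary gives the \emph{vectorwise} bound $\|(B-\tau FA)\bx\|_2\le\|B\bx\|_2$ for \emph{arbitrary} SPD matrices $A,B,F$; no relationship between $A_1$ and the other difference matrices is needed.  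The paper's key new tool for this lemma is precisely Corollary~\ref{corollary4.2} (not Lemma~\ref{lemma_symmetrization}), and this is what allows the argument to go through without any commutativity hypothesis.

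Your route, by contrast, needs $AA_1^{-1}$ to be SPD for each summand in order to invoke Lemma~\ref{lemma_symmetrization}, and you rely on pairwise commutativity of $A_1$ with each of $A_i,A_{ii},\widetilde A_{ij},\widehat A_{ij}$.  This fails for the mixed second-order matrices $\widetilde A_{1j}$ and $\widehat A_{1j}$.  Indeed, $\widetilde A_{1j}=\tfrac12(D_1^+\otimes D_j^+ + D_1^-\otimes D_j^-)$, so commutativity with $A_1=T_1\otimes I$ would force $T_1D_1^{\pm}=D_1^{\pm}T_1$ in the one-dimensional factor.  A direct computation with three interior nodes (homogeneous Dirichlet data plus odd-reflection ghost values, exactly the situation created by \eqref{M21_rho_matrix_bc1}--\eqref{M21_rho_matrix_bc2} for $W$) shows $T_1D_1^{+}\neq D_1^{+}T_1$, so $\widetilde A_{1j}A_1^{-1}$ is not even symmetric.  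Thus your appeal to Lemma~\ref{lemma_symmetrization} breaks down for those terms, and the operator-norm bound $\|I-\rho\mathcal G A_1^{-1}\|_2\le 1-\rho k_{**}$ cannot be established by your splitting.  The fix is to abandon the $A_1^{-1}$ factorization and work vectorwise with Corollary~\ref{corollary4.2}, as the paper does.
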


\begin{proof}
Let $W\equiv V-U$ and $\widehat{W}\equiv \widehat{V}-\widehat{U}$.  
Then, $W_\alpha = \widehat{W}_\alpha = 0$ for all $\mathbf{x}_\alpha \in \partial \Omega$, 
and by the Mean Value Theorem, there holds 
{\small
\begin{align} \label{V_U2} 
	&- \delta_{x_1, 2h_1}^2 \widehat{W}_\alpha 
	= - \delta_{x_1, 2h_1}^2 W_\alpha - \rho \left( \hF[V_\alpha, \bx_\alpha] - \hF[U_\alpha, \bx_\alpha] \right) \\ 
	\nonumber & \, = -(1 - \rho k_{**})\delta_{x_1, 2h_1}^2 W_\alpha 
		- \rho \Bigl( \frac{\partial F}{\partial u_{x_1 x_1}} + k_{**} \Bigr) \delta_{x_1, 2h_1}^2 W_\alpha \\ 
		\nonumber & \quad 
		- \rho \sum_{i=2}^d \frac{\partial F}{\partial u_{x_i x_i}} \delta_{x_i, 2h_i}^2 W_\alpha  
		- \rho \frac{\partial F}{\partial u} W_\alpha 
		- \rho \gamma \sum_{i=1}^d \Bigl( \Bigl[ \widetilde{D}_{\bh}^2 W_\alpha \Bigr]_{ii} 
				- \Bigl[ \widehat{D}_{\bh}^2 W_\alpha \bigr]_{ii} \Bigr) \\ 
			\nonumber & \quad 
		- \rho \sum_{i=1}^d \sum_{j=1 \atop j \neq i}^d \Bigl( \frac12 \frac{\partial F}{\partial u_{x_i x_j}} + \gamma \Bigr) 
			\Bigl[ \widetilde{D}_{\bh}^2 W_\alpha \Bigr]_{ij} 
		- \rho \sum_{i=1}^d \sum_{j=1 \atop j \neq i}^d \Bigl( \gamma - \frac12 \frac{\partial F}{\partial u_{x_i x_j}} \Bigr) 
			\Bigl[ -\widehat{D}_{\bh}^2 W_\alpha \Bigr]_{ij} \\ 
	\nonumber &\, \equiv 
	-(1-\rho k_{**}) \delta_{x_1, 2h_1}^2 W_\alpha - \rho G'_\alpha W_\alpha 
\end{align} 
}
for a linear operator $G'_\alpha$ which depends on $U$, $V$. 

Let $\widehat{\mathbf{W}} , \mathbf{W} \in \mathbb{R}^{J_0}$ denote the vectorization of the grid functions 
$\widehat{W}$ and $W$ restricted to $\cT_{\bh} \cap \Omega$, respectively.  
Let $D_{2h}^2$ denote the matrix representation of the operator $-\delta_{x_1, 2h_1}^2 W_\alpha$
complemented by the boundary condition $\eqref{M21_rho_matrix_bc1}$. 
Then, using the notation in the proof of Lemma~\ref{lemma_contraction} to rewrite \eqref{V_U2}, we have  
\begin{align} \label{GD2h} 
	D^2_{2h} \widehat{\mathbf{W}} 	= G \bf{W},
\end{align}
where
\begin{align*}
	G & = \left(1-\rho k_{**} \right) D^2_{2h}  
		- \rho \left( F_1 - k_{**} I \right) D^2_{2h} \\ 
		\nonumber & \quad 
		- \rho \sum_{i=2}^d F_i A_i   
		- \rho F_0  
		- \rho \gamma \sum_{i=1}^d A_{ii}  
		- \rho \sum_{i=1}^d \sum_{j=1 \atop j \neq i}^d 
			\left( \widetilde{F}_{ij} \widetilde{A}_{ij} + \widehat{F}_{ij} \widehat{A}_{ij}  \right) \\ 
	& = \left( \frac12 - \rho k_{**} \right) D^2_{2h} 
		+ \left( \frac{1}{2N} I - \rho \left( F_1 - k_{**} I \right) \right) D^2_{2h} \\ 
		\nonumber & \quad 
		+ \sum_{i=2}^d \left( \frac{1}{2N} D^2_{2h} - \rho F_i A_i \right) 
		+ \left( \frac{1}{2N} D^2_{2h} - \rho F_0 \right) 
		+ \sum_{i=1}^d \left( \frac{1}{2N} D^2_{2h} - \rho \gamma A_{ii} \right) \\ 
		\nonumber & \quad  
	+ \sum_{i=1}^d \sum_{j=1 \atop j \neq i}^d \left( \frac{1}{2N} D^2_{2h} - \rho \widetilde{F}_{ij} \widetilde{A}_{ij} \right) 
	+ \sum_{i=1}^d \sum_{j=1 \atop j \neq i}^d \left( \frac{1}{2N} D^2_{2h} - \rho \widehat{F}_{ij} \widehat{A}_{ij} \right) 
\end{align*}
for $N = 1 + 2d + 2d(d-1)$. 
Thus, by Corollary~\ref{corollary4.2} with $B = D^2_{2h}$, there holds 
\begin{align*}
	\| G \mathbf{W} \|_2 
	& \leq \left( \frac12 - \rho k_{**} \right) \| D^2_{2h} \mathbf{W} \|_2 
		+ N \frac{1}{2N} \| D^2_{2h} \mathbf{W} \|_2 
	=  \left( 1 - \rho k_{**} \right) \| D^2_{2h} \mathbf{W} \|_2 
\end{align*}
for $\rho > 0$ sufficiently small. 
Taking the $\ell^2$ norm of both sides of \eqref{GD2h}, it follows that  
\[
	\| D^2_{2h} \widehat{\mathbf{W}} \|_2 \leq \left( 1 - \rho k_{**} \right) \| D^2_{2h} \mathbf{W} \|_2 . 
\]
The proof is complete.  
\hfill 
\end{proof}

\begin{lemma} \label{lemma_stable2}
Suppose the operator $F$ in \eqref{F_D2} 
is proper elliptic and Lipschitz continuous with respect to its first two arguments 
and $k_{**} > 0$,  
where $k_{**}$ denotes the lower ellipticity constant with respect to the second order variable.  
Let $U$ be the unique solution to the finite difference scheme defined by 
\eqref{FD_method}--\eqref{FD_auxiliaryBC} and \eqref{Abeta} for problem \eqref{F_D2} 
with $\gamma > K^{**}/2$ and $\beta = 0$. 
Then, 
\[
	\Bigl( \prod_{i=1,2,\ldots,d} h_i^{\frac12} \Bigr) \left\| \delta_{x_1, 2h_1}^2 U \right\|_{\ell^2(\cT_{\mathbf{h}} \cap \Omega)} 
	\leq C , 
\]
where $C$ is a positive constant independent of $\mathbf{h}$ 
that depends on $\Omega$, the lower ellipticity constant $k_{**}$, and 
$\| F(0,0,\cdot) \|_{C^0(\overline{\Omega})}$. 
\end{lemma}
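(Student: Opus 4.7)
The plan is to mimic the proof of Theorem~\ref{prop_stability}, substituting the second-order contraction estimate of Lemma~\ref{lemma_contraction2} for the zeroth-order contraction of Lemma~\ref{lemma_contraction}. The fixed-point identity $\widehat{U}=U$ will convert a one-step contraction inequality into a self-bounding estimate for $\|\delta_{x_1,2h_1}^2 U\|_{\ell^2}$, and the only explicit quantity appearing on the right will be $F(0,0,\cdot)$ evaluated on the interior grid.

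Concretely, first choose $\rho>0$ small enough that Lemma~\ref{lemma_contraction2} is in force, set $\theta := 1-\rho k_{**}\in(0,1)$, and pick the test function $V\equiv 0\in S(\cT_{\bh}')$. Writing $\widehat{0}:=\cM_{1,\rho}(0)$ and using that $U$ is a fixed point of $\cM_{1,\rho}$, Lemma~\ref{lemma_contraction2} yields
\begin{equation*}
\|\delta_{x_1,2h_1}^2(U-\widehat{0})\|_{\ell^2(\cT_{\bh}\cap\Omega)}
\le \theta\,\|\delta_{x_1,2h_1}^2 U\|_{\ell^2(\cT_{\bh}\cap\Omega)} .
\end{equation*}
A triangle inequality and absorption then give
$\rho k_{**}\,\|\delta_{x_1,2h_1}^2 U\|_{\ell^2(\cT_{\bh}\cap\Omega)}
\le \|\delta_{x_1,2h_1}^2 \widehat{0}\|_{\ell^2(\cT_{\bh}\cap\Omega)}$.
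Next, I evaluate $\widehat{0}$ from its defining equations \eqref{M21_rho_matrix}. Since every discrete gradient and Hessian annihilates the zero grid function, $\hF[0,\bx_\alpha]$ collapses to the data value $F(0,0,\bx_\alpha)$, so \eqref{M21_rho_interior} reads $-\delta_{x_1,2h_1}^2\widehat{0}_\alpha=-\rho F(0,0,\bx_\alpha)$. Consequently $\|\delta_{x_1,2h_1}^2 \widehat{0}\|_{\ell^2(\cT_{\bh}\cap\Omega)}\le \rho\,\|F(0,0,\cdot)\|_{\ell^2(\cT_{\bh}\cap\Omega)}$; plugging this in cancels $\rho$ and leaves $\|\delta_{x_1,2h_1}^2 U\|_{\ell^2(\cT_{\bh}\cap\Omega)}\le k_{**}^{-1}\|F(0,0,\cdot)\|_{\ell^2(\cT_{\bh}\cap\Omega)}$. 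Finally, multiplying by $\prod_i h_i^{1/2}$ converts the right-hand $\ell^2$ sum into a Riemann sum bounded by $|\Omega|^{1/2}\|F(0,0,\cdot)\|_{C^0(\overline{\Omega})}$ (up to a universal constant), which delivers the asserted estimate.

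The main point to check is the boundary bookkeeping behind the absorption step. The test function $V\equiv 0$ does not satisfy the Dirichlet condition that $U$ does, so a priori $U-V$ is nonzero on $\partial\Omega$; nevertheless Lemma~\ref{lemma_contraction2} measures only the output pair $\widehat{U},\widehat{V}$, both of which are forced by \eqref{M21_rho_matrix_bc1}--\eqref{M21_rho_matrix_bc2} to carry identical Dirichlet and auxiliary data. Hence $U-\widehat{0}$ vanishes on $\cT_{\bh}\cap\partial\Omega$ and its second-order difference on $\cT_{\bh}\cap\Omega$ is unambiguous, so the contraction applies cleanly. The expected obstacle is purely analytic rather than structural: the final constant scales like $k_{**}^{-1}$, so the argument intrinsically requires the hypothesis $k_{**}>0$ (unlike Theorem~\ref{prop_stability}, where $k_0>0$ also suffices), and this is what makes the present lemma a statement about nondegenerately second-order elliptic $F$.
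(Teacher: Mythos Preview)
Your proposal is correct and follows essentially the same argument as the paper: apply Lemma~\ref{lemma_contraction2} with the fixed point $U$ and the comparison input $V\equiv 0$, use the triangle inequality to absorb, and then read off $\delta_{x_1,2h_1}^2\widehat{0}$ directly from \eqref{M21_rho_interior} as $-\rho F(0,0,\cdot)$ before scaling by $\prod_i h_i^{1/2}$. Your added paragraph on boundary bookkeeping (noting that $\widehat{U}$ and $\widehat{0}$ share the Dirichlet and auxiliary data so that $U-\widehat{0}$ vanishes on $\cT_{\bh}\cap\partial\Omega$) is a point the paper leaves implicit, and your remark that the argument genuinely needs $k_{**}>0$ is accurate.
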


\begin{proof}
Let $\cM^2_{1,\rho}$ be the mapping defined by \eqref{M21_rho} 
and $\rho > 0$ sufficiently small to ensure $\cM^2_{1,\rho}$ is a contraction.    
Let  $\widehat{0}$ denote the image of the input $0$ under the mapping, that is, 
$\widehat{0}$ satisfies 
\begin{alignat}{2}\label{zero}
	- \delta_{x_1, 2h_1}^2 \widehat{0}_\alpha
	& =  - \rho F (0,0,\mathbf{x}_\alpha)  \qquad  
		&& \text{for all } \mathbf{x}_\alpha \in \cT_{\bh} \cap \Omega  . 
\end{alignat} 
Then, by Lemma~\ref{lemma_contraction2}, 
for $U$ the solution to the proposed FD scheme, there holds 
\begin{align*} 
	\left\| \delta_{x_1, 2h_1}^2 U \right\|_{\ell^2(\cT_{\mathbf{h}} \cap \Omega)}
	 &\leq \left\| \delta_{x_1, 2h_1}^2 (U-  \widehat{0}) \right\|_{\ell^2(\cT_{\mathbf{h}} \cap \Omega)} + \| \delta_{x_1, 2h_1}^2 \widehat{0} \|_{\ell^2(\cT_{\mathbf{h}} \cap \Omega)} \\
	 &\leq (1-\rho k_{**})  \left\| \delta_{x_1, 2h_1}^2 (U-0)\right\|_{\ell^2(\cT_{\mathbf{h}} \cap \Omega)} 
	 + \| \delta_{x_1, 2h_1}^2 \widehat{0} \|_{\ell^2(\cT_{\mathbf{h}} \cap \Omega)} .
\end{align*} 
Hence,
\begin{equation} \label{ubound0-2}
	\left\| \delta_{x_1, 2h_1}^2 U \right\|_{\ell^2(\cT_{\mathbf{h}} \cap \Omega)} \leq 
	\frac{1}{\rho k_{**}} \| \delta_{x_1, 2h_1}^2 \widehat{0} \|_{\ell^2(\cT_{\mathbf{h}} \cap \Omega)} . 
\end{equation} 

It follows from \eqref{zero} that 
\[
	\| \delta_{x_1, 2h_1}^2 \widehat{0} \|_{\ell^2(\cT_{\mathbf{h}} \cap \Omega)} 
	\leq 
	\rho \| F(0,0,\cdot) \|_{\ell^2(\cT_{\mathbf{h}} \cap \Omega)} . 
\] 
Plugging this into \eqref{ubound0-2} yields 
\begin{align*}
	\Bigl( \prod_{i=1,2,\ldots,d} h_i^{\frac12} \Bigr) \left\| \delta_{x_1, 2h_1}^2 U \right\|_{\ell^2(\cT_{\mathbf{h}} \cap \Omega)} 
	& \leq \frac{ | \Omega |^d}{k_{**}} \| F(0,0,\cdot) \|_{C^0(\overline{\Omega})} .
\end{align*}
The proof is complete. 
\hfill 
\end{proof}

Clearly the analysis in this section can be extended to uniformly bound $\delta_{x_i, 2h_i}^2 U$ 
for all $i=1,2,\ldots,d$ as reflected in the following theorem. 

\begin{theorem} \label{thm_stable2}
Under the assumptions of Lemma \ref{lemma_stable2}, there holds
\[
	\Bigl( \prod_{i=1,2,\ldots,d} h_i^{\frac12} \Bigr) \left\| \delta_{x_\ell, 2h_\ell}^2 U \right\|_{\ell^2(\cT_{\mathbf{h}} \cap \Omega)} 
	\leq C 
\]
for all $\ell=1,2,\ldots,d$, 
where $C$ is a positive constant independent of $\mathbf{h}$ 
that depends on $\Omega$, the lower ellipticity constant $k_{**}$, and 
$\| F(0,0,\cdot) \|_{C^0(\overline{\Omega})}$. 
\end{theorem}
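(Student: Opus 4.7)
The plan is to observe that in Lemma~\ref{lemma_stable2} the distinguished role of the index $1$ in the coordinate direction $x_1$ was purely notational: the ellipticity bounds $-K_{ii} \leq \partial F / \partial u_{x_i x_i} \leq -k_{**}$ and the symmetric positive definite structure of the difference operator matrices used in the proof of Lemma~\ref{lemma_contraction2} are all symmetric in the coordinate indices $i = 1, 2, \ldots, d$. Consequently, the same fixed-point / contraction argument should run verbatim with any $\ell \in \{1,2,\ldots,d\}$ in place of $1$, and the conclusion of Lemma~\ref{lemma_stable2} will transfer to each $\delta_{x_\ell, 2 h_\ell}^2 U$.

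Concretely, for each fixed $\ell$, I would introduce the analogue $\cM_{\ell,\rho}$ of the mapping in \eqref{M21_rho}--\eqref{M21_rho_matrix}, in which the interior equation \eqref{M21_rho_interior} is replaced by
\[
  - \delta_{x_\ell, 2 h_\ell}^2 \widehat{U}_\alpha
  = - \delta_{x_\ell, 2 h_\ell}^2 U_\alpha - \rho \hF[U_\alpha, \bx_\alpha]
  \qquad \text{if } \bx_\alpha \in \cT_{\bh} \cap \Omega,
\]
with the same boundary and auxiliary boundary conditions as in \eqref{M21_rho_matrix_bc1}--\eqref{M21_rho_matrix_bc2}. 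Fixed points of $\cM_{\ell,\rho}$ again coincide with solutions of \eqref{FD_method}--\eqref{FD_auxiliaryBC}, so the unique solution $U$ produced by Theorem~\ref{admissible_prop} is the fixed point of $\cM_{\ell,\rho}$ for every $\ell$.

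Next I would redo the matrix calculation of Lemma~\ref{lemma_contraction2}, this time splitting off the $\ell$th diagonal term: since $F_\ell - k_{**} I \geq 0$ by the lower ellipticity bound, the argument combining Lemma~\ref{lemma_symmetrization} with Corollary~\ref{corollary4.2} (applied with $B = D^2_{2 h_\ell}$, the matrix representation of $-\delta_{x_\ell, 2h_\ell}^2$ complemented by the boundary conditions) yields
\[
  \bigl\| \delta_{x_\ell, 2 h_\ell}^2 (\widehat{U} - \widehat{V}) \bigr\|_{\ell^2(\cT_{\bh} \cap \Omega)}
  \leq (1 - \rho k_{**}) \bigl\| \delta_{x_\ell, 2 h_\ell}^2 (U - V) \bigr\|_{\ell^2(\cT_{\bh} \cap \Omega)}
\]
for all sufficiently small $\rho > 0$. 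Taking $V = 0$ and estimating $\|\delta_{x_\ell, 2h_\ell}^2 \widehat{0}\|_{\ell^2(\cT_{\bh} \cap \Omega)}$ exactly as in the proof of Lemma~\ref{lemma_stable2} then gives the stated bound with the same constant $C$ (depending only on $\Omega$, $k_{**}$, and $\|F(0,0,\cdot)\|_{C^0(\overline{\Omega})}$) for each $\ell$.

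The main point requiring verification---and essentially the only nontrivial step---is that the matrix identities and the symmetric positive definite properties used in the proofs of Lemma~\ref{lemma_contraction} and Lemma~\ref{lemma_contraction2} are genuinely symmetric in $\ell$. This amounts to checking that $B_i + B_j$ and $B_{\eta,ij} + B_{\xi,ij}$ are still related by the orthogonal coordinate change that swaps any singled-out direction with the diagonal directions, and that the bound on the $A_{ii}$ from below by $c h_*^2 I$ is index-free. Both hold because the uniform grid and the ellipticity hypothesis treat all coordinate directions on equal footing; no secret dependence on the direction $x_1$ was introduced in the original proof.
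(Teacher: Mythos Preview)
Your proposal is correct and is exactly the approach the paper takes: the paper simply remarks that the analysis for $\ell=1$ extends verbatim to every coordinate index, and your write-up spells out precisely the symmetry considerations needed to justify this.
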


\subsection{$\ell^\infty$-norm stability} \label{c0-stability_sec}
 
We now derive an $\ell^\infty$-norm stability estimate for  the numerical 
solution $U$ using the $\ell^\infty$-norm estimate for $U$ restricted to the boundary in Theorem~\ref{prop_stability} 
and the (high-order) $\ell^2$-norm stability estimate obtained in Theorem~\ref{thm_stable2}. 
Such an $\ell^\infty$-norm stability estimate is vital for our convergence analysis to be given in Section~\ref{conv_proof_sec}. 
This result also has an independent interest because it can be regarded as a discrete Sobolev embedding result which 
holds for all grid functions that satisfy the two stability estimates of Theorems \ref{prop_stability} and \ref{thm_stable2} 
as well as the Dirichlet boundary condition.

We first define a piecewise constant extension $u_{\mathbf{h}}$ for a given grid function 
$U \in S(\cT_{\mathbf{h}}')$ 
to be used in the proof of the stability theorem.  
Let $\alpha \in \mathbb{N}_J'$, and define $B_\alpha$ by 
\[
	B_\alpha \equiv \prod_{i=1,2,\ldots,d} 
		\big( \mathbf{x}_\alpha-\frac{h_i}2  { \mathbf{e}_i} , \mathbf{x}_\alpha+ \frac{h_i}2 { \mathbf{e}_i}  \big] , 
\] 
where $\{\mathbf{e}_i\}_{i=1}^d$ denotes the 
canonical basis for $\mathbb{R}^d$.  
We define the piecewise constant extension $u_{\mathbf{h}}$ of a grid function $U$ by 
\begin{equation} \label{FD_extension_nd}
	u_{\mathbf{h}} (\mathbf{x}) \equiv 
	U_\alpha , \qquad \mathbf{x} \in B_\alpha   
\end{equation} 
for all $\mathbf{x} \in \Omega' \equiv \cup_{\alpha \in \mathbb{N}_J'} B_\alpha \supset \overline{\Omega}$.  

To motivate the proof, assume $u_{\bh_k} \to v$ pointwise.  By the uniform $L^2$ stability of $u_{\bh_k}$ 
guaranteed by Theorem~\ref{prop_stability}, we have $v \in L^2(\Omega)$.  
Suppose $v \in H^2(\Omega)$.  Then $v \in C^0(\overline{\Omega})$ for $d=1,2,3$ by the Sobolev embedding theorem.  
In order to prove the result using only the discrete high-order stability of $u_{\bh_k}$ guaranteed by 
Theorem~\ref{thm_stable2}, we assume the pointwise limit function $v$ is discontinuous and arrive at a contradiction.  
Such a contradiction arises because if $v$ is discontinuous at $\bx_0$, 
we can choose a sequence $\bx_k \to \bx_0$ such that $u_{\bh_k} (\bx_k)$ has a similar ``jump" with 
\[
	C_2 \geq \| \delta_{x_\ell, 2 h_\ell}^2 u_{\bh_k} \|_{L^2(\Omega)} 
	\geq h_\ell^{d/2} \left| \delta_{x_\ell, 2 h_\ell}^2 u_{\bh_k} (\bx_k) \right| = \mathcal{O}(h_\ell^{d/2-2}) \to \infty 
\]
for $d = 1,2,3$ whenever $h_\ell = \min h_j$ for all $k$.  
Thus, $v \in C^0(\overline{\Omega})$, and it follows that $v$ is uniformly bounded.  
Consequently, $u_{\bh_k}$ cannot be unbounded as $k \to \infty$.  
The key to the proof is constructing the correct pointwise limit function $v$.  

\begin{theorem} \label{thm_stability} 
Under the assumptions of Lemma \ref{lemma_stable2}, the numerical 
solution $U$ is stable in the $\ell^\infty$-norm for $d\leq 3$; that is, $U$ satisfies
\[
	\left\| U \right\|_{\ell^\infty(\cT_{\mathbf{h}})} 
	\leq C 
\]
for $d\leq 3$, where $C$ is a positive constant independent of $\mathbf{h}$.  
\end{theorem}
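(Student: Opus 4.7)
The plan is to argue by contradiction, rescaling the putative blow-up to a fixed normalization and then establishing a discrete analog of the Sobolev embedding $H^2(\Omega)\hookrightarrow C^0(\overline{\Omega})$ which holds for $d\leq 3$. Suppose the conclusion fails: along a sequence $\bh_k\to 0$ the solutions satisfy $M_k:=\|U^{(k)}\|_{\ell^\infty(\cT_{\bh_k})}\to\infty$. Set $V^{(k)}:=U^{(k)}/M_k$; then $\|V^{(k)}\|_{\ell^\infty}=1$, attained at some interior node $\bx_k^\star\in\cT_{\bh_k}\cap\Omega$ (since the boundary values equal $g/M_k\to 0$). Dividing the bounds of Theorems~\ref{prop_stability} and~\ref{thm_stable2} by $M_k$ yields
\[
\Bigl(\prod_{i=1}^d h_{k,i}^{1/2}\Bigr)\Bigl(\|V^{(k)}\|_{\ell^2(\cT_{\bh_k}\cap\Omega)}+\sum_{\ell=1}^d\|\delta_{x_\ell,2h_\ell}^2 V^{(k)}\|_{\ell^2(\cT_{\bh_k}\cap\Omega)}\Bigr)+\|V^{(k)}\|_{\ell^\infty(\cT_{\bh_k}\cap\partial\Omega)}\;\leq\;\frac{C}{M_k}\;\to\;0.
\]

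Next I would pass to the piecewise constant extensions $v_k$ defined by \eqref{FD_extension_nd}, for which the weighted $\ell^2$ quantities above coincide with genuine $L^2(\Omega)$ norms. Thus $\|v_k\|_{L^2(\Omega)}\to 0$ and $\|\delta_{x_\ell,2h_\ell}^2 v_k\|_{L^2(\Omega)}\to 0$ for every $\ell$, while by construction $\|v_k\|_{L^\infty(\Omega)}=1$. The key intermediate claim is a discrete Sobolev inequality
\[
\|v_k\|_{L^\infty(\Omega)}\;\leq\;C\Bigl(\|v_k\|_{L^2(\Omega)}+\sum_{\ell=1}^d\|\delta_{x_\ell,2h_\ell}^2 v_k\|_{L^2(\Omega)}+\|V^{(k)}\|_{\ell^\infty(\cT_{\bh_k}\cap\partial\Omega)}\Bigr)
\]
valid for $d\leq 3$ with $C$ independent of $\bh_k$; granting it, the right-hand side tends to $0$ while the left-hand side equals $1$, giving the desired contradiction.

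The heart of the argument is establishing this discrete Sobolev inequality, and this is where the restriction $d\leq 3$ enters. My plan is to construct a globally $H^2$-conforming interpolant $\widetilde{v}_k$ of $V^{(k)}$ on $\Omega$ — for instance a tensor-product cubic Hermite interpolant whose nodal values are $V^{(k)}_\alpha$, with nodal first and second derivatives taken from the central discrete quotients of $V^{(k)}$, and with the auxiliary ghost condition $\Delta_{\bh}V^{(k)}=0$ on $\mathcal{S}_{\bh}$ used to extend the second-derivative data near $\partial\Omega$. Standard interpolation estimates then bound $\|\widetilde{v}_k\|_{H^2(\Omega)}$ by the discrete quantities on the right, the continuous embedding $H^2(\Omega)\hookrightarrow C^0(\overline{\Omega})$ for $d\leq 3$ gives $\|\widetilde{v}_k\|_{L^\infty(\overline{\Omega})}\leq C\|\widetilde{v}_k\|_{H^2(\Omega)}$, and nodal evaluation recovers $\|v_k\|_{L^\infty(\Omega)}=\max_\alpha|V^{(k)}_\alpha|=|\widetilde{v}_k(\bx_k^\star)|\leq\|\widetilde{v}_k\|_{L^\infty(\overline{\Omega})}$. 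I expect the main technical obstacle to be matching the interpolant's continuous second derivatives to the discrete second-difference quotients in an $L^2$-consistent way uniform in $\bh_k$, and correctly handling the one-sided stencils near $\partial\Omega$ where the auxiliary ghost condition enters; a viable alternative is to replace the explicit interpolation by a soft compactness argument, extracting a weak $L^2$ limit $v\in H^2(\Omega)$ of $v_k$ via discrete summation-by-parts against $\varphi\in C_c^\infty(\Omega)$ and then upgrading weak convergence to uniform convergence via Rellich compactness applied to the interpolants.
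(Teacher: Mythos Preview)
Your overall strategy—rescale and invoke a discrete Sobolev inequality—is natural and genuinely different from the paper's proof, which never constructs an interpolant. The paper argues pointwise: it selects the coordinate direction $x_{i^*}$ with $h_{i^*}^{(k)}=\min_j h_j^{(k)}$, defines an upper-semicontinuous limit $v$ of $u_{\bh_k}$ along that direction, and shows that any jump of $v$ in the $x_{i^*}$ direction would force $\bigl|\delta_{x_{i^*},2h_{i^*}^{(k)}}^2 u_{\bh_k}\bigr|\geq c\,(h_{i^*}^{(k)})^{-2}$ at some interior node; combined with Theorem~\ref{thm_stable2} this gives $C_2\geq \tfrac{c}{8}\,(h_{i^*}^{(k)})^{d/2-2}\to\infty$ for $d\le3$, a contradiction. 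Hence $v$ is continuous on the line segment through $\bx_0$ parallel to $\mathbf{e}_{i^*}$ and therefore bounded there, contradicting $v(\bx_0)=\infty$. This one-directional reduction sidesteps any need for mixed second differences or global function-space machinery.

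Your route has a concrete gap at the interpolation step. Theorem~\ref{thm_stable2} controls only the \emph{pure} second differences $\delta_{x_\ell,2h_\ell}^2 V^{(k)}$ in weighted $\ell^2$; no estimate on the mixed differences $\delta_{x_i}\delta_{x_j}V^{(k)}$ is available. A tensor-product Hermite interpolant $\widetilde v_k$ has mixed second derivatives $\partial_{x_i x_j}^2\widetilde v_k$ that depend on mixed difference quotients of the nodal data, so ``standard interpolation estimates'' will not bound $\|\widetilde v_k\|_{H^2(\Omega)}$ by the right-hand side you wrote down. To make your approach work you would need an \emph{anisotropic} embedding $\{w\in L^2(\Omega):\partial_{x_i}^2 w\in L^2(\Omega)\text{ for all }i\}\hookrightarrow C^0(\overline\Omega)$ for $d\le3$ on a rectangle (this is true—e.g.\ by odd reflection to a torus and noting $1+\sum_i k_i^4\geq c(1+|k|^2)^2$—but it is an additional ingredient you have not identified). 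Your ``soft compactness'' alternative has the same defect: summation by parts against $\varphi\in C_c^\infty(\Omega)$ lets you transfer only the pure derivatives $\partial_{x_i}^2$ to $\varphi$, so the weak limit lies only in that anisotropic space, not a priori in $H^2(\Omega)$, and Rellich does not apply as stated.
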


\begin{proof}  
To show that $U$ is uniformly bounded independent of $\bh$, we will use a proof by contradiction.  
Suppose there exists a sequence $\bh_{k}$ such that $h_{k} \to 0$ and 
$\left\| U^{(k)} \right\|_{\ell^\infty(\cT_{\mathbf{h}_{k}})} \to \infty$.  
Let $u_{\bh_k}$ denote the piecewise constant 
extension of the grid function $U^{(k)}$ defined by \eqref{FD_extension_nd}.  
Then, there exists a point $\bx_0 \in \overline{\Omega}$ and a sequence $\bx_k \in \cT_{\bh_k}$ 
such that $\bx_k \to \bx_0$ and $\left| u_{\bh_k}(\bx_k) \right| \to \infty$.  
Furthermore, since $g \in C^0(\partial \Omega)$ and $u_{\bh_k}(\bx) = g(\bx)$ for all $\bx \in \cT_{\bh_k} \setminus \Omega$, 
we choose a subsequence such that $\bx_k$ does not correspond to a boundary node for all $k$.  
In the following assume $u_{\bh_k}(\bx_k) \to + \infty$.  
The proof can easily be modified for $u_{\bh_k}(\bx_k) \to - \infty$.  

Define the sequence $i_k \subset \{ 1,2,\ldots,d \}$ such that 
$h_{i_k}^{(k)} = \min_j h_j^{(k)}$ for all $k$.  
Choose subsequences such that $i_k = i^*$ for some $i^* \in \{1,2,\ldots,d\}$ for all $k$.  
Let $\widetilde{g} \in C(\mathbb{R}^d)$ such that $\widetilde{g} \big|_{\partial \Omega} = g$, 
$\widetilde{U}^{(k)}$ denote the extension of $U^{(k)}$ to a grid over $\mathbb{R}^d$ that contains $\cT_{\bh_k}$ 
with $\widetilde{U}^{(k)}_\alpha = \widetilde{g}(\bx_\alpha)$ for all $\bx_\alpha \notin \cT_{\bh_k} \cap \Omega$, 
and $\widetilde{u}_{\bh_k}$ denote its corresponding piecewise constant extension.  
Define the function $v : \overline{\Omega} \to \overline{\mathbb{R}}$ by 
\begin{equation} \label{v_USC}
	v(\bx) = \limsup_{k \to \infty \atop \sigma \to 0} \widetilde{u}_{\bh_k} \left( \bx + (\bx_k - \bx_0) + \sigma \mathbf{e}_{i^*}  \right) , 
\end{equation}
where we have restricted the paths to only vary along the $x_{i^*}$ direction.  
Then, $v$ is upper semi-continuous along the $x_{i^*}$ direction.  
Since $\bx_k \in \Omega$ for all $k$, there holds 
$v(\bx_0) = \infty$ since $\widetilde{u}_{\bh_k}(\bx_k) = u_{\bh_k}(\bx_k) \to \infty$.  

Suppose there exists $\bx^* \in \overline{\Omega}$ such that $v$ is discontinuous at $\bx^*$ along the $x_{i^*}$ direction, 
i.e., 
\[
	v(\bx^*) \neq \lim_{\sigma \to 0^+} v(\bx^* + \sigma \mathbf{e}_{i^*}) 
	\qquad \text{or} \qquad 
	v(\bx^*) \neq \lim_{\sigma \to 0^-} v(\bx^* + \sigma \mathbf{e}_{i^*}) . 
\]  
We consider two cases based on whether $\bx^* \in \Omega$ or $\bx^* \in \partial \Omega$.  

\underline{Case 1}:  $\bx^* \in \Omega$.  
Then, by the definition of $v$ in \eqref{v_USC}, 
there exists a sequence $\sigma_k \to 0$ and a constant $c > 0$ such that 
\begin{subequations}
\begin{align}
	& u_{\bh_k} \left( \bx + (\bx_k - \bx_0) + \sigma_k \mathbf{e}_{i^*}  \right) \to v (\bx^*) , \\ 
	& \left( 2 h_{i^*}^{(k)} \right)^2 \delta_{x_{i^*}, 2 h_{i^*}^{(k)}}^2 u_{\bh_k} \left( \bx + (\bx_k - \bx_0) + \sigma_k \mathbf{e}_{i^*}  \right) \to -c 
\end{align}
\end{subequations}
since 
$$\limsup_{k \to \infty} \Big[ 
u_{\bh_k} \bigl( \bx + (\bx_k - \bx_0) + \sigma_k \mathbf{e}_{i^*}  \bigr) 
- u_{\bh_k} \bigl( \bx + (\bx_k - \bx_0) + \sigma_k \mathbf{e}_{i^*} \pm 2 h_{i^*}^{(k)} \mathbf{e}_{i^*} \bigr) \Big] \geq 0$$ 
and at least one choice must be positive if $v$ is discontinuous at $\bx^*$ along the $x_{i^*}$ direction.  
Define $\bz_k = \bx + (\bx_k - \bx_0) + \sigma_k \mathbf{e}_{i^*} \in \Omega$ for $k$ sufficiently large.  
Applying Theorem~\ref{thm_stable2}, there exists a constant $C_2$ independent of $\bh_k$ such that 
\begin{align} \label{H2_contradiction}
	C_2 
	& \geq \Bigl( \prod_{j=1,2,\ldots,d} \sqrt{h_j^{(k)}} \Bigr) 
		\left\| \delta_{x_{i^*}, 2 h_{i^*}^{(k)}}^2 U \right\|_{\ell^2(\cT_{\mathbf{h}_k} \cap \Omega)} \\ 
	\nonumber & \geq \Bigl( h_{i^*}^{(k)} \Bigr)^{d/2}  
		\left\| \delta_{x_{i^*}, 2 h_{i^*}^{(k)}}^2 U \right\|_{\ell^2(\cT_{\mathbf{h}_k} \cap \Omega)} \\ 
	\nonumber & \geq \Bigl( h_{i^*}^{(k)} \Bigr)^{d/2}  
		\left| \delta_{x_{i^*}, 2 h_{i^*}^{(k)}}^2 u_{\bh_k}(\bz_k) \right| \\ 
	\nonumber & \geq  \frac{c}8 \Bigl( h_{i^*}^{(k)} \Bigr)^{d/2-2} 
\end{align}
for all $k$ sufficiently large, 
a contradiction when $d \leq 3$ since $\Bigl( h_{i^*}^{(k)} \Bigr)^{d/2-2} \to + \infty$.  

\underline{Case 2}:  $\bx^* \in \partial \Omega$.  
Suppose there exists a sequence $\bz_k \to \bx^*$ such that $\widetilde{u}_{\bh_k}(\bz_k) \to v(\bx^*)$ 
with $\bz_k \in \Omega$ for all $k$.  
Then, the same argument as Case 1 applies leading to a contradiction.  
Thus, we have the only sequences $\bz_k \to \bx^*$ with $\widetilde{u}_{\bh_k}(\bz_k) \to v(\bx^*)$ 
must satisfy $\bz_k \notin \Omega$ for all $k$, 
from which we have $v(\bx^*) = g(\bx^*)$.  

Assume $\bz_k + \sigma \mathbf{e}_{i^*} \notin \Omega$ for all $\sigma \in \mathbb{R}$ and $k > 0$. 
Then, $\widetilde{u}_{\bh_k}(\bz_k + \sigma \mathbf{e}_{i^*}) = \widetilde{g}(\bz_k + \sigma \mathbf{e}_{i^*}) 
\to \widetilde{g}(\bx^* + \sigma \mathbf{e}_{i^*})$ as $k \to \infty$ 
and $ \widetilde{g}(\bx^* + \sigma \mathbf{e}_{i^*}) \to g(\bx^*) = v(\bx^*)$ as $\sigma \to 0$, 
contradicting the assumption that $v$ is discontinuous at $\bx^*$ along the $x_{i^*}$ direction.  
Thus, by the continuity of $\widetilde{g}$, we can assume $\bz_k \in \partial \Omega$ and either 
$\bz_k + 2 h_{i^*}^{(k)} \mathbf{e}_{i^*} \in \Omega$ or $\bz_k - 2 h_{i^*}^{(k)} \mathbf{e}_{i^*} \in \Omega$ 
for $k$ sufficiently large. 

Without a loss of generality, assume $\bz_k + 2 h_{i^*}^{(k)} \mathbf{e}_{i^*} \in \Omega$.  
Then, by the definition of $v$ in \eqref{v_USC} and the fact that no sequence can be used 
that approaches $\bx^*$ from the interior of $\Omega$, 
there holds 
\begin{equation}
	\limsup_{k \to \infty}  u_{\bh_k}(\bz_k + 2 h_{i^*}^{(k)} \mathbf{e}_{i^*} ) < v(\bx^*) . 
\end{equation}
Define $c > 0$ by 
\[
	c = \limsup_{k \to \infty} 
	 \left[ u_{\bh_k}(\bz_k) - u_{\bh_k}(\bz_k + 2 h_{i^*}^{(k)} \mathbf{e}_{i^*} ) \right] .  
\]
Suppose 
\[
	c \neq \limsup_{k \to \infty} 
	 \left[ u_{\bh_k}(\bz_k + 4 h_{i^*}^{(k)} \mathbf{e}_{i^*} ) ) - u_{\bh_k}(\bz_k + 2 h_{i^*}^{(k)} \mathbf{e}_{i^*} ) \right] .  
\]
Then, we have 
\[
	 \left( 2 h_{i^*}^{(k)} \right)^2 \left| \delta_{x_{i^*}, 2 h_{i^*}^{(k)}}^2 u_{\bh_k} \left( \bz_k + 2 h_{i^*}^{(k)} \mathbf{e}_{i^*}   \right) \right| \neq 0 , 
\]
and we can again form a contradiction to the $\ell^2$ stability of $\delta_{x_{i^*}, 2 h_{i^*}^{(k)}}^2 u_{\bh_k}$ for $d \leq 3$ 
using the estimate \eqref{H2_contradiction} with $\bz_k$ replaced by $\bz_k + 2 h_{i^*}^{(k)} \mathbf{e}_{i^*}$.    
Thus, we must have 
\[
	c = \limsup_{k \to \infty} 
	 \left[ u_{\bh_k}(\bz_k + 4 h_{i^*}^{(k)} \mathbf{e}_{i^*} ) ) - u_{\bh_k}(\bz_k + 2 h_{i^*}^{(k)} \mathbf{e}_{i^*} ) \right] .  
\]
Continuing in this fashion, we can show 
\begin{equation}
	c = \limsup_{k \to \infty} 
	 \left[ u_{\bh_k}(\bz_k + (2N_k+2) h_{i^*}^{(k)} \mathbf{e}_{i^*} ) ) - u_{\bh_k}(\bz_k + 2N_k h_{i^*}^{(k)} \mathbf{e}_{i^*} ) \right] 
\end{equation}
for all $N_k$ such that $\bz_k + (2N_k+1) h_{i^*}^{(k)} \mathbf{e}_{i^*} \in \overline{\Omega}$ 
or $\bz_k + (2N_k+2) h_{i^*}^{(k)} \mathbf{e}_{i^*} \in \overline{\Omega}$. 
However, as $k \to \infty$, this would imply $\widetilde{u}_{\bh_k}(\bs_k) \to - \infty$ for some sequence $\bs_k \notin \Omega$ 
with $\bs_k \to \bs \in \partial \Omega$
on the opposite side of the domain from $\bx^*$.  
Note that $\bs_k \in \partial \Omega$ or $\bs_k$ is a ghost point adjacent to $\partial \Omega$.  
Thus, $\widetilde{u}_{\bh_k}(\bs_k) = \widetilde{g}(s_\alpha) \to - \infty$ is a contradiction,   
and it follows that $\bx^* \not\in \partial \Omega$.  

Combining both cases, we have $v$ defined by \eqref{v_USC} must be continuous over $\overline{\Omega}$ 
along the $x_{i^*}$ direction.  
Let the line segment $S \subset \overline{\Omega}$ be defined by 
\[
	S = \left\{ \bx \in \overline{\Omega} \mid \bx = \bx_0 + \sigma \mathbf{e}_{i^*} \text{ for some } \sigma \in \mathbb{R} \right\} . 
\]   
Then $v$ is continuous over $S$, and we have $v$ is uniformly bounded over $S$.  
This is a contradiction to the fact that $v(\bx_0) = \infty$ and $\bx_0 \in S$.   

If $u_{\bh_k}(\bx_k) \to - \infty$, then we can construct $v$ to be lower semi-continuous 
along the $x_{i^*}$ direction by using the $\liminf$ 
in \eqref{v_USC} and arrive at an analogous contradiction in that $v(\bx_0) \neq - \infty$.  
Therefore, we must have $\left\| U \right\|_{\ell^\infty(\cT_{\bh})}$ is uniformly bounded independent of $h$ 
when $d \leq 3$.  
The proof is complete. 
\hfill 
\end{proof}

\section{Convergence of the narrow-stencil finite difference scheme} \label{conv_proof_sec}
The goal of this section is to establish the convergence of the solution to the proposed  
scheme \eqref{FD_method}--\eqref{FD_auxiliaryBC} and \eqref{Abeta} 
to the viscosity solution of \eqref{FD_problem}. Since the scheme is not 
monotone in the sense of \cite{Barles_Souganidis91}, the convergence 
framework therein is not applicable to our scheme. Instead, we
shall give a direct convergence proof which can be regarded as the high
dimensional extension of the 1-D proof given in our early work \cite{Feng_Kao_Lewis13}.  
The new proof is more involved due to the additional difficulty caused by the mixed second 
order derivatives in the Hessian $D^2 u$.

Before stating our convergence theorem and presenting its proof, we first give a wholistic 
account of the proof. 
Let $\overline{u}$ (resp. $\underline{u}$) denote the upper (resp. lower) limit of the sequence $\{ u_{\bh_{k}} \}$ 
(see \eqref{upper_lower_limits} for the precise definition) 
which exists because $u_{\bh_{k}}$ is uniformly bounded in the $L^\infty$-norm.  
Our task is to show that $\overline{u}$ (resp. $\underline{u}$) is a viscosity subsolution (resp. 
supersolution) of problem \eqref{FD_problem}.  
To this end, let $\varphi\in C^2$ be a test function and  $\overline{u}-\varphi$ 
take a local maximum at $\bx_0\in \Ome$ 
(the case $\bx_0\in \p\Ome$ needs to be considered separately to check that the boundary condition is satisfied in the 
viscosity sense).  
We must show that $F(D^2\varphi(\bx_0), \nabla\varphi(\bx_0), \overline{u}(\bx_0), \bx_0)\leq 0$  
using the fact that $\widehat{F}[u_{\bh_{k}},\bx_0]=0$.  
As expected, the difficulty of the proof is caused 
by the loss of the monotonicity (in the Barles-Souganidis sense \cite{Barles_Souganidis91}) of our numerical 
scheme. 
Moreover, the complexity of the proof for the above desired inequality 
depends on the regularity of $\overline{u}$.  
We will consider three cases: (i) $\overline{u}\in C^2$; (ii) $\overline{u}_{x_\ell} \in \mbox{Lip} \setminus C^1$; 
(iii) $\overline{u}_{x_\ell} \not\in \mbox{Lip}$ for some $\ell=1,2,\cdots,d.$.  

Case (i) is easy due to the consistency of the scheme and the ellipticity of $F$.  
The other two cases are more involved.  
The second case (ii) is subtle in that $\nabla \overline{u}$ is Lipschitz but 
$\overline{u} \notin C^2$ 
implying $D^2 \overline{u}$ exists almost everywhere and is bounded 
but there is no guarantee that $D^2 \ou(\bx_0) \leq D^2 \varphi (\bx_0)$.  
The key is to choose a maximizing sequence similar to the Barles-Souganidis proof 
but show that the narrow-stencil scheme still provides sufficient directional resolution without needing a wide-stencil.  
This requires using a strategic interpolation function and 
choosing the the correct path when sending $\bh_{k} \to \mathbf{0}^+$ and $\bx_k \to \bx_0$ 
to ensure that the local grid approximately aligns with the eigenvectors of $H - D^2 \varphi (\bx_0)$ for an appropriate matrix $H$ 
such that $D^2 \ou(\bx_k) \to H$.  

The rest of the proof focuses on case (iii) where 
$\bigl[ \widetilde{D}_{\mathbf{h}_{k}}^2 \overline{u}(\bx_0) ]_{\ell\ell}$  is unbounded for
some $1\leq \ell\leq d$. 
Let $\{u_{\bh_k} \}$ be a corresponding subsequence such that $u_{\bh_{k}} \to \overline{u}$ at $\bx_0$ 
and 
$\bigl[ \widetilde{D}_{\mathbf{h}_k}^2 u_{\bh_k}(\bx_0) ]_{\ell \ell}$ becomes unbounded as $k \to \infty$. 
We then choose subsequences such that $h_\ell^{(k)} << h_j^{(k)}$ for all $j \neq \ell$ ensuring 
$\bigl[ \widetilde{D}_{\mathbf{h}_k}^2 u_{\bh_k}(\bx_0) ]_{\ell \ell} << \bigl[ \widetilde{D}_{\mathbf{h}_k}^2 u_{\bh_k}(\bx_0) ]_{j j}$.  
Using the ellipticity of $F$, we can extract the positive term 
$- k_{**} \bigl[ \widetilde{D}_{\mathbf{h}_k}^2 u_{\bh_k}(\bx_0) ]_{\ell \ell} \to + \infty$.  
Since the term scales as $\left(h_\ell^{(k)} \right)^{-2}$, we expect this term to dominate when $h_\ell^{(k)} << h_j^{(k)}$.  
As such, we use this term to absorb all of the remaining unsigned terms that result when passing from 
$\widehat{F}[u_{\bh_{k}},\bx_0]$ to $F[\varphi](\bx_0)$.  
The numerical moment ensures the existence of an index $\ell$ such that 
$\bigl[ \widetilde{D}_{\mathbf{h}_k}^2 u_{\bh_k}(\bx_0) ]_{\ell \ell}$ 
becomes unbounded as $k \to \infty$ if any term in the discretization diverges.  

In order to exploit the blow-up inherent to case (iii), we consider three possibilities: (a) 
$\overline{u}$ is $C^1$; (b)
$\overline{u}$ is Lipschitz, i.e., $\nabla \overline{u}$ exists almost everywhere and is bounded; (c)
$\nabla \overline{u}$ does not exist due to unboundedness.     
As motivation, 
if $\overline{u} \in C^1$, we would have $\nabla \overline{u}(\bx_0) = \nabla \varphi (\bx_0)$ 
and $\nabla \ou$ is bounded on a neighborhood of $\bx_0$, 
a property that can be exploited to bound the mixed second order derivatives in 
the Hessian approximation $D_{\bh_k}^2 u_{\bh_k}$.   
When $\nabla \overline{u}$ is bounded, we can use a similar argument as $(a)$ 
and the continuity of $\ou$ to ensure the other terms in the discretization do not behave too badly.  
When $\nabla \overline{u}(\bx_0)$ is unbounded, 
our idea is to use the fact that 
$\bigl[ \widetilde{D}_{\mathbf{h}_{k}}^2 u_{\bh_k} (\bx_0) \bigr]_{\ell\ell}$ diverges at a high enough rate 
to directly absorb all of the other terms.  
Thus, the convergence proof is based on using the special structure of the scheme 
and choosing appropriate sequences that exploit the regularity of the underlying 
viscosity subsolution $\overline{u}$. 
In addition, the ellipticity of $F$ and the Lipschitz continuity of $F$ 
play an important role for us to move ``derivatives" onto $\varphi$.  

Figure~\ref{max_moment_fig} illustrates an aspect of the wholistic approach
assuming the function can be touched from above.  
For lower-regularity functions, we expect the diagonal components in the numerical moment 
to be positive giving additional freedom to absorb the contributions of the non-monotone components 
in the scheme. This idea is illustrated in Figure~\ref{max_moment_fig} where we see that for smooth functions 
we expect the numerical moment terms 
$\widetilde{\delta}_{x_i, h_i^{(k)}}^2 u_{\bh_k}(\mathbf{x}_0) - \widehat{\delta}_{x_i, h_i^{(k)}}^2 u_{\bh_k}(\mathbf{x}_0)$ 
to be negative (and going to zero by the consistency of the scheme) 
and for non-smooth functions we expect them to be positive and potentially diverging.

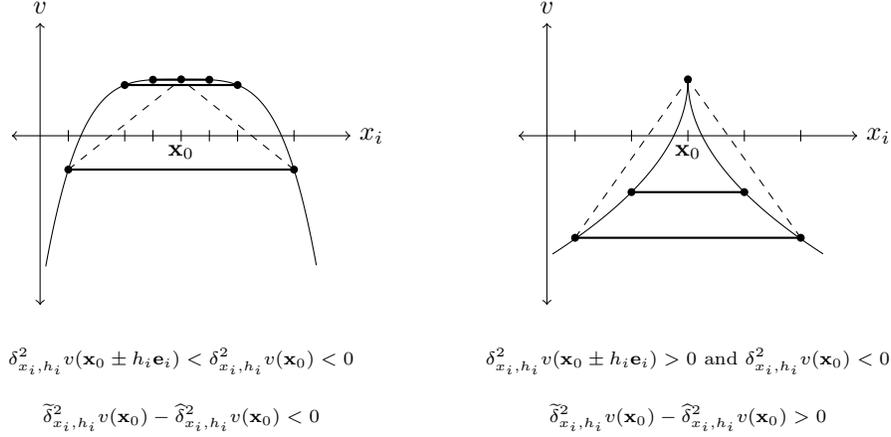
\begin{figure}[htb] \label{moment_sign_fig}
\begin{center}
\begin{tikzpicture}[scale=0.75]
\draw[<->] (-3,0) -- (3,0) node[right] {$x_i$};
\draw[<->] (-2.5,-3) -- (-2.5,2) node[above] {$v$};
\draw[samples=1000,domain=-2.4:2.4, smooth, variable=\x] plot ({\x}, {-0.1*\x*\x*\x*\x+1});
\fill (0,1) circle (2pt); 
\draw (0,-0.1) -- (0,0.1);
\node[below] at (0,0) {\small $\mathbf{x}_0$};
\fill (-0.5, 0.99375) circle (2pt); 
\draw (-0.5,-0.1) -- (-0.5,0.1);
\fill (-1, 0.9) circle (2pt);
\draw (-1,-0.1) -- (-1,0.1);
\fill (-2, -0.6) circle (2pt);
\draw (-2, -0.1) -- (-2, 0.1);
\fill (0.5, 0.99375) circle (2pt); 
\draw (0.5,-0.1) -- (0.5,0.1);
\fill (1, 0.9) circle (2pt);
\draw (1,-0.1) -- (1,0.1);
\fill (2, -0.6) circle (2pt);
\draw (2, -0.1) -- (2, 0.1);
\draw[dashed] (-2, -0.6) -- (0,1);
\draw[dashed] (2, -0.6) -- (0,1);
\draw[thick] (-0.5, 0.99375) -- (0.5, 0.99375);
\draw[thick] (-1,0.9) -- (1,0.9); 
\draw[thick] (-2, -0.6) -- (2,-0.6);
\node at (0,-4) {\scriptsize $\delta_{x_i, h_i}^2 v(\mathbf{x}_0 \pm h_i \mathbf{e}_i) 
< \delta_{x_i, h_i}^2 v(\mathbf{x}_0) < 0$};
\node at (0,-5) {\scriptsize 
$\widetilde{\delta}_{x_i, h_i}^2 v(\mathbf{x}_0) - \widehat{\delta}_{x_i, h_i}^2 v(\mathbf{x}_0) < 0$};
\end{tikzpicture}
\hspace{0.35in}
\begin{tikzpicture}[scale=0.75]
\draw[<->] (-3,0) -- (3,0) node[right] {$x_i$};
\draw[<->] (-2.5,-3) -- (-2.5,2) node[above] {$v$};
\draw[samples=1000,domain=-2.4:2.4, smooth, variable=\x] plot ({\x}, {-2*sqrt(sqrt(\x*\x))+1});
\fill (0,1) circle (2pt); 
\draw (0,-0.1) -- (0,0.1);
\node[below] at (0,0) {\small $\mathbf{x}_0$};
\fill (-1, -1) circle (2pt);
\draw (-1,-0.1) -- (-1,0.1);
\fill (-2, -1.81) circle (2pt);
\draw (-2, -0.1) -- (-2, 0.1);
\fill (1, -1) circle (2pt);
\draw (1,-0.1) -- (1,0.1);
\fill (2, -1.81) circle (2pt);
\draw (2, -0.1) -- (2, 0.1);
\draw[dashed] (-2, -1.81) -- (0,1);
\draw[dashed] (2, -1.81) -- (0,1);
\draw[thick] (-1,-1) -- (1,-1); 
\draw[thick] (-2, -1.81) -- (2,-1.81);
\node at (0,-4) {\scriptsize $\delta_{x_i, h_i}^2 v(\mathbf{x}_0 \pm h_i \mathbf{e}_i) > 0$ and 
$\delta_{x_i, h_i}^2 v(\mathbf{x}_0) < 0$};
\node at (0,-5) {\scriptsize 
$\widetilde{\delta}_{x_i, h_i}^2 v(\mathbf{x}_0) - \widehat{\delta}_{x_i, h_i}^2 v(\mathbf{x}_0) > 0$};
\end{tikzpicture}
\end{center}
\caption{
The regularity of $v$ determines the sign of the nonmixed components of the numerical moment.  
If $v$ has lower regularity and can be touched from above at $\bx_0$, then we expect the 
terms to be increasingly positive.  
}
\label{max_moment_fig}
\end{figure}

\begin{theorem} \label{FD_convergence_nd}
Suppose that $g$ is continuous on $\partial \Omega$. Assume  problem \eqref{FD_problem} 
satisfies the comparison principle of Definition~\ref{comparison},  has a unique continuous 
viscosity solution $u$,  
the operator $F$ is proper elliptic with a lower ellipticity constant $k_{**} > 0$, 
and $F$ is Lipschitz continuous with respect to its first three arguments.  
Let $U\in S(\cT_{\mathbf{h}}')$ be the solution 
to the finite difference scheme defined by  \eqref{FD_method}--\eqref{FD_auxiliaryBC} and \eqref{Abeta} 
with  $\gamma \geq \frac{K^{**}}{2}$ and $\beta \geq \frac{K^*}{2}$ for 
$K^{**} = \max \left\{ K_{ij} \right\}$ and $K^* = \max \left\{ K_i \right\}$, where $K_{ij}$ and $K_{i}$ 
denote the Lipschitz constants of $F$ with respect to the components of the 
$D^2 u$ and $\nabla u$ arguments, respectively.  
Let $u_{\mathbf{h}}$ be the piecewise constant extension of $U$ defined  by \eqref{FD_extension_nd}.  
Suppose the scheme is admissible and $\ell^\infty$-norm stable.  
Then $u_{\mathbf{h}}$ converges to $u$ locally uniformly
as $\mathbf{h} \to  \mathbf{0}^+$.  
\end{theorem}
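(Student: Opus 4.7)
The plan is to apply the half-relaxed limits method. By Theorem~\ref{thm_stability}, the sequence $\{u_{\bh_k}\}$ is uniformly bounded in $\ell^\infty$ for any subsequence $\bh_k \to \mathbf{0}^+$, so I can define
\[
\ou(\bx) := \limsup_{k\to\infty,\, \by \to \bx} u_{\bh_k}(\by), \qquad \uu(\bx) := \liminf_{k\to\infty,\, \by \to \bx} u_{\bh_k}(\by),
\]
which are bounded with $\ou \in USC(\oOme)$, $\uu \in LSC(\oOme)$, and $\uu \leq \ou$. I will show $\ou$ is a viscosity subsolution and $\uu$ is a viscosity supersolution of \eqref{FD_problem}. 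The comparison principle then forces $\ou \leq \uu$, so $\ou = \uu = u$; since $u \in C(\oOme)$ this pointwise equality upgrades to local uniform convergence of $u_{\bh_k}$ to $u$ by a standard argument, and since $\bh_k$ was arbitrary, $u_{\bh} \to u$ locally uniformly as $\bh \to \mathbf{0}^+$.

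I outline only the subsolution step for $\ou$; the supersolution step is symmetric. Fix $\varphi \in C^2(\oOme)$ such that $\ou - \varphi$ attains a strict local maximum at $\bx_0 \in \oOme$. The boundary case $\bx_0 \in \partial \Omega$ is treated separately by verifying the viscosity-sense boundary condition using the auxiliary equation \eqref{FD_auxiliaryBC} together with $u_{\bh}|_{\partial \Omega} = g$; I assume $\bx_0 \in \Omega$. A standard perturbation argument yields grid points $\bx_k \in \cT_{\bh_k} \cap \Omega$ with $\bx_k \to \bx_0$, $u_{\bh_k}(\bx_k) \to \ou(\bx_0)$, and $u_{\bh_k} - \varphi$ attaining a local maximum at $\bx_k$. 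Writing $\psi_k := u_{\bh_k} - \varphi$, the local-max property immediately gives $\delta_{x_i, h_i^{(k)}}^2 \psi_k(\bx_k) \leq 0$ for each $i$ and sign information for the sided first differences. Substituting $u_{\bh_k} = \varphi + \psi_k$ into $\hF[u_{\bh_k}, \bx_k] = 0$, the goal is to pass to the limit $k \to \infty$ and deduce $F(D^2 \varphi(\bx_0), \nabla \varphi(\bx_0), \ou(\bx_0), \bx_0) \leq 0$.

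The difficulty is that the five-point operator $\widetilde{\delta}_{x_i, h_i}^2$ and the off-diagonal entries of $\widehat{D}_{\bh}^2, \widetilde{D}_{\bh}^2$ applied to $\psi_k$ are not signed by the local-max condition, and they need not vanish in the limit when $\ou$ has low regularity. Following the wholistic account preceding the theorem, I split into three cases according to the regularity of $\ou$ at $\bx_0$. In Case (i), $\ou \in C^2$ near $\bx_0$: the consistency of $\hF$ kills the numerical moment and numerical viscosity corrections and proper ellipticity closes the argument. In Case (ii), $\nabla \ou$ is Lipschitz but $\ou \notin C^2$: the discrete Hessians remain bounded but $D^2 \ou(\bx_k) \leq D^2 \varphi(\bx_k)$ need not hold, so one must choose the subsequence $(\bh_k, \bx_k)$ so that the grid approximately aligns with the eigen-directions of a limit matrix $H$ of $D^2 \ou(\bx_k)$, exploiting ellipticity and the Lipschitz continuity of $F$ so that the narrow stencil still captures the curvature information needed to close the inequality. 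In Case (iii), some $\ou_{x_\ell}$ fails to be Lipschitz: a subsequence yields $[\widetilde{D}_{\bh_k}^2 u_{\bh_k}(\bx_k)]_{\ell \ell} \to +\infty$, and this is where the numerical moment becomes decisive. Choosing the subsequence so that $h_\ell^{(k)} \ll h_j^{(k)}$ for $j \neq \ell$ makes the $(\ell,\ell)$ moment entry (scaling like $(h_\ell^{(k)})^{-2}$) dominate all other unsigned terms, and the ellipticity bound $k_{**} > 0$ extracts a divergent negative contribution $-k_{**} [\widetilde{D}_{\bh_k}^2 u_{\bh_k}(\bx_k)]_{\ell \ell}$ from $\hF[u_{\bh_k}, \bx_k]$; the generalized-monotonicity thresholds $\gamma \geq K^{**}/2$ and $\beta \geq K^*/2$ from Lemma~\ref{g-mon-lemma} guarantee that all remaining off-diagonal second-order terms (at worst $O((h_\ell^{(k)} h_j^{(k)})^{-1})$ via the factorization \eqref{moment_factor}) and sided first-order terms ($O((h_\ell^{(k)})^{-1})$) are absorbed by this dominant negative term, forcing the desired inequality in the limit. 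The main obstacle is precisely this last case, particularly the sub-case where $\nabla \ou(\bx_0)$ itself is undefined due to unboundedness: the proof rests on the rate comparison $(h_\ell^{(k)})^{-2} \gg (h_\ell^{(k)})^{-1}$ between the divergent diagonal numerical moment and the first-order sided differences, combined with the factorization of off-diagonal numerical moments as $\tfrac12 h_i h_j \delta_{x_i,h_i}^2 \delta_{x_j,h_j}^2$, which is what ultimately makes the narrow-stencil scheme convergent despite its lack of Barles--Souganidis monotonicity.
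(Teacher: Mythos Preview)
Your overall architecture---half-relaxed limits, show $\ou$ is a subsolution and $\uu$ a supersolution, then invoke the comparison principle---is exactly the paper's. Your case split also mirrors the motivational ``wholistic account'' preceding the theorem. However, the actual proof is organized differently and supplies machinery your sketch does not, and your Case~(iii) contains a sign error that would break the argument.

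First, the sign issue. You write that $[\widetilde{D}_{\bh_k}^2 u_{\bh_k}(\bx_k)]_{\ell\ell} \to +\infty$ and that ellipticity extracts the ``divergent negative contribution'' $-k_{**}[\widetilde{D}_{\bh_k}^2 u_{\bh_k}(\bx_k)]_{\ell\ell}$. But to conclude $F[\varphi](\bx_0)\le 0$ from $0=\hF[u_{\bh_k},\bx_k]$ you need the correction $\hF-F[\varphi]$ to be \emph{nonnegative} and divergent; a negative divergent term pushes the inequality the wrong way. In the paper the operative quantity is $\delta_{x_\ell,h_\ell^{(k_\ell)}}^2 u_{\bh_k}\to -\infty$ (equivalently $[\widehat{D}_{\bh_k}^2]_{\ell\ell}\to -\infty$), and ellipticity yields the \emph{positive} term $-k_{**}\,\delta_{x_\ell,h_\ell^{(k_\ell)}}^2 u_{\bh_k}\to +\infty$, which then absorbs the unsigned remainders.

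Second, the paper's technical execution is substantially more delicate than your outline suggests. The actual case split is not on the regularity of $\ou$ but on whether the interpolated discrete Hessian $H^{(\bk)}=D^2 u^{(\bk)}(\bz_\bk)$ is bounded, where $u^{(\bk)}$ is a carefully chosen interpolant in the enriched space $S=Q_2\cup\mathrm{span}\{x_j^3,x_j^4\}$ matching $u_{\bh_k}$ on the full $3^d+2d$-point local stencil $\cN_{\bk}'\cup\overline{\cN}_{\bk}'$. In the bounded case (the paper's Case~(i)), showing $D^2\varphi\ge H$ requires picking, for each eigenvector $\mathbf{q}_i$ of $D^2\varphi-H$, a lattice direction $\mathbf{d}\in\cN_{\bk}'-\bz_\bk$ approximately parallel to $\mathbf{q}_i$ and comparing the directional difference quotient $\delta_{\mathbf{d}}^2$ on $\varphi$ versus $u^{(\bk)}$; this is the step your sketch alludes to but does not supply. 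In the unbounded case (the paper's Case~(ii)), the paper introduces a rate function $f_\ell$ capturing the blow-up speed of $\delta_{x_\ell,h_\ell}^2 u_{\bh_k}$ and splits into three subcases (iia)--(iic) according to whether $f_\ell(h)/h\to 0$, stays bounded away from $0$ and $\infty$, or diverges; each subcase requires its own argument (stability bounds, a continuity-of-$\ou$ contradiction, and a diagonal-direction contradiction to \eqref{uk_local_max}, respectively) to control the mixed terms $[\widetilde{D}_{\bh_k}^2 u_{\bh_k}]_{\ell i}$ and $[\widehat{D}_{\bh_k}^2 u_{\bh_k}]_{\ell i}$. Your appeal to the scaling $(h_\ell)^{-2}\gg (h_\ell)^{-1}$ covers only the easiest of these and does not handle the off-diagonal Hessian terms, which can scale like $(h_\ell h_j)^{-1}$ and are not automatically dominated without the rate-function analysis.
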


\begin{proof}
Since the proof is long and technical, we divide it into six steps.   

\medskip 
{\em Step 1}: 
Because the finite difference scheme is $\ell^\infty$-norm stable,
there exists a constant $C > 0$ such that 
\begin{equation}\label{e3.7}
\left\| u_{\mathbf{h}} \right\|_{L^\infty(\Omega)} \leq C
\end{equation}
independent of $\bh$.   
Define the upper and lower semicontinuous functions $\ou$ and $\uu$ by 
\begin{equation}\label{upper_lower_limits}
	\ou(\mathbf{x})\equiv\limsup_{ \bh \to \mathbf{0}^+ \atop \xi \to \bx} u_{\bh}(\xi) , 
	\qquad 
	 \uu(\mathbf{x})\equiv\liminf_{ \bh \to \mathbf{0}^+ \atop \xi \to \bx}  u_{\bh}(\xi) , 
\end{equation} 
where the limits are understood as multi-limits 
(we refer the reader to \cite{Habil05} for an introduction to multi-limits and multi-index notation).
The remainder of the proof is to show that $\ou$ and $\uu$ are, 
respectively, a viscosity subsolution and a viscosity supersolution of \eqref{FD_problem}.  
Hence, they must be the same and coincide with the viscosity solution of \eqref{FD_problem} by the comparison principle. 
We will only show that $\ou$ is a viscosity subsolution since the
proof that $\uu$ is a viscosity supersolution is analogous.

\medskip
{\em Step 2}: 
To show $\ou$ is a viscosity subsolution of \eqref{FD_problem},  let $\varphi\in C^2(\oOme)$ 
such that $\ou-\varphi$ takes a strict local maximum at $\mathbf{x}_0\in \overline{\Omega}$.
We first assume that $\varphi\in \mathcal{P}_2$, the set of all quadratic polynomials.
In {\em Step 4} we shall consider the general test function  $\varphi\in C^2(\oOme)$. 
Without a loss of generality, we assume $\ou(\mathbf{x}_0)=\varphi(\mathbf{x}_0)$ (after a translation 
in the dependent variable). Then there exists a ball, 
$B_{r_0}(\mathbf{x}_0) \subset \mathbb{R}^d$, 
centered at $\mathbf{x}_0$ with radius $r_0>0$ (in the $\ell^\infty$ metric) 
such that
\begin{equation}\label{e3.9}
\ou(\mathbf{x})-\varphi(\mathbf{x}) < \ou(\mathbf{x}_0)-\varphi(\mathbf{x}_0)= 0  
\qquad\forall \mathbf{x}\in \left( B_{r_0}(\mathbf{x}_0) \cap \overline{\Omega} \right) \setminus \{ \bx_0 \}.
\end{equation}

\medskip
{\em Step 3}:  We now show that 
if $\mathbf{x}_0 \in \Omega$, then 
\begin{equation}  \label{step3a} 
F \bigl( D^2 \varphi(\mathbf{x}_0) , \nabla \varphi(\mathbf{x}_0) , \varphi(\mathbf{x}_0) , 
		\mathbf{x}_0 \bigr) \leq 0 , 
\end{equation} 
and, if $\mathbf{x}_0 \in \partial \Omega$, then either 
\begin{subequations}\label{step3ab}
\begin{align}
	F \bigl( D^2 \varphi(\mathbf{x}_0) , \nabla \varphi(\mathbf{x}_0) , \varphi(\mathbf{x}_0) , 
		\mathbf{x}_0 \bigr) &\leq 0 \label{step3b1} \\
\noalign{or}
	B \left[ \varphi \right](\mathbf{x}_0) &\leq 0 . \label{step3b2}
\end{align}
\end{subequations}
Such a conclusion would validate that $\ou$ is a viscosity subsolution of \eqref{FD_problem} 
and satisfies the boundary condition in the viscosity sense with respect to the quadratic 
test function $\varphi$.  

\medskip 
{\em Step 3a}:  We first consider the scenario when $\mathbf{x}_0 \in \Omega$ and 
prove that \eqref{step3a} holds. 
To this end, we will consider cases based on the regularity of $\ou$ at $\bx_0$.  

By the definition of $\ou$ and \eqref{e3.9}, there exists (maximizing) 
sequences $\{\bh_{\bk}\}$, $\{\bx_{\bk}\}$, and $\{\bz_{\bk}\}$ 
such that 
\begin{subequations}\label{max_seq}
\begin{align}
	& \bh_{\bk} \to \mathbf{0}^+ , \\ 
	& \bx_{\bk} \to \bx_0 \text{ with } \bx_{\bk} \in \cT_{\bh_{\bk}} , \\ 
	& u_{\bh_{\bk}}(\bx_{\bk}) \to \ou(\bx_0) , \\ 
	& \bz_{\bk} \to \bx_0  \text{ with } \left| x_i^{(k_i)} - z_i^{(k_i)} \right| \leq \frac12 h_i^{(k_i)} 
		\text{ and } u_{\bh_{\bk}}(\bz_{\bk}) = u_{\bh_{\bk}}(\bx_{\bk}), \\
	& u_{\bh_{\bk}}(\bz) - \varphi(\bz) \text{ is locally maximized at } 
		\bz = \bz_{\bk} \text{ for all } \min \bk \text{ sufficiently large} . 
\end{align}
\end{subequations}
Note that the need for the two possibly different points $\bx_{\bk}$ and $\bz_{\bk}$ is due to the fact that 
$u_{\bh_{\bk}}$ is piecewise constant and $\varphi$ is not.  
Let $\cN_{\bk} \subset \cT_{\bh_{\bk}}'$ denote the local neighborhood of $\bx_{\bk}$ defined by 
\begin{align} \label{Nkdef}
	\cN_{\bk} \equiv \Big\{ \bx \in \cT_{\bh_{\bk}} \mid 
	& \bx = \bx_{\bk} + \sum_{i=1}^d \alpha_i h_i^{(k_i)} \mathbf{e}_i 
	\text{ for some } \alpha_i \in \{1, 0, -1 \} \Big\} , 
\end{align}
and let $\overline{\cN}_{\bk}$ denote the local stencil of the proposed finite difference scheme centered at $\bx_{\bk}$.  
Thus, $\cN_{\bk}$ denotes all nearest neighbors while $\overline{\cN}_{\bk}$ does not contain all 
nearest neighbors when $d \geq 3$ and contains additional nodes two steps away in the Cartesian 
directions.  
We denote the corresponding neighborhoods centered at $\bz_{\bk}$ by 
$\cN_{\bk}'$ and $\overline{\cN}_{\bk}'$, where 
$\cN_{\bk}'$ is defined by 
\[
	\cN_{\bk}' \equiv \left\{ \bz \mid \bz = \bx + (\bz_{\bk} - \bx_{\bk}) \text{ for some } \bx \in \cN_{\bk} \right\} 
\]
and $\overline{\cN}_{\bk}'$ consists of the local stencil of the proposed finite difference scheme centered at $\bz_{\bk}$.
Observe that $\bz_{\bk} \in \cN_{\bk}' \cap \overline{\cN}_{\bk}'$, and, for $\min \bk$ sufficiently large, we have 
\begin{equation} \label{uk_local_max}
	u_{\bh_{\bk}}(\bz_{\bk}) - \varphi(\bz_{\bk}) 
	> u_{\bh_{\bk}}(\bz_{\bk}') - \varphi ( \bz_{\bk}') 
	\text{ for all } \bz_{\bk}' \in \left( \cN_{\bk}' \cup \overline{\cN}_{\bk}' \right) \setminus \{ \bz_{\bk} \} . 
\end{equation}
Thus, 
\begin{subequations} \label{ops_bounds}
\begin{align}
	& \delta_{x_i, h_i^{(k_i)}}^- u_{\bh_{\bk}}(\bz_{\bk}) 
		\geq \delta_{x_i, h_i^{(k_i)}}^- \varphi(\bz_{\bk}) \to \varphi_{x_i}(\bx_0) , \label{e6.9a} \\ 
	& \delta_{x_i, h_i^{(k_i)}}^+ u_{\bh_{\bk}}(\bz_{\bk}) 
		\leq \delta_{x_i, h_i^{(k_i)}}^+ \varphi(\bz_{\bk}) \to \varphi_{x_i}(\bx_0) , \label{e6.9b} \\ 
	& \delta_{x_i, h_i^{(k_i)}}^2 u_{\bh_{\bk}}(\bz_{\bk}) 
		\leq \delta_{x_i, h_i^{(k_i)}}^2 \varphi(\bz_{\bk}) = \varphi_{x_i x_i}(\bx_0) , \\ 
	& \delta_{x_i, 2 h_i^{(k_i)}}^2 u_{\bh_{\bk}}(\bz_{\bk}) 
		\leq \delta_{x_i, 2 h_i^{(k_i)}}^2 \varphi(\bz_{\bk}) = \varphi_{x_i x_i}(\bx_0) , \\ 
	& \delta_{\xi_i^j, \bh_{\bk}}^2 u_{\bh_{\bk}}(\bz_{\bk}) 
		\leq \delta_{\xi_i^j, \bh_{\bk}}^2 \varphi(\bz_{\bk}) , \\ 
	& \delta_{\eta_i^j, \bh_{\bk}}^2 u_{\bh_{\bk}}(\bz_{\bk}) 
		\leq \delta_{\eta_i^j, \bh_{\bk}}^2 \varphi(\bz_{\bk})
\end{align}
\end{subequations}
for all $\min \bk$ sufficiently large.  

Using the fact that $\left| \cN_{\bk}' \cup \overline{\cN}_{\bk}' \right| = 3^d + 2d$, 
we can uniquely define the local interpolation function $u^{(\bk)}$ of $u_{\bh_{\bk}}$ by 
\begin{subequations} 
\begin{align}
	& u^{(\bk)} \in S , \\ 
	& u^{(\bk)} (\bz) = u_{\bh_{\bk}} (\bz) \text{ for all } \bz \in \cN_{\bk}' \cup \overline{\cN}_{\bk}' , 
\end{align}
\end{subequations}
where 
\begin{align*}
	S &\equiv Q_2 \cup \text{span }  \{ x_j^3, x_j^4;\ j=1,2, \cdots, d \}, \\  
	Q_2 &\equiv \text{span } \Bigl\{ p(x)=\sum_{\ell_j=0,1,2\atop 1\leq j\leq d}  a_{\ell_1\ell_2\cdots\ell_d}  x_1^{\ell_1} 	x_2^{\ell_2} \cdots x_d^{\ell_d} \Bigr\}.
\end{align*}
Thus, $S$ corresponds to adding monomials of degree 3 and 4 to the standard space $Q_2$ formed by the 
tensor product of polynomials with degree 2 or less.  For $d=2$, we have 
\begin{align*}
	u^{(\bk)}(\bx) 
	& = a_{0,0} + a_{1,0} x_1 + a_{2,0} x_1^2 + a_{0,1} x_2 + a_{0,2} x_2^2 \\ 
	& \qquad + a_{1,1} x_1 x_2 + a_{2,1} x_1^2 x_2 + a_{1,2} x_1 x_2^2 + a_{2,2} x_1^2 x_2^2 \\ 
	& \qquad + b_0 x_1^3 + b_1 x_1^4 + c_0 x_2^3 + c_1 x_2^4
\end{align*} 
for some unknown constants 
$a_{i,j}, b_\ell, c_\ell$ with $i, j \in \{0,1,2\}$, $\ell \in \{0,1\}$. 
Define $H^{(\bk)} \in \mathbb{R}^{d \times d}$ by $H^{(\bk)} = D^2 u^{(\bk)}(\bz_{\bk})$.  
We now consider cases based on whether $H^{(\bk)}$ is bounded or not.  
When the sequence is bounded, we will use the definition of the local neighborhood 
$\cN_{\bk}'$ to provide directional resolution by choosing the path $\bh_{\bk}$ appropriately. 
When the sequence is not bounded, we will exploit the lower regularity of the underlying limiting function $\ou$ 
combined with the ellipticity of $F$ and/or the numerical moment 
to help move derivatives onto $\varphi$.  


\smallskip 
\underline{\em Case {\rm (i)}}: $\{ H^{(\bk)} \}$ has a uniformly bounded subsequence. 

In this case, there exists a symmetric matrix $H \in \mathbb{R}^{d \times d}$ 
and a subsequence (not relabeled) such that $H^{(\bk)} \to H$. 
Using the facts that the following (central) discrete operators are all second-order accurate 
and $u^{(\bk)}$ is smooth for all $\bk$ with $D^2 u^{(\bk)} \to H$, 
we also have 
$D_{\bh_{\bk}}^2 u_{\bh_{\bk}}(\bz_{\bk}) \to H$,   
$\overline{D}_{\bh_{\bk}}^2 u_{\bh_{\bk}}(\bz_{\bk}) \to H$,   
$\widetilde{D}_{\bh_{\bk}}^2 u_{\bh_{\bk}}(\bz_{\bk}) \to H$,   
and $\widehat{D}_{\bh_{\bk}}^2 u_{\bh_{\bk}}(\bz_{\bk}) \to H$. 

We show $D^2 \varphi - H$ is symmetric positive semidefinite.  
By the symmetry of $D^2 \varphi - H$, there exists a unitary matrix $Q$ and a diagonal matrix $\Lambda$ 
such that $D^2 \varphi - H = Q \Lambda Q^T$.  
Let $\mathbf{q}_i$ denote the $i$th column of $Q$.  
Then, $\Lambda_{ii} = \mathbf{q}_i^T (D^2 \varphi - H) \mathbf{q}_i$.  
By the definition of $\cN_{\bk}'$, there exists a multi-index $\bk_0$ such that 
$\mathbf{d} \equiv \bz - \bz_{\bk_0}$ is essentially parallel to $\mathbf{q}_i$
for some $\bz \in \cN_{\bk_0}'$. 
Define 
\[
	\delta_{\mathbf{d}}^2 v(\bx) \equiv \frac{v(\bx-\mathbf{d}) - 2v(\bx) + v(\bx+\mathbf{d})}{\mathbf{d} \cdot \mathbf{d}}, 
\]
which can be proved to be a second order approximation of the directional 
derivative $\partial^2_{\mathbf{d}} v(\bx)$. 
Choose $\epsilon > 0$. Then, there exists a multi-index $\bk_0$ such that 
\begin{subequations}
\begin{align}
	& \Bigl| \mathbf{q}_i \cdot \left( D^2 \varphi(\bz_{\bk_0}) - H \right) \mathbf{q}_i 
		- \frac{1}{\mathbf{d} \cdot \mathbf{d}} \mathbf{d} \cdot \left( D^2 \varphi(\bz_{\bk_0}) - H \right) \mathbf{d} \Bigr| 
		< \epsilon/3 , \\ 
	& \Bigl| \frac{1}{\mathbf{d} \cdot \mathbf{d}} \mathbf{d} \cdot \left( H - H^{(\bk_0)} \right) \mathbf{d} \Bigr| < \epsilon / 3 , \\ 
	& \Bigl| \partial^2_{\mathbf{d}} (\varphi - u^{(\bk_0)})(\bz_{\bk_0}) 
		- \delta_{\mathbf{d}}^2 \left( \varphi - u^{(\bk_0)} \right)(\bz_{\bk_0}) \Bigr| < \epsilon/3 . 
\end{align}
\end{subequations}
Thus, there holds 
\begin{align*}
	\Lambda_{ii} 
	& = \mathbf{q}_i 
		\cdot \left( D^2 \varphi(\bz_{\bk_0}) - H \right) \mathbf{q}_i 
	\geq \frac{1}{\mathbf{d} \cdot \mathbf{d}} \mathbf{d} 
		\cdot \left( D^2 \varphi(\bz_{\bk_0}) - H \right) \mathbf{d} - \epsilon/3 \\ 
	& \geq \frac{1}{\mathbf{d} \cdot \mathbf{d}} \mathbf{d} 
		\cdot \left( D^2 \varphi - H^{(\bk_0)} \right) (\bz_{\bk_0}) \mathbf{d} - 2 \epsilon / 3 \\ 
	& = \frac{1}{\mathbf{d} \cdot \mathbf{d}} \mathbf{d} 
		\cdot \left( D^2 \varphi - D^2 u^{(\bk_0)} \right)(\bz_{\bk_0}) \mathbf{d} - 2 \epsilon / 3 
	= \partial^2_{\mathbf{d}} (\varphi - u^{(\bk_0)})(\bz_{\bk_0}) - 2 \epsilon / 3 \\ 
	& \geq \delta_{\mathbf{d}}^2 \left( \varphi - u^{(\bk_0)} \right)(\bz_{\bk_0}) - \epsilon \\ 
	& > - \epsilon
\end{align*}
by the definition of the interpolation function $u^{(\bk_0)}$ and \eqref{uk_local_max}. 
Therefore, $\Lambda_{ii} \geq 0$.  
Since $i \in \{1,2,\ldots,d\}$ was arbitrary, 
it follows that $D^2 \varphi \geq H$.  

There holds  
$\nabla_{\bh_{\bk}}^\pm u_{\bh_{\bk}}(\bz_{\bk}) \to \nabla \varphi(\bx_0)$ 
by \eqref{e6.9a}, \eqref{e6.9b}, and 
the fact that $D_{\bh_{\bk}}^2 u_{\bh_{\bk}}(\bz_{\bk})$ is convergent.  
Thus,  
\begin{align*}
	0 & = \lim_{\min \bk \to \infty} 
		\hF \left[ u_{\bh_{\bk}} , \bx_0 \right] \\ 
	& = F \left( H , \nabla \varphi(\bx_0) , \varphi(\bx_0) , \bx_0 \right) 
	\geq F \left( D^2 \varphi(\bx_0) , \nabla \varphi(\bx_0) , \varphi(\bx_0) , \bx_0 \right) 
\end{align*}
by the consistency of the scheme and the ellipticity of $F$.

\smallskip 
\underline{\em Case {\rm (ii)}}: $\{ H^{(\bk)} \}$ does not have a uniformly bounded subsequence.

Choose a grid function $\widetilde{u}_{\bh_{\bk}} \leq \varphi$, 
and define the local interpolation function $\widetilde{u}^{(\bk)}$ by 
\begin{subequations} 
\begin{align}
	& \widetilde{u}^{(\bk)} \in S , \\ 
	& \widetilde{u}^{(\bk)} (\bz) = u_{\bh_{\bk}} (\bz) \text{ for all } \bz \in \overline{\cN}_{\bk}' , \\ 
	& \widetilde{u}^{(\bk)} (\bz) = \widetilde{u}_{\bh_{\bk}} (\bz) \text{ for all } \bz \in \cN_{\bk}' \setminus \overline{\cN}_{\bk}' . 
\end{align}
\end{subequations}
Define $\widetilde{H}^{(\bk)} \in \mathbb{R}^{d \times d}$ by $\widetilde{H}^{(\bk)} = D^2 \widetilde{u}^{(\bk)}(\bz_{\bk})$.  

Suppose there exists a subsequence such that $\widetilde{H}^{(\bk)}$ is uniformly bounded.  
Then, we can apply the same arguments as in {\em Case {\rm (i)}} to the grid functions 
$u_{\bh_{\bk}}'$ defined by 
\[
	u_{\bh_{\bk}}'(\bz) \equiv \begin{cases}
	\widetilde{u}_{\bh_{\bk}} (\bz) & \text{ if } \bz \in \cN_{\bk}' \setminus \overline{\cN}_{\bk}'  , \\ 
	u_{\bh_{\bk}}(\bz) & \text{ otherwise} 
	\end{cases}
\]
for all $\bz \in \cN_{\bk}' \cup \overline{\cN}_{\bk}'$ 
to show that $F \left( D^2 \varphi(\bx_0) , \nabla \varphi(\bx_0) , \varphi(\bx_0) , \bx_0 \right) \leq 0$. 

For the remainder of Case (ii), 
we assume that for all choices of $\widetilde{u}_{\bh_{\bk}}$ the sequence $\widetilde{H}^{(\bk)}$ 
does not have a bounded subsequence.  
Then, there exists a pair of indices $(i,j)$ such that the sequence $[ \widetilde{D}_{\bh_{\bk}}^2 u_{\bh_{\bk}}(\bz_{\bk}) ]_{ij}$ 
or $[ \widehat{D}_{\bh_{\bk}}^2 u_{\bh_{\bk}}(\bz_{\bk}) ]_{ij}$
does not have a bounded subsequence.  
By the definition of the scheme and \eqref{hessian_diag}, there holds 
\begin{align}\label{eq6.12}
	0 & = \hF [ u_{\bh_{\bk}} , \bx_{0} ] \\ 
& = F \left( \overline{D}_{\bh_{\bk}}^2 u_{\bh_{\bk}} (\bz_{\bk}), \nabla_{\bh_{\bk}} u_{\bh_{\bk}} (\bz_{\bk}) , 
	u_{\bh_{\bk}}(\bx_{\bk}) , \bx_{\bk} \right)  \nonumber\\ 
	& \quad + 2 \gamma \sum_{i=1}^d \left( \delta_{x_i, 2 h_i^{(k_i)}}^2 u_{\bh_{\bk}} (\bz_{\bk}) 
		- \delta_{x_i, h_i^{(k_i)}}^2 u_{\bh_{\bk}} (\bz_{\bk}) \right) \nonumber \\ 
	& \quad - \gamma \sum_{i=1}^d \sum_{j=1 \atop j \neq i}^d 
		\left( \frac{h_i^{(k_i)}}{h_j^{(k_j)}} \delta_{x_i, h_i^{(k_i)}}^2 u_{\bh_{\bk}} (\bz_{\bk}) 
		+ \frac{h_j^{(k_j)}}{h_i^{(k_i)}} \delta_{x_j, h_j^{(k_j)}}^2 u_{\bh_{\bk}} (\bz_{\bk}) \right) \nonumber\\ 
	& \quad + \gamma \sum_{i=1}^d \sum_{j=1 \atop j \neq i}^d 
		\frac{\big( h_i^{(k_i)} \big)^2 + \big( h_j^{(k_j)} \big)^2}{2 h_i^{(k_i)} h_j^{(k_j)}}
		\left( \delta_{\xi_i^j, \bh_{\bk}}^2 u_{\bh_{\bk}}(\bz_{\bk}) + \delta_{\eta_i^j, \bh_{\bk}}^2 u_{\bh_{\bk}}(\bz_{\bk}) \right) \nonumber \\ 
	& \quad - \beta \vec{1} \cdot \left[ \nabla_{\bh_{\bk}}^+ - \nabla_{\bh_{\bk}}^- \right]_i u_{\bh_{\bk}}(\bx_{0}) . 
	\nonumber
\end{align}
By the choice of $\gamma$ and $\beta$ and \eqref{ops_bounds}, we have there exists an index $\ell$ such that 
$\delta_{x_\ell, h_\ell^{(k_\ell)}}^2 u_{\bh_{\bk}}(\bz_{\bk}) \to - \infty$ 
if one of the following three limits holds: 
$\delta_{\xi_i^j, \bh_{\bk}}^2 u_{\bh_{\bk}}(\bz_{\bk}) \to - \infty$,  
$\delta_{\eta_i^j, \bh_{\bk}}^2 u_{\bh_{\bk}}(\bz_{\bk}) \to - \infty$, 
or $\delta_{x_\ell, 2 h_\ell^{(k_\ell)}}^2 u_{\bh_{\bk}}(\bz_{\bk}) \to - \infty$. 
Therefore, there exists an index $\ell$ such that the sequence 
$\delta_{x_{\ell}, h_{\ell}^{(k_{\ell})}}^2 u_{\bh_{\bk}}(\bz_{\bk})$ 
does not have a bounded subsequence as $\min \bk \to \infty$.

Our aim now is to move the approximate derivatives onto $\varphi$ in \eqref{eq6.12}.
To proceed, we first choose sequences $\{ \bh_{\bk} \}$ and $\{ \bx_{\bk} \}$ that maximize the 
rate at which $\delta_{x_{\ell}, h_{\ell}^{(k_{\ell})}}^2 u_{\bh_{\bk}}(\bx_{\bk})$ diverges.  

Suppose $\lim_{\min \bk \to \infty} \delta_{x_\ell, h_\ell^{(k_\ell)}}^2 u_{\bh_{\bk}} (\bx_{\bk}) = - \infty$. 
Choose the function $f: [0, \infty) \to [0, \infty]$ such that 
\begin{subequations} \label{f_rate}
\begin{align}
	& f(0) = 0 , \\ 
	& f \text{ is strictly increasing} , \\ 
	& 0 > \liminf_{\min \bk \to \infty} 
		f\left(h_\ell^{(k_\ell)}\right) \delta_{x_\ell, h_\ell^{(k_\ell)}}^2 u_{\bh_{\bk}} (\bx_{\bk}) > - \infty . 
\end{align}
\end{subequations}
Then, there exists a function $f_\ell$ satisfying \eqref{f_rate} such that 
\begin{equation} \label{f1_rate}
	\limsup_{h \to 0^+} \frac{f_\ell(h)}{f(h)} < \infty \text{ for all choices of $f$ satisfying \eqref{f_rate}} 
\end{equation} 
and all sequences $\bh_{\bk} \to \mathbf{0}^+$ and $\bx_{\bk} \to \bx_0$ such that 
$\delta_{x_{\ell}, h_{\ell}^{(k_{\ell})}}^2 u_{\bh_{\bk}}(\bx_{\bk}) \to - \infty$.
Thus, $(f_\ell)^{-1}$ represents the rate at which $\delta_{x_\ell, h_\ell^{(k_\ell)}}^2 u_{\bh_{\bk}} (\bx_{\bk})$ diverges 
for the choice of sequences $\bh_{\bk} \to \mathbf{0}^+$ and $\bx_{\bk} \to \bx_0$ that maximize the rate at which 
$\delta_{x_\ell, h_\ell^{(k_\ell)}}^2 u_{\bh_{\bk}} (\bx_{\bk})$ diverges.  
(Note that this may not be the same choice of sequences used to form $H^{(k)}$ since we now choose the sequence 
that blows up at the fastest rate.) 
Furthermore, we have there exists a constant $C_\ell > 0$ such that 
\begin{equation} \label{rate_ou}
	\liminf_{\min \bk \to \infty} 
		f_\ell \left(h_\ell^{(k_\ell)}\right) \delta_{x_\ell, h_\ell^{(k_\ell)}}^2 u_{\bh_{\bk}} (\bx_{\bk}) = - C_\ell . 
\end{equation}
Therefore, there exists subsequences (not relabelled) such that 
\begin{equation} \label{rate_uh}
	\lim_{\min \bk \to \infty} f_\ell \left(h_\ell^{(k_\ell)} \right) \delta_{x_{k_\ell}, h_\ell^{(k_\ell)}}^2 u_{\bh_{\bk}} (\bx_{\bk}) 
	= - C_\ell . 
\end{equation}

We can also show that the $(\ell, \ell)$ component of the numerical moment is nonnegative.  
By \eqref{rate_uh}, \eqref{rate_ou}, and \eqref{moment_ii_2h}, we have 
\begin{align}\label{momentii} 
	& \liminf_{\min \bk \to \infty} f_\ell \left(h_\ell^{(k_\ell)} \right) 
		\left[ \widetilde{D}_{\bh_{\bk}}^2 u_{\bh_{\bk}}(\bx_{\bk}) 
			- \widehat{D}_{\bh_{\bk}}^2 u_{\bh_{\bk}}(\bx_{\bk}) \right]_{\ell \ell} \\ 
	\nonumber & \qquad \liminf_{\min \bk \to \infty} f_\ell \left(h_\ell^{(k_\ell)} \right) 
		\bigg( \frac12 \delta_{x_{k_\ell}, h_\ell^{(k_\ell)}}^2 u_{\bh_{\bk}} (\bx_{\bk} - h_\ell^{(k_\ell)} \mathbf{e}_\ell)  
			- \delta_{x_{k_\ell}, h_\ell^{(k_\ell)}}^2 u_{\bh_{\bk}} (\bx_{\bk}) \\ 
			\nonumber & \qquad \qquad 
			+ \frac12 \delta_{x_{k_\ell}, h_\ell^{(k_\ell)}}^2 u_{\bh_{\bk}} (\bx_{\bk} + h_\ell^{(k_\ell)} \mathbf{e}_\ell) \bigg) \\ 
	\nonumber & \qquad \geq - \frac12 C_\ell + C_\ell  - \frac12 C_\ell = 0 . 
\end{align} 
Consequently, 
\begin{equation} \label{momentii_bound}
	f_\ell \left(h_\ell^{(k_\ell)} \right) 
		\left[ \widetilde{D}_{\bh_{\bk}}^2 u_{\bh_{\bk}}(\bx_{\bk}) 
			- \widehat{D}_{\bh_{\bk}}^2 u_{\bh_{\bk}}(\bx_{\bk}) \right]_{\ell \ell} 
	\geq - \frac{k_{**}}{4\gamma} C_\ell 
\end{equation}
for all $\min \bk$ sufficiently large.

For the remainder of case (ii) we will use the following strategy.  
By the definition of the scheme, there holds 
\begin{align*}
	0 = \hF [ u_{\bh_{\bk}} , \bx_{\bk} ] \implies 
	0 = f_\ell \left( h_\ell^{(k_\ell)} \right) \hF [ u_{\bh_{\bk}} , \bx_{\bk} ] . 
\end{align*}
We will exploit the blow-up in $\delta_{x_{k_\ell}, h_\ell^{(k_\ell)}}^2 u_{\bh_{\bk}} (\bx_{\bk})$ by 
letting $\min \bk$ be sufficiently large and sending $k_\ell \to \infty$.  
Using the structure of $\hF$ and the choice of the sequences $\{ \bh_{\bk} \}$ and $\{ \bx_{\bk} \}$, 
we will be able to show that, for $k_\ell >> k_j$ for all $j \neq \ell$, there holds 
\[
	f_\ell \left( h_\ell^{(k_\ell)} \right) \hF [ u_{\bh_{\bk}} , \bx_{\bk} ] 
	> f_\ell \left( h_\ell^{(k_\ell)} \right) F \left( D^2 \varphi(\bx_0) , \nabla \varphi(\bx_0) , \varphi(\bx_0) , \bx_{\bk} \right)
\] 
when $\min \bk$ is sufficiently large. 
The bound $0 \geq F[\varphi](\bx_0)$ follows since $f_\ell \left( h_\ell^{(k_\ell)} \right) > 0$ and $\bx_{\bk} \to \bx_0$.  

In order to move derivatives onto $\varphi$, we can simply use the elliptic structure and Lipschitz continuity of $F$ 
while exploiting \eqref{momentii} and the fact that 
\begin{equation} \label{f1d10}
	-f_\ell \left(h_\ell^{(k_\ell)} \right) \delta_{x_{k_\ell}, h_\ell^{(k_\ell)}}^2 u_{\bh_{\bk}} (\bx_{\bk}) \geq \frac12 C_\ell
\end{equation}
for all $\min \bk$ sufficiently large.   
Indeed, using \eqref{moment_ii_2h} and \eqref{viscosity_h} in \eqref{hatF}, there holds 
{\small 
\begin{align*}
	0 & = \hF [ u_{\bh_{\bk}} , \bx_{\bk} ] \\ 
	& = F \left( D^2 \varphi(\bx_0) , \nabla \varphi(\bx_0) , \varphi(\bx_0) , \bx_{\bk} \right) \\ 
	& \quad + \hF [ u_{\bh_{\bk}} , \bx_{\bk} ] 
		- F \left( D^2 \varphi(\bx_0) , \nabla \varphi(\bx_0) , \varphi(\bx_0) , \bx_{\bk} \right) \\ 
	& = F \left( D^2 \varphi(\bx_0) , \nabla \varphi(\bx_0) , \varphi(\bx_0) , \bx_{\bk} \right) \\ 
	& \quad + F \left( \overline{D}_{\bh_{\bk}}^2 u_{\bh_{\bk}}(\bx_{\bk}) , 
		\overline{\nabla}_{\bh_{\bk}} u_{\bh_{\bk}}(\bx_{\bk}) , u_{\bh_{\bk}}(\bx_{\bk}) , \bx_{\bk} \right) 		
		- F \left( D^2 \varphi(\bx_0) , \nabla \varphi(\bx_0) , \varphi(\bx_0) , \bx_{\bk} \right) \\ 
	& \quad - \beta \sum_{i \neq \ell}^d h_{i}^{(k_i)} \delta_{x_i, h_i^{(k_i)}}^2 u_{\bh_{\bk}}(\bx_{\bk}) 
		+ \gamma \sum_{i=1}^d \sum_{j=1 \atop (i,j) \neq (\ell,\ell)}^d \left( \left[ \widetilde{D}_{\bh_{\bk}}^2 \right]_{ij} 
				- \left[ \widehat{D}_{\bh_{\bk}}^2 \right]_{ij} \right) u_{\bh_{\bk}}(\bx_{\bk}) \\ 
	& \quad 
		- \beta h_{\ell}^{(k_\ell)} \delta_{x_\ell, h_\ell^{(k_\ell)}}^2 u_{\bh_{\bk}}(\bx_{\bk})
		+ \gamma \left( \left[ \widetilde{D}_{\bh_{\bk}}^2 \right]_{\ell \ell} 
				- \left[ \widehat{D}_{\bh_{\bk}}^2 \right]_{\ell \ell} \right) u_{\bh_{\bk}}(\bx_{\bk}) \\ 
	& \geq F \left( D^2 \varphi(\bx_0) , \nabla \varphi(\bx_0) , \varphi(\bx_0) , \bx_{\bk} \right) \\ 
	& \quad - 2(K^{**} + \gamma) \sum_{i=1}^d \sum_{j=1 \atop (i,j) \neq (\ell,\ell)}^d
		\left( \left| \left[ \widetilde{D}_{\bh_{\bk}}^2 u_{\bh_{\bk}}(\bx_{\bk}) \right]_{ij} \right| 
			+ \left| \left[ \widehat{D}_{\bh_{\bk}}^2 u_{\bh_{\bk}}(\bx_{\bk}) \right]_{ij} \right| 
			+ \left| \left[ D^2 \varphi(\bx_0) \right]_{ij} \right| \right) \\ 
	& \quad - K^{**} \left| \varphi_{x_\ell x_\ell} (\bx_0) \right| 
		- K^* \sum_{i=1}^d \left( \left| \delta_{x_i, h_i^{(k_i)}} u_{\bh_{\bk}}(\bx_{\bk}) \right| 
			+ \left| \varphi_{x_i} (\bx_0) \right| \right) \\ 
	& \quad - K^0 \left( \left| u_{\bh_{\bk}}(\bx_{\bk}) \right| + \left| \varphi(\bx_0) \right| \right) 
		- \beta \sum_{i \neq \ell}^d \left| h_{i}^{(k_i)} \delta_{x_i, h_i^{(k_i)}}^2 u_{\bh_{\bk}}(\bx_{\bk}) \right| 
		 \\ 
	& \quad 
		- \left( k_{**} + \beta h_{\ell}^{(k_\ell)} \right) \delta_{x_\ell, h_\ell^{(k_\ell)}}^2 u_{\bh_{\bk}}(\bx_{\bk})
		+ (2 \gamma - K^{**}) \left( \delta_{x_{k_\ell}, 2 h_\ell^{(k_\ell)}}^2 - \delta_{x_{k_\ell}, h_\ell^{(k_\ell)}}^2 \right) u_{\bh_{\bk}} (\bx_{\bk}) . 
\end{align*}
}
Let $h_{\bk}' = \min_{i \neq \ell} h_i^{(k_i)}$. 
Since $\| D^2 \varphi (\bx_0)\|_{\max}$, $\| \nabla \varphi(\bx_0) \|_{\max}$, $|\varphi(\bx_0)|$, 
and $| u_{\bh_{\bk}} (\bx_{\bk})|$ 
are uniformly bounded, 
there exists a constant $M > 0$ such that  
\begin{align} \label{case2_eq}
	0 & = f_\ell \left( h_\ell^{(k_\ell)} \right) \hF [ u_{\bh_{\bk}} , \bx_{\bk} ] \\ 
	\nonumber & 
	\geq f_\ell \left( h_\ell^{(k_\ell)} \right) F \left( D^2 \varphi(\bx_0) , \nabla \varphi(\bx_0) , \varphi(\bx_0) , \bx_{\bk} \right) 
		- f_\ell \left( h_\ell^{(k_\ell)} \right) \frac{M}{\left( h_{\bk}' \right)^2} \\ 
	\nonumber & \quad 
		- 4(K^{**} + \gamma) f_\ell \left( h_\ell^{(k_\ell)} \right) \sum_{i \neq \ell}^d
		\left( \left| \left[ \widetilde{D}_{\bh_{\bk}}^2 u_{\bh_{\bk}}(\bx_{\bk}) \right]_{\ell i} \right| 
			+ \left| \left[ \widehat{D}_{\bh_{\bk}}^2 u_{\bh_{\bk}}(\bx_{\bk}) \right]_{\ell i} \right| \right) \\ 
	\nonumber & \quad 
		- K^* f_\ell \left( h_\ell^{(k_\ell)} \right) \left| \delta_{x_\ell, h_\ell^{(k_\ell)}} u_{\bh_{\bk}}(\bx_{\bk}) \right| 
		+ \frac14 \left( k_{**} + \beta h_{\ell}^{(k_\ell)} \right) C_\ell  
\end{align}
for all $\min \bk$ sufficiently large by \eqref{momentii_bound} and \eqref{f1d10}. 
In order to bound the negative terms in \eqref{case2_eq} by $\frac14 \left( k_{**} + \beta h_{\ell}^{(k_\ell)} \right) C_\ell$, 
we will consider three subcases depending on how the function $f_\ell(h)$ behaves as $h \to 0^+$.

\smallskip 
\underline{\em Subcase {\rm (iia)}}: Suppose $\liminf_{k_\ell \to \infty} \frac{f_\ell \left( h_\ell^{(k_\ell)}\right)}{h_\ell^{(k_\ell)}} 
= 0$.  Then, by \eqref{discrete_hess_explicit} and the stability of the scheme, 
there exists a constant $M' > 0$ independent of $h_{\ell}^{(k_\ell)}$ and a subsequence (not relabelled) such that
\begin{subequations}
\begin{align}
	& \frac{f_\ell \left( h_\ell^{(k_\ell)} \right)}{h_\ell^{(k_\ell)}} 
		\left| h_\ell^{(k_\ell)} \left[ \widetilde{D}_{\bh_{\bk}}^2 u_{\bh_{\bk}}(\bx_{\bk}) \right]_{\ell i} \right| 
		\leq \frac{f_\ell \left( h_\ell^{(k_\ell)} \right)}{h_\ell^{(k_\ell)}} \frac{M'}{ h_{\bk}' } , \\ 
	& \frac{f_\ell \left( h_\ell^{(k_\ell)} \right)}{h_\ell^{(k_\ell)}} 
		\left| h_\ell^{(k_\ell)} \left[ \widehat{D}_{\bh_{\bk}}^2 u_{\bh_{\bk}}(\bx_{\bk}) \right]_{\ell i} \right| 
		\leq \frac{f_\ell \left( h_\ell^{(k_\ell)} \right)}{h_\ell^{(k_\ell)}} \frac{M'}{ h_{\bk}' } , \\ 
	& \frac{f_\ell \left( h_\ell^{(k_\ell)} \right)}{h_\ell^{(k_\ell)}} 
		\left| h_\ell^{(k_\ell)} \delta_{x_\ell, h_\ell^{(k_\ell)}} u_{\bh_{\bk}}(\bx_{\bk}) \right| 
		\leq \frac{f_\ell \left( h_\ell^{(k_\ell)} \right)}{h_\ell^{(k_\ell)}} M' 
\end{align}
\end{subequations}
for all $i \neq \ell$ and $\min \bk$ sufficiently large. 
Therefore, there exists indices $K_0$, $K_\ell$ with $K_\ell >> K_0$ such that for $k_i = K_0$ for all $i \neq \ell$, 
there holds 
\begin{align} \label{2abound}
	\frac{f_\ell \left( h_\ell^{(k_\ell)} \right)}{h_\ell^{(k_\ell)}} 
	\bigg[ 8 d \left( 1 + K^{**} + \gamma + K^* \right) \frac{M + M'}{ \left( h_{\bk}' \right)^2 } \bigg] 
	< \frac{k_{**} C_\ell}{8}
\end{align} 
for all $k_\ell > K_\ell$.  
Plugging \eqref{2abound} into \eqref{case2_eq}, it follows that, for some index $\bk$, 
\begin{align*} 
	0 & \geq f_\ell \left( h_\ell^{(k_\ell)} \right) F \left( D^2 \varphi(\bx_0) , \nabla \varphi(\bx_0) , \varphi(\bx_0) , \bx_{\bk} \right) 
		+ \frac18 \left( k_{**} + \beta h_\ell^{(k_\ell)} \right) C_\ell  \\ 
	& >  f_\ell \left( h_\ell^{(k_\ell)} \right) F \left( D^2 \varphi(\bx_0) , \nabla \varphi(\bx_0) , \varphi(\bx_0) , \bx_{\bk} \right) 
\end{align*}
from which we can conclude $0 \geq F \left( D^2 \varphi(\bx_0) , \nabla \varphi(\bx_0) , \varphi(\bx_0) , \bx_0 \right)$. 

\smallskip 
\underline{\em Subcase {\rm (iib)}}:  Suppose 
$0 < C_b \leq \liminf_{k_\ell \to \infty} \frac{f_\ell \left( h_\ell^{(k_\ell)}\right)}{h_\ell^{(k_\ell)}} \leq C_B < \infty$ 
for some constants $C_b$, $C_B$.  
  
Assume that there exists sequences $\eta_{\bk} \to \mathbf{0}^+$ and $\bz_{\bk} \to \bx_0$ such that 
\[
	\limsup_{\min \bk \to \infty} \big( 
	\left|  u_{\eta_{\bk}}(\bz_{\bk}) - u_{\eta_{\bk}}(\bz_{\bk} - \eta_{k_\ell} \mathbf{e}_\ell) \right| 
	+ \left|  u_{\eta_{\bk}}(\bz_{\bk}) - u_{\eta_{\bk}}(\bz_{\bk} + \eta_{k_\ell} \mathbf{e}_\ell) \right| \big) > 0 . 
\] 
Then, by the upper semi-continuity and definition of $\ou$, there exists a choice for the sequences 
$\eta_{\bk} \to \mathbf{0}^+$ and $\bz_{\bk} \to \bx_0$ and a constant $C_0 > 0$ such that 
\[
	\lim_{\min \bk \to \infty} \left[ u_{\eta_{\bk}}(\bz_{\bk} - \eta_{k_\ell} \mathbf{e}_\ell) - 2 u_{\eta_{\bk}}(\bz_{\bk}) 
		+ u_{\eta_{\bk}}(\bz_{\bk} + \eta_{k_\ell} \mathbf{e}_\ell) \right] = - C_0 , 
\]
and it follows that 
\begin{align*}
	& \limsup_{\min \bk \to \infty} f_\ell \left( \eta_\ell^{(k_\ell)} \right) 
		\delta_{x_\ell, \eta_\ell^{(k_\ell)}}^2 u_{\eta_{\bk}}(\bz_{\bk}) \\ 
	& \quad = \limsup_{\min \bk \to \infty} \frac{f_\ell \left( \eta_\ell^{(k_\ell)} \right)}{\eta_\ell^{(k_\ell)}} 
		\left[ \left(\eta_\ell^{(k_\ell)} \right)^2 \delta_{x_\ell, \eta_\ell^{(k_\ell)}}^2 u_{\eta_{\bk}}(\bz_{\bk}) \right] 
			\frac{1}{\eta_\ell^{(k_\ell)}} \\ 
	& \quad \leq - C_b \frac{C_0}{2} \liminf_{\min \bk \to \infty} \frac{1}{\eta_\ell^{(k_\ell)}} 
	= - \infty , 
\end{align*}
a contradiction to \eqref{rate_ou}.  
Thus, we must have 
\begin{equation} \label{ou_cont}
	\limsup_{\min \bk \to \infty} \big( 
	\left|  u_{\eta_{\bk}}(\bz_{\bk}) - u_{\eta_{\bk}}(\bz_{\bk} - \eta_{k_\ell} \mathbf{e}_\ell) \right| 
	+ \left|  u_{\eta_{\bk}}(\bz_{\bk}) - u_{\eta_{\bk}}(\bz_{\bk} + \eta_{k_\ell} \mathbf{e}_\ell) \right| \big) = 0 . 
\end{equation}

Applying \eqref{ou_cont}, we have 
\begin{align*}
	0 = \limsup_{\min \bk \to \infty} \bigg( & 
	\left| u_{\bh_{\bk}} (\bx_{\bk} + h_\ell^{(k_\ell)} \mathbf{e}_\ell \pm h_i^{(k_i)} \mathbf{e}_i ) 
		- u_{\bh_{\bk}} (\bx_{\bk} \pm h_i^{(k_i)} \mathbf{e}_i ) \right| \\ 
	& \qquad + \left| u_{\bh_{\bk}} (\bx_{\bk} - h_\ell^{(k_\ell)} \mathbf{e}_\ell \pm h_i^{(k_i)} \mathbf{e}_i ) 
		- u_{\bh_{\bk}} (\bx_{\bk}  \pm h_i^{(k_i)} \mathbf{e}_i ) \right| \\ 
	& \qquad + \left| u_{\bh_{\bk}} (\bx_{\bk} \pm h_\ell^{(k_\ell)} \mathbf{e}_\ell ) 
		- u_{\bh_{\bk}} (\bx_{\bk} ) \right| \bigg) . 
\end{align*}
Therefore, by \eqref{discrete_hess_explicit}, 
there exists indices $K_0$, $K_\ell$ with $K_\ell >> K_0$ such that for $k_i = K_0$ for all $i \neq \ell$, 
there holds 
\begin{subequations} \label{2bbound}
\begin{align}
	& \frac{f_\ell \left( h_\ell^{(k_\ell)} \right)}{h_\ell^{(k_\ell)}} 
		\left| h_\ell^{(k_\ell)} \left[ \widetilde{D}_{\bh_{\bk}}^2 u_{\bh_{\bk}}(\bx_{\bk}) \right]_{\ell i} \right| 
		\leq 2 C_B \left| h_\ell^{(k_\ell)} \left[ \widetilde{D}_{\bh_{\bk}}^2 u_{\bh_{\bk}}(\bx_{\bk}) \right]_{\ell i} \right| \\ 
		\nonumber & \qquad \leq \frac{k_{**}C_\ell}{64d(K^{**} + \gamma)} , \\ 
	& \frac{f_\ell \left( h_\ell^{(k_\ell)} \right)}{h_\ell^{(k_\ell)}} 
		\left| h_\ell^{(k_\ell)} \left[ \widehat{D}_{\bh_{\bk}}^2 u_{\bh_{\bk}}(\bx_{\bk}) \right]_{\ell i} \right| 
		\leq 2 C_B \left| h_\ell^{(k_\ell)} \left[ \widehat{D}_{\bh_{\bk}}^2 u_{\bh_{\bk}}(\bx_{\bk}) \right]_{\ell i} \right| \\ 
		\nonumber & \qquad \leq \frac{k_{**}C_\ell}{64d(K^{**} + \gamma)} , \\ 
	& \frac{f_\ell \left( h_\ell^{(k_\ell)} \right)}{h_\ell^{(k_\ell)}} 
		\left| h_\ell^{(k_\ell)} \delta_{x_\ell, h_\ell^{(k_\ell)}} u_{\bh_{\bk}}(\bx_{\bk}) \right| 
		\leq 2 C_B \left| h_\ell^{(k_\ell)} \delta_{x_\ell, h_\ell^{(k_\ell)}} u_{\bh_{\bk}}(\bx_{\bk}) \right| \\ 
		\nonumber & \qquad 	\leq \frac{k_{**}C_\ell}{64d K^{*}} , \\ 
	& f_\ell \left( h_\ell^{(k_\ell)} \right) \frac{M}{\left( h_{\bk}' \right)^2} 
		\leq \frac{k_{**}C_\ell}{64d} 
\end{align}
\end{subequations}
for all $k_\ell > K_\ell$ 
for some subsequence (not relabelled) such that 
\[
	\limsup_{k_\ell \to \infty} \frac{f_\ell \left( h_\ell^{(k_\ell)}\right)}{h_\ell^{(k_\ell)}} \leq C_B . 
\]  
Plugging \eqref{2bbound} into \eqref{case2_eq}, it follows that, for some index $\bk$, 
\begin{align*} 
	0 & \geq f_\ell \left( h_\ell^{(k_\ell)} \right) F \left( D^2 \varphi(\bx_0) , \nabla \varphi(\bx_0) , \varphi(\bx_0) , \bx_{\bk} \right) 
		+ \frac18 \left( k_{**} + \beta h_{\ell}^{(k_\ell)} \right) C_\ell  \\ 
	& >  f_\ell \left( h_\ell^{(k_\ell)} \right) F \left( D^2 \varphi(\bx_0) , \nabla \varphi(\bx_0) , \varphi(\bx_0) , \bx_{\bk} \right) 
\end{align*}
from which we can conclude $0 \geq F \left( D^2 \varphi(\bx_0) , \nabla \varphi(\bx_0) , \varphi(\bx_0) , \bx_0 \right)$. 

\smallskip 
\underline{\em Subcase {\rm (iic)}}: Suppose $\lim_{k_\ell \to \infty} \frac{f_\ell \left( h_\ell^{(k_\ell)}\right)}{h_\ell^{(k_\ell)}} 
= \infty$.  Without a loss of generality, we also assume 
$\lim_{k_j \to \infty} \frac{f_j \left( h_j^{(k_j)}\right)}{h_j^{(k_j)}} = \infty$ for all $j \neq \ell$ 
for which there exists sequences $\eta_{\bk} \to \mathbf{0}^+$ and $\bz_{\bk} \to \bx_0$ such that 
$\liminf_{\min \bk \to \infty} \delta_{x_j, \eta_j^{(k_j)}}^2 u_{\eta_{\bk}}(\bz_{\bk}) = - \infty$.  
Otherwise, we could repeat the same arguments in Subcase {\rm (iia)} or Subcase {\rm (iib)} 
with the index $\ell$ replaced by the index $j$ to show that 
$0 \geq F \left( D^2 \varphi(\bx_0) , \nabla \varphi(\bx_0) , \varphi(\bx_0) , \bx_0 \right)$ 
since we would analogously have 
\begin{align*} 
	0 & >  f_j \left( h_j^{(k_j)} \right) F \left( D^2 \varphi(\bx_0) , \nabla \varphi(\bx_0) , \varphi(\bx_0) , \bx_{\bk} \right) 
\end{align*}
for some index $\bk$ with $f_j$ defined by \eqref{f1_rate}.  
Consequently, by Subcase {\rm (iib)}, we can assume $\ou$ is continuous at $\bx_0$. 

By assumption, there exists a function $g_\ell$ such that 
\begin{subequations} \label{g_rate}
\begin{align}
	& g_\ell(0) = 0 , \\ 
	& g_\ell \text{ is strictly increasing} , \\ 
	& C_c \leq \liminf_{h \to 0^+} \frac{f_\ell(h) g_\ell(h)}{h} \leq \limsup_{h \to 0^+} \frac{f_\ell(h) g_\ell(h)}{h} \leq C_C 
\end{align}
\end{subequations} 
for some constants $0 < C_c \leq C_C < \infty$, 
i.e., $f_\ell g_\ell \in \mathcal{O}(h)$. 
We first show there holds  
$\limsup_{\min \bk \to \infty} 
\left| \delta_{x_\ell, h_\ell^{(k_\ell)}}^+ u_{\bh_{\bk}}(\bx_{\bk}) - \delta_{x_\ell, h_\ell^{(k_\ell)}}^- u_{\bh_{\bk}}(\bx_{\bk}) \right| = 0$.  

Suppose there exists sequences $\eta_{\bk} \to \mathbf{0}^+$ and $\bz_{\bk} \to \bx_0$ and a constant $C'$ such that 
\[
	\lim_{\min \bk \to \infty} \big( \delta_{x_\ell, \eta_\ell^{(k_\ell)}}^+ u_{\eta_{\bk}}(\bz_{\bk}) 
		- \delta_{x_\ell, \eta_\ell^{(k_\ell)}}^- u_{\eta_{\bk}}(\bz_{\bk}) \big) 
		= -C' \neq 0 , 
\]
i.e., there exists a path approaching $\bx_0$ over which $\ou$ would have a corner along the $x_\ell$ direction.  
Then, since $\ou$ can be touched from above by a smooth function, we must have $C' > 0$, 
and it follows that 
\begin{align*}
	& \limsup_{\min \bk \to \infty} 
		f_\ell \left( \eta_\ell^{(k_\ell)} \right) \delta_{x_\ell, \eta_\ell^{(k_\ell)}}^2 u_{\eta_{\bk}}(\bz_{\bk}) \\ 
	& \quad = \limsup_{\min \bk \to \infty} \frac{f_\ell \left( \eta_\ell^{(k_\ell)} \right)}{\eta_\ell^{(k_\ell)}} 
		\left[ \delta_{x_\ell, \eta_\ell^{(k_\ell)}}^+ u_{\eta_{\bk}}(\bz_{\bk}) 
		- \delta_{x_\ell, \eta_\ell^{(k_\ell)}}^-u_{\eta_{\bk}}(\bz_{\bk}) \right] \\ 
	& \quad = \limsup_{\min \bk \to \infty} 
		\frac{f_\ell \left( \eta_\ell^{(k_\ell)} \right) g_\ell \left( \eta_\ell^{(k_\ell)} \right)}{\eta_\ell^{(k_\ell)}} 
		\left[ \delta_{x_\ell, \eta_\ell^{(k_\ell)}}^+ u_{\eta_{\bk}}(\bz_{\bk}) 
		- \delta_{x_\ell, \eta_\ell^{(k_\ell)}}^- u_{\eta_{\bk}}(\bz_{\bk}) \right] 
		\frac{1}{g_\ell \left( \eta_\ell^{(k_\ell)} \right)} \\ 
	& \quad \geq - \frac{C_c C'}{2} \liminf_{\min \bk \to \infty} \frac{1}{g_\ell \left( \eta_\ell^{(k_\ell)} \right)} 
	= - \infty , 
\end{align*}
a contradiction to \eqref{rate_ou}.  
Therefore, we must have 
\begin{equation} \label{oux1}
	\limsup_{\min \bk \to \infty} 
	\left|  \delta_{x_\ell, \eta_\ell^{(k_\ell)}}^+ u_{\eta_{\bk}}(\bz_{\bk}) - \delta_{x_\ell, \eta_\ell^{(k_\ell)}}^- u_{\eta_{\bk}}(\bz_{\bk}) \right| = 0  
\end{equation}
for all sequences $\eta _{\bk} \to \mathbf{0}^+$ and $\bz_{\bk} \to \bx_0$.  

For the remainder of Subcase {\rm (iic)} we choose the maximizing sequences 
$\bh_{\bk}$, $\bx_{\bk}$, and $\bz_{\bk}$ constructed in \eqref{max_seq} and satisfying \eqref{ops_bounds} 
with $\liminf_{\min \bk \to \infty} \delta_{x_{\ell}, h_{\ell}^{(k_{\ell})}}^2 u_{\bh_{\bk}}(\bz_{\bk}) = - \infty$ 
as discussed at the beginning of Case {\rm (ii)}.  
The strategies in Subcases {\rm (iia)} and {\rm (iib)}
relied upon having a sequence diverge at a sufficiently high rate.  
Since no such sequence exists, we can exploit the extra structure of the maximizing sequence 
without having non-signed terms dominate in \eqref{case2_eq}.   
In particular, we will use the fact that $\ou$ is touched from above by a smooth function at $\bx_0$ 
to show $\nabla_{\bh_{\bk}} u_{\bh_{\bk}}(\bx_{\bk}) \to \nabla \varphi(\bx_0)$.  

Notationally, we let $\widetilde{f}_\ell$ denote the rate function and select a subsequence such that 
\begin{align*}
	\lim_{\min \bk \to \infty} \widetilde{f}_\ell \left( h_\ell^{(k_\ell)} \right) 
		\delta_{x_\ell, h_\ell^{(k_\ell)}}^2 u_{\bh_{\bk}}(\bz_{\bk}) 
	= \liminf_{\min \bk \to \infty} \widetilde{f}_\ell \left( h_\ell^{(k_\ell)} \right)  
		\delta_{x_\ell, h_\ell^{(k_\ell)}}^2 u_{\bh_{\bk}}(\bz_{\bk}) 
	= - \widetilde{C}_\ell 
\end{align*}
for some constant $\widetilde{C}_\ell > 0$.  
By \eqref{ops_bounds} and \eqref{oux1}, we must have 
\begin{equation} \label{iic-grad}
	\lim_{\min \bk \to \infty} 
	\delta_{x_i, h_i^{(k_i)}}^\pm u_{\bh_{\bk}}(\bz_{\bk} ) = \varphi_{x_i}(\bx_0)  
\end{equation} 
for all $i=1,2,\ldots,d$.  

We now show that 
$\left| \delta_{x_\ell, h_\ell^{(k_\ell)}}^+ u_{\bh_{\bk}}(\bz_{\bk} \pm h_i^{(k_i)} \mathbf{e}_i) \right|$ 
is uniformly bounded.  
Suppose there exists an index $i \neq \ell$ such that 
\[
	\limsup_{\min \bk \to \infty} 
	\left| \delta_{x_\ell, h_\ell^{(k_\ell)}}^+ u_{\bh_{\bk}}(\bz_{\bk} \pm h_i^{(k_i)} \mathbf{e}_i) \right| = \infty .  
\]
Assume 
$\lim_{\min \bk \to \infty} \delta_{x_\ell, h_\ell^{(k_\ell)}}^+ u_{\bh_{\bk}}(\bz_{\bk} + h_i^{(k_i)} \mathbf{e}_i) = \infty$ 
for some subsequence (not relabelled).  
The case when the sequence diverges to $- \infty$ will be discussed in the later.  

A simple computation reveals 
\begin{align}\label{iic_blowup}
	\delta_{x_\ell, h_\ell^{(k_\ell)}}^+ u_{\bh_{\bk}}(\bz_{\bk} + h_i^{(k_i)} \mathbf{e}_i) 
	& = \frac{u_{\bh_{\bk}}(\bz_{\bk} + h_i^{(k_i)} \mathbf{e}_i + h_\ell^{(k_\ell)} \mathbf{e}_\ell )  
		- u_{\bh_{\bk}}(\bz_{\bk} ) }{h_\ell^{(k_\ell)}} \\ 
		\nonumber & \qquad - \frac{h_i^{(k_i)}}{h_\ell^{(k_\ell)}} \delta_{x_i, h_i^{(k_i)}}^+ u_{\bh_{\bk}}(\bz_{\bk} ) . 
\end{align}
Choose a subsequence with a single index $k$ such that 
\[
	c_h \leq \frac{h_i^{(k)}}{h_\ell^{(k)}} \leq C_h 
\]
for some constants $0 < c_h < C_h$ independent of $k$, 
i.e., choose a quasi-uniform subsequence of $\bh_{\bk}$.  
Define $\delta_{\eta_\ell^i, \bh_k}^+$ by 
\[
	\delta_{\eta_\ell^i, \bh_k}^+ v(x) \equiv 
	\frac{v(\bz_{k} + h_i^{(k)} \mathbf{e}_i + h_\ell^{(k)} \mathbf{e}_\ell )  - v (\bz_{k} ) }{
		\sqrt{\left(h_\ell^{(k)} \right)^2 + \left( h_i^{(k)} \right)^2}} 
\]
and the constant $c_k \in \left[1 , \frac{\sqrt{1+C_h^2}}{c_h} \right]$ such that 
\[
	c_k h_\ell^{(k)} = \sqrt{\left(h_\ell^{(k)} \right)^2 + \left( h_i^{(k)} \right)^2} . 
\]
Then, by \eqref{iic_blowup}, there exists a constant $C' > 0$ such that  
\begin{align*}
	\delta_{\eta_\ell^i, \bh_k}^+ \left( u_{\bh_{k}} - \varphi \right)(\bz_{k}) 
	& = \frac{1}{c_k} \delta_{x_\ell, h_\ell^{(k)}}^+ \left( u_{\bh_{k}} - \varphi \right) (\bz_{k} + h_i^{(k)} \mathbf{e}_i) 
		+ \frac{h_i^{(k)}}{c_k h_\ell^{(k)}} \delta_{x_i, h_i^{(k)}}^+ \left( u_{\bh_{k}} - \varphi \right) (\bz_{k} ) \\ 
	& \geq 
		\delta_{x_\ell, h_\ell^{(k)}}^+ u_{\bh_{k}}(\bz_{k} + h_i^{(k)} \mathbf{e}_i) 
		- C' 
\end{align*} 
for all $k$ sufficiently large.  
Hence, 
\[
	\delta_{\eta_\ell^i, \bh_k}^+ \left( u_{\bh_{k}} - \varphi \right)(\bz_{k}) \to \infty , 
\]
a contradiction to \eqref{uk_local_max}.  
Therefore, 
$\limsup_{\min \bk \to \infty} \delta_{x_\ell, h_\ell^{(k_\ell)}}^+ u_{\bh_{\bk}}(\bz_{\bk} + h_i^{(k_i)} \mathbf{e}_i) < \infty$.  

We can analogously show 
$\liminf_{\min \bk \to \infty} \delta_{x_\ell, h_\ell^{(k_\ell)}}^+ u_{\bh_{\bk}}(\bz_{\bk} + h_i^{(k_i)} \mathbf{e}_i) > - \infty$ 
by assuming 
it diverges and arriving at the contradiction 
$\delta_{\xi_\ell^i, \bh_k}^- \left( u_{\bh_{k}} - \varphi \right)(\bz_{k}) \to - \infty$ 
for the analogous operator $\delta_{\xi_\ell^i, \bh_k}^-$ 
defined by 
\[
	\delta_{\xi_\ell^i, \bh_k}^- v(x) \equiv 
	\frac{v (\bz_{k} ) - v(\bz_{k} + h_i^{(k)} \mathbf{e}_i - h_\ell^{(k)} \mathbf{e}_\ell ) }{
		\sqrt{\left(h_\ell^{(k)} \right)^2 + \left( h_i^{(k)} \right)^2}} . 
\]
The argument now uses \eqref{oux1} to guarantee the existence of a subsequence such that  
$\delta_{x_\ell, h_\ell^{(k_\ell)}}^- u_{\bh_{\bk}}(\bz_{\bk} + h_i^{(k_i)} \mathbf{e}_i) \to - \infty$ 
with 
$\delta_{\xi_\ell^i, \bh_k}^-  u_{\bh_{\bk}}(\bz_{\bk}) 
\leq \delta_{x_\ell, h_\ell^{(k_\ell)}}^- u_{\bh_{\bk}}(\bz_{\bk} + h_i^{(k_i)} \mathbf{e}_i) + C'$.  
Therefore, we have 
\begin{equation} \label{oux2}
	\limsup_{\min \bk \to \infty} 
	\left| \delta_{x_\ell, h_{k_\ell}}^\mu u_{\bh_{\bk}}(\bz_{\bk} \pm h_i^{(k_i)} \mathbf{e}_i) \right| < \infty 
	\qquad \mu \in \{ + , - \}
\end{equation} 
as a consequence of \eqref{oux1}.

Applying \eqref{oux2}, there exists a constant $C'' > 0$ independent of $h_{\ell}^{(k_\ell)}$ 
and a subsequence (not relabelled) such that
\begin{align*}
	\left| \left[ D_{\bh_{\bk}}^{\mu \nu} u_{\bh_{\bk}}(\bz_{\bk}) \right]_{\ell i} \right| 
	& = \left| \delta_{x_i, h_i^{(k_i)}}^\nu \delta_{x_\ell, h_\ell^{(k_\ell)}}^\mu u_{\bh_{\bk}}(\bx_{\bk}) \right| 
	\leq \frac{4}{h_{\bk}'} C'' 
\end{align*}
for all $i \neq \ell$, $\mu, \nu \in \{+,-\}$, and $\min \bk$ sufficiently large. 
By \eqref{fdxy}, it follows that 
there exists a constant $M'' > 0$ independent of $h_{\ell}^{(k_\ell)}$ and a subsequence (not relabelled) such that
\begin{subequations}
\begin{align}
	& \widetilde{f}_\ell \left( h_\ell^{(k_\ell)} \right) 
		\left| \left[ \widetilde{D}_{\bh_{\bk}}^2 u_{\bh_{\bk}}(\bx_{\bk}) \right]_{\ell i} \right| 
		\leq \widetilde{f}_\ell \left( h_\ell^{(k_\ell)} \right) \frac{M''}{ h_{\bk}' } , \\ 
	& \widetilde{f}_\ell \left( h_\ell^{(k_\ell)} \right) 
		\left| \left[ \widehat{D}_{\bh_{\bk}}^2 u_{\bh_{\bk}}(\bx_{\bk}) \right]_{\ell i} \right| 
		\leq \widetilde{f}_\ell \left( h_\ell^{(k_\ell)} \right) \frac{M''}{ h_{\bk}' } , \\ 
	& \widetilde{f}_\ell \left( h_\ell^{(k_\ell)} \right) 
		\left| \delta_{x_\ell, h_\ell^{(k_\ell)}} u_{\bh_{\bk}}(\bx_{\bk}) \right| 
		\leq \widetilde{f}_\ell \left( h_\ell^{(k_\ell)} \right) M'' 
\end{align}
\end{subequations}
for all $i \neq \ell$ and $\min \bk$ sufficiently large. 
Therefore, there exists indices $K_0$, $K_\ell$ with $K_\ell >> K_0$ such that for $k_i = K_0$ for all $i \neq \ell$, 
there holds 
\begin{align} \label{2cbound}
	\widetilde{f}_\ell \left( h_\ell^{(k_\ell)} \right) 
	\bigg[ 8 d \left( 1 + K^{**} + \gamma + K^* \right) \frac{M + M''}{ \left( h_{\bk}' \right)^2 } \bigg] 
	< \frac{k_{**} C_\ell}{8}
\end{align} 
for all $k_\ell > K_\ell$.  
Plugging \eqref{2cbound} into \eqref{case2_eq} with $f_\ell$ replaced by $\widetilde{f}_\ell$, 
it follows that, for some index $\bk$, 
\begin{align*} 
	0 & \geq \widetilde{f}_\ell \left( h_\ell^{(k_\ell)} \right) 
		F \left( D^2 \varphi(\bx_0) , \nabla \varphi(\bx_0) , \varphi(\bx_0) , \bx_{\bk} \right) 
		+ \frac18 \left( k_{**} + \beta h_{\ell}^{(k_\ell)} \right) C_\ell  \\ 
	& >  \widetilde{f}_\ell \left( h_\ell^{(k_\ell)} \right) 
		F \left( D^2 \varphi(\bx_0) , \nabla \varphi(\bx_0) , \varphi(\bx_0) , \bx_{\bk} \right) 
\end{align*}
from which we can conclude $0 \geq F \left( D^2 \varphi(\bx_0) , \nabla \varphi(\bx_0) , \varphi(\bx_0) , \bx_0 \right)$. 

\smallskip 
All cases exhausted, we must have 
$0 \geq F \left( D^2 \varphi(\bx_0) , \nabla \varphi(\bx_0) , \varphi(\bx_0) , \bx_0 \right)$ 
whenever $\bx_0 \in \Omega$.  


\medskip
{\em Step 3b}:  We now consider the other scenario when $\mathbf{x}_0 \in \partial \Omega$ and show \eqref{step3ab} holds. Suppose (i) $\ou(\mathbf{x}_0) \leq g(\mathbf{x}_0)$. Then
\[
	B \left[ \varphi \right](\mathbf{x}_0) 
	= \varphi(\mathbf{x}_0) - g(\mathbf{x}_0) 
	= \ou(\mathbf{x}_0) - g(\mathbf{x}_0) 
	\leq 0 . 
\]
Hence, the assertion holds. 

Suppose (ii) there exists a sequence $\{ \bx_{\bk} \} \to \bx_0$ such that 
$\bx_{\bk} \in \cT_{\bk_{\bk}} \cap \partial \Omega$ 
for all $\bk$ and $u_{\bh_{\bk}} (\bx_{\bk}) \to \ou(\bx_0)$.  
Then $u_{\bh_{\bk}}(\bx_{\bk}) = u_{\bh_{\bk}} (\bx_{b_{\bk}}) = g(\bx_{b_{\bk}})$ 
for some boundary node $\bx_{b_{\bk}}$. By the continuity of $\varphi$ and $g$ 
and the fact $\ou(\bx_0) = \lim_{\min \bk \to \infty} u_{\bh_{\bk}}(\bx_{b_{\bk}})$, 
it follows that $\varphi(\bx_0) = \ou(\bx_0) = g(\bx_0)$ 
and we have $B[ \varphi](\bx_0) \leq 0$ by (i).

Suppose (iii) there exists a subsequence (not relabeled)
$\{\bx_{\bk}\} \in \cT_{\mathbf{h}_{\bk}} \cap \Omega$ 
for which $u_{\bh_{\bk}}(\bx_{\bk}) = u_{\bh_{\bk}}(\mathbf{x}_{\alpha_{\bk}})$ 
for some interior node $\mathbf{x}_{\alpha_{\bk}}$ for all $\bk$. 
If $\ou(\bx_0) \leq g(\bx_0)$, we have $B \left[ \varphi \right](\mathbf{x}_0) \leq 0$ by (i).  
Thus, we can assume $\ou(\mathbf{x}_0) > g(\mathbf{x}_0)$.  
Using the same argument as in {\em Step 3a} and the ellipticity of $F$, we can show 
\[
	0 \geq 
	F \bigl( D^2 \varphi(\mathbf{x}_0) , \nabla \varphi(\mathbf{x}_0) , \varphi(\mathbf{x}_0) , 
		\mathbf{x}_0 \bigr) 
\]
by using the scheme directly with $0 = \hF[u_{\bh_{\bk}}, \mathbf{x}_{\alpha_{\bk}}]$ 
for $u_{\bh_{\bk}}(\mathbf{x}_{\alpha_{\bk}}) \to \ou(\bx_0)$.

Combing (i), (ii), and (iii) which represent all (not mutually exclusive) scenarios, 
we have $\ou$ is a subsolution of the boundary condition in the viscosity sense 
since we either have 
\[
	B[ \varphi](\mathbf{x}_0) \leq 0 
\]
or 
\[
	F \bigl( D^2 \varphi(\mathbf{x}_0) , \nabla \varphi(\mathbf{x}_0) , \varphi(\mathbf{x}_0) , 
		\mathbf{x}_0 \bigr) \leq 0 . 
\]

\medskip
{\em Step 4}: We consider the case of a general test function $\varphi \in C^2(\oOme)$ which
is alluded to in {\em Step 3}. Recall that $\ou-\varphi$ is assumed to have 
a local maximum at $\mathbf{x}_0$. Using Taylor's formula we write
\begin{align*}
\varphi(\mathbf{x}) &=  \varphi(\mathbf{x}_0) + \nabla \varphi(\mathbf{x}_0) \cdot (\mathbf{x}-\mathbf{x}_0) 
+\frac12 ( \mathbf{x} - \mathbf{x}_0)^T D^2 \varphi(\mathbf{x}_0) (\mathbf{x}-\mathbf{x}_0) 
+ o(|\mathbf{x}-\mathbf{x}_0|^2) \\
&\equiv p(\mathbf{x}) + o(|\mathbf{x}-\mathbf{x}_0|^2). \nonumber
\end{align*}
For any $\sigma>0$, we define the following quadratic polynomial: 
\begin{align*}
p^\sigma(\mathbf{x}) &\equiv p(\mathbf{x}) + \sigma |\mathbf{x}-\mathbf{x}_0|^2 \\
&= \varphi(\mathbf{x}_0) + \nabla \varphi(\mathbf{x}_0) \cdot (\mathbf{x}-\mathbf{x}_0) 
+ (\mathbf{x} - \mathbf{x}_0)^T \bigl[\sigma \, I_{d \times d}+\frac12 D^2\varphi(\mathbf{x}_0)\bigr] 
 (\mathbf{x}-\mathbf{x}_0) .
\end{align*}

Trivially, 
$\nabla p^\sigma(\mathbf{x}) = \nabla \varphi(\mathbf{x}_0) 
+ [ 2 \sigma + D^2 \varphi(\mathbf{x}_0)] (\mathbf{x}-\mathbf{x}_0)$, 
$D^2 p^\sigma(\mathbf{x}) = 2\sigma I_{d \times d} + D^2 \varphi(x_0)$, and
$\varphi(\mathbf{x})-p^\sigma(\mathbf{x}) = o(|\mathbf{x}-\mathbf{x}_0|^2) 
- \sigma |\mathbf{x}-\mathbf{x}_0|^2 \leq 0$. 
Thus, $\varphi-p^\sigma$ 
has a local maximum at $\mathbf{x}_0$ and, therefore, $\ou- p^\sigma$ has a local
maximum at $\mathbf{x}_0$.  By the result of {\em Step 3} we have 
$F\bigl(D^2 p^\sigma(\mathbf{x}_0),
 \nabla p^\sigma(\mathbf{x}_0) , p^\sigma(\mathbf{x}_0) , \mathbf{x}_0\bigr)\leq 0$, 
 that is,  $F\bigl(2\sigma I_{d \times d} +D^2 \varphi(\mathbf{x}_0, 
\nabla \varphi(\mathbf{x}_0) , \varphi(\mathbf{x}_0) , \mathbf{x}_0 \bigr)\leq 0$.  
Taking $\liminf_{\sigma\to 0^+}$ and using the continuity of $F$ we obtain 
\begin{align*}
0 &\geq \liminf_{\sigma\to 0^+} F\bigl(2\sigma I_{d \times d} + D^2 \varphi(\mathbf{x}_0), 
\nabla \varphi (\mathbf{x}_0) , \varphi(\mathbf{x}_0) , \mathbf{x}_0 \bigr) \\
&\geq  F\bigl(D^2 \varphi (\bx_0), \nabla \varphi(\bx_0) , \varphi(\bx_0) , \bx_0\bigr).
\end{align*}
Thus, $\ou$ is a viscosity subsolution of \eqref{FD_problem}.

\medskip
{\em Step 5}: By following almost the same lines as those of {\em Steps 2-4 } 
we can show that if $\uu-\varphi$ takes a local minimum at $\mathbf{x}_0\in \overline{\Omega}$ 
for some $\varphi \in C^2(\oOme)$ with $\ou(\bx_0) = \varphi(\bx_0)$, then 
either 
\[
F\bigl(D^2 \varphi (\bx_0), \nabla \varphi (\bx_0) , \varphi(\bx_0) , \bx_0 \bigr) \geq 0 
\] 
if $\bx_0 \in \Omega$ 
or 
\begin{align*}
	F \bigl( D^2 \varphi(\mathbf{x}_0) , \nabla \varphi(\mathbf{x}_0) , \varphi(\mathbf{x}_0) , 
		\mathbf{x}_0 \bigr) &\geq 0 \\
\noalign{or}
	B \left[ \varphi \right](\mathbf{x}_0) &\geq 0 
\end{align*}
if $\bx_0 \in \partial \Omega$. 
Hence, $\uu$ is a viscosity supersolution of \eqref{FD_problem}. 

\medskip
{\em Step 6}: By the comparison principle (see Definition~\ref{comparison}), we
get $\ou\leq \uu$ on $\Omega$. On the other hand, by their definitions,
we have $\uu\leq \ou$ on $\Omega$. Thus, $\ou=\uu$, which coincides 
with the unique continuous viscosity solution $u$ of 
\eqref{FD_problem}. The proof is complete.
\end{proof}

We end this section with a couple of remarks concerning the convergence analysis.  

\begin{remark}
(a) The maximizing sequence defined by \eqref{max_seq} would be sufficient to prove convergence 
if the underlying FD scheme were monotone.  
When $\ou$ was sufficiently smooth but could be touched by a smooth function, 
we could use the maximizing sequence paired with the ellipticity of $F$ 
and the consistency of $\hF$ to move derivatives onto $\varphi$.  
When $\ou$ was not sufficiently smooth, we chose an alternative sequence/path that better 
exploited the lack of regularity for $\ou$ via the ellipticity of $F$ and the generalized monotonicity of $\hF$.   

(b) The numerical moment played the role of a nonnegative term (when choosing an appropriate subsequence) 
and allowed us to exploit the fact that $k_{**} > 0$.  
In particular, the generalized monotonicity property ensured by the numerical moment guaranteed that any 
divergent behavior corresponding to a mixed partial derivative approximation led to divergent behavior for 
an approximation of $u_{x_\ell x_\ell}$ for some Cartesian direction $x_\ell$.  
Thus, the monotonicity associated with the elliptic structure coming from the PDE could be exploited directly to 
move derivatives onto $\varphi$ via the divergence of $u_{x_\ell x_\ell}$.  

(c) Theorem \ref{FD_convergence_nd} is proved under the assumption that
the numerical scheme is admissible and $\ell^\infty$-norm stable giving potentially wider
applicability of the convergence result. 
We remark that this assumption has been 
verified for the narrow-stencil finite difference scheme proposed in this paper when $d=2,3$ 
by exploiting spectral properties of the numerical moment 
(see Sections \ref{stability_admissibility_sec} and \ref{H2-stability_sec}) 
and a discrete Sobolev embedding result. 
\end{remark}

\section{Numerical experiments} \label{FD_2D_numerics}

In this section we present several two-dimensional numerical tests 
to gauge the performance of the proposed narrow-stencil finite difference scheme for approximating 
viscosity solutions.  
All tests are performed using {\it Matlab} and the built-in nonlinear solver {\it fsolve} 
with an initial guess given by the zero function on the interior of the domain. 
The errors are measured in the $\ell^\infty$ norm.  
The examples will correspond to HJB operators independent of the gradient argument. 
We use the numerical operator $\widehat{F}$ with the numerical moment and the numerical viscosity 
given by \eqref{Abeta} for $\gamma > 0$ specified 
and the numerical viscosity coefficient $\beta = 0$. 
We note that both uniformly and degenerate elliptic cases are considered, 
and refer the interested reader to \cite{Lewis_dissertation} to see additional numerical tests 
for the Monge-Amp\`ere equation.

\subsection{Test 1: Finite Control Set} 

Consider the HJB equation \eqref{HJB} with $\Omega = (0,1)^2$, 
\begin{align*}
	A^{\theta}(x,y) \in & 
	\Bigg\{ \left[ \begin{array}{cc} -1 & 1 \\ 1 & -1 \end{array} \right] , 
	 \left[ \begin{array}{cc} -2 & 1 \\ 1 & -1 \end{array} \right] , 
	  \left[ \begin{array}{cc} -1 & - 1 \\ -1 & -1 \end{array} \right] , 
	   \left[ \begin{array}{cc} -1 & - 1 \\ -1 & -2 \end{array} \right] , \\ 
	    & \qquad 
	    \left[ \begin{array}{cc} -2 & 1 \\ 1 & -2 \end{array} \right] , 
	     \left[ \begin{array}{cc} -2 & - 1 \\ -1 & -2 \end{array} \right] , 
	      \left[ \begin{array}{cc} -2 & - 1 \\ -1 & -1 \end{array} \right] , 
	       \left[ \begin{array}{cc} -1 & - 1 \\ -1 & -2 \end{array} \right] \Bigg\} , 
\end{align*}
$\beta^\theta = \vec{0}$, $c^\theta = 0$, and $f_\theta$ and Dirichlet boundary data 
chosen such that the exact solution 
is given by $u(x,y) = \sin \bigl( 2 \pi (1.2x - y) \bigr)$.  
Then, the optimal control $A^{\theta^*}(x,y)$ corresponds to a discontinuous 
coefficient matrix.  
Furthermore, each choice for $A^\theta$ is symmetric negative semidefinite and 
the control set corresponds to a family of degenerate elliptic problems such as 
\[
	- u_{xx} - 2 u_{xy} - u_{yy} = f
\] 
when assuming $u_{xy} = u_{yx}$.  
Simple central difference methods are known to diverge for such examples as 
seen in \cite{Oberman06}.  
However, with the numerical moment, we recover convergent, non-monotone schemes.  
The results for $A = 4 \mathbf{1}$ can be found in Table~\ref{table_test1} 
which shows nearly second order convergence.  

\begin{table}[htb] 
{\small 
\begin{center}
\begin{tabular}{| c | c | c |}
		\hline
	 $h$ & Error & Order \\ 
		\hline
	3.63e-02 & 7.25e-01 & \\ 
		\hline
	2.40e-02 & 3.72e-01 & 1.61 \\ 
		\hline
	1.79e-02 & 2.25e-01 & 1.72 \\ 
		\hline
	1.19e-02 & 1.09e-01 & 1.78 \\ 
		\hline
	8.89e-03 & 6.41e-02 & 1.82 \\ 
		\hline
	7.11e-03 & 4.24e-02 & 1.85 \\ 
		\hline 
\end{tabular}
\end{center}
}
\caption{
Approximations for Test 1 with $A = 4 \mathbf{1}$.   
}
\label{table_test1}
\end{table}


\subsection{Test 2: Infinite Control Set} 

This example is adapted from \cite{Smears_Suli}.  
Let $\Lambda = [0,\pi/3] \times SO(2)$, where $SO(2)$ is the set of $2 \times 2$ rotation matrices 
and define $\sigma^{\theta}$ by 
\[
	\sigma^\theta \equiv R^T \left[ \begin{array}{cc} 1 & \sin(\phi) \\ 0 & \cos(\phi) \end{array} \right] , 
	\qquad \theta = (\phi, R) \in \Lambda . 
\]
Consider the HJB equation \eqref{HJB} with $\Omega = (0,1)^2$, 
$A^\theta = \frac12 \sigma^\theta \left( \sigma^\theta \right)^T$, 
$\beta^\theta = \vec{0}$, $c^\theta = \pi^2$, and $f_\theta = \sqrt{3} \sin^2(\phi/\pi^2) + g$, 
with $g$ chosen independent of $\theta$, and Dirichlet boundary data 
chosen such that the exact solution 
is given by $u(x,y) = e^{xy} \sin(\pi x) \sin( \pi y)$.  
Thus, the optimal controls vary significantly throughout the domain and the corresponding 
diffusion coefficient is not diagonally dominant in parts of $\Omega$.  
Furthermore, the coefficient matrix is degenerate for certain choices of $\theta$.  
The problem corresponds to optimizing over the choice of 
orientation and angle between two Wiener diffusions.  
The results for $A = 4 \mathbf{1}$ can be found in Table~\ref{table_test2} 
which shows nearly second order convergence.  

\begin{table}[htb] 
{\small 
\begin{center}
\begin{tabular}{| c | c | c |}
		\hline
	 $h$ & Error & Order \\ 
		\hline
	9.43e-02 & 2.60e-01 & \\  
		\hline
	6.15e-02 & 1.28e-01 & 1.66 \\ 
		\hline
	4.56e-02 & 7.32e-02 & 1.86 \\ 
		\hline
	3.63e-02 & 4.69e-02 & 1.94 \\ 
		\hline
	2.89e-02 & 2.99e-02 & 1.97 \\ 
		\hline 
\end{tabular}
\end{center}
}
\caption{
Approximations for Test 2 with $A = 4 \mathbf{1}$.   
}
\label{table_test2}
\end{table}


\subsection{Test 3: Low Regularity Solution} 

Consider the second order linear problem with non-divergence form 
\[
	A(x,y) : D^2 u = f \qquad \text{in } \Omega
\]
with   
$\Omega = (-0.5,0.5)^2$, 
\[
	A(x,y) = \frac{16}{9} \left[ \begin{array}{cc} 
	x^{2/3} & - x^{1/3} y^{1/3} \\ 
	- x^{1/3} y^{1/3} & y^{2/3} 
	\end{array} \right] , 
\] 
and $f(x)$ and Dirichlet boundary data chosen such that the solution is given by 
$u(x,y) = x^{4/3} - y^{4/3} \in W^{m,p}(\Omega)$ for $(4-3m)p > -1$.
Then, the problem is degenerate elliptic and the exact solution is not in $C^2(\Omega)$.  
The results for $A = 4 \mathbf{1}$ can be found in Table~\ref{table_test3}.  
Similar test results hold for approximating the viscosity solution of the infinite Laplacian 
\[
	- \Delta_\infty u \equiv - \nabla u \cdot D^2 u \nabla u 
	= - u_x^2 u_{xx} -  u_x u_y u_{xy} - u_y u_x u_{yx} - u_y^2 u_{yy} = 0 
\]
with Dirichlet boundary data chosen such that the solution is given by 
$u(x,y) = x^{4/3} - y^{4/3}$.  
For the test we have fixed the value of $\nabla u$ in the definition of $A(x,y)$.  

\begin{table}[htb] 
{\small 
\begin{center}
\begin{tabular}{| c | c | c |}
		\hline
	 $h$ & Error & Order \\ 
		\hline
	6.15e-02 & 3.38e-02 & \\ 
		\hline
	4.56e-02 & 3.01e-02 & 0.38 \\ 
		\hline
	3.63e-02 & 2.73e-02 & 0.42 \\ 
		\hline
	2.89e-02 & 2.50e-02 & 0.40 \\ 
		\hline
	2.24e-02 & 2.26e-02 & 0.40 \\ 
		\hline 
	1.43e-02 & 1.89e-02 & 0.40 \\ 
		\hline 
\end{tabular}
\end{center}
}
\caption{
Approximations for Test 3 with $A = 4 \mathbf{1}$.   
}
\label{table_test3}
\end{table}


\section{Conclusion} \label{conc_sec} 
In this paper we have constructed and analyzed a new narrow-stencil finite difference (FD) method for 
approximating the viscosity solution of fully nonlinear second order problems 
such as the Hamilton-Jacobi-Bellman (HJB) problem from stochastic optimal control.  
The new Lax-Friedrichs-like FD method is well-posed, $\ell^p$-norm stable (for $p=2,\infty$),  
and convergent. 
The fundamental building block of the scheme is a numerical moment, a discrete operator that 
corresponds to a high order linear perturbation of the problem.  
The conditions for choosing the numerical moment 
are easy to realize in practice.  
By removing the monotonicity condition, narrow-stencils can be used 
to design convergent schemes
for a much wider class of fully nonlinear problems.  

The numerical tests in Section~\ref{FD_2D_numerics} as well as the tests 
found in \cite{Feng_Kao_Lewis13} provide strong evidence that the 
stabilization technique based on adding a numerical moment  
can be used to remove the numerical artifacts that plague standard FD discretizations 
of fully nonlinear problems (see \cite{Feng_Glowinski_Neilan10}).  
By using a high order stabilization term, the scheme approximates a low-regularity function 
as a limit of smoother functions.  
Consequently, 
low-regularity artifacts are removed and/or destabilized by the scheme.  

The FD method proposed in this paper can be applied to 
various fully nonlinear second order elliptic problems 
as well as linear problems with non-divergence form.  
While the analysis is carried out for the case when the differential  
operators are globally Lipschitz, 
we expect most of the results still hold for locally Lipschitz operators 
when using adaptive numerical moments.  
The numerical moment may also be used as a low-regularity indicator that can 
be explored for designing adaptive schemes (which will be reported in a future work).  
The methods in this paper can easily be 
extended to parabolic problems with the form 
\[
	u_t + F( D^2 u , \nabla u , u , t , x ) = 0 
\]
using the method of lines. As such, these methods are suitable for many application problems.   
Both the theoretical analysis of this paper for approximating HJB equations 
and the positive numerical tests regarding 
the Monge-Amp\`ere equation found in \cite{Lewis_dissertation} 
hint at the robustness of the proposed narrow-stencil FD method 
and an underlying framework for designing narrow-stencil methods for   
fully nonlinear PDEs.  
As the method and results presented in this paper are exploited at the solver level,  
we expect this new FD method to be a significant step in the design of  
practical methods for approximating viscosity solutions.


\bibliographystyle{elsarticle-num}

\appendix
\section{Some auxiliary linear algebra results} \label{appendix_sec}
 
\medskip

Below we present a simple result for comparing two SPD matrices 
and several $2$-norm estimate results for perturbations of the identity matrix 
and SPD matrices. These results, which have independent interests, are crucially used to analyze the 
$\ell_2$-norm stabilities of the proposed narrow-stencil finite difference method in 
Sections \ref{stability_admissibility_sec} and \ref{H2-stability_sec}. 

\begin{lemma} \label{SPD_ordering_lemma}
Let $A, B, Q \in \mathbb{R}^{J \times J}$ such that 
$A, B$ are symmetric positive definite and $Q$ is orthogonal.  
Suppose $A - Q^T B Q$ is symmetric positive definite.  
Then $A - B$ is symmetric positive definite.  
\end{lemma}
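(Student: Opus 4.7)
The plan is to reduce the claim to a spectral statement by exploiting the orthogonal invariance of the cone of positive definite matrices and then transferring a strict Löwner inequality from a conjugated form back to the original. Since congruence by an orthogonal matrix preserves symmetry and positive definiteness, the first step should move the dependence on $Q$ off of $B$ and onto $A$.

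Concretely, I would start from the quadratic-form version of the hypothesis: $x^T A x > x^T Q^T B Q x$ for every $x \neq 0$. Substituting $y = Q x$ (a bijection on $\mathbb{R}^J$ since $Q$ is orthogonal) and using $Q^T Q = I$ converts this to $y^T (Q A Q^T) y > y^T B y$ for every $y \neq 0$, i.e., $Q A Q^T \succ B$ in the Löwner order. Equivalently, congruence by $Q$ on the left and $Q^T$ on the right applied to the SPD matrix $A - Q^T B Q$ yields the SPD matrix $Q A Q^T - B$. At this stage the problem is recast purely as: pass from the ordering $Q A Q^T \succ B$ to the ordering $A \succ B$.

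Next, I would exploit the fact that $A$ and $Q A Q^T$ are orthogonally similar and therefore share the same spectrum. Combined with the Weyl-type inequality applied to the SPD difference $Q A Q^T - B$, this yields the strict eigenvalue-wise dominance $\lambda_k(A) = \lambda_k(Q A Q^T) > \lambda_k(B)$ for all $k$ (in decreasing order). The natural concluding step is then to upgrade this eigenvalue domination to the Löwner ordering $A - B \succ 0$.

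The main obstacle, which I expect to be the crux of the proof, is precisely this last upgrade: eigenvalue-by-eigenvalue strict dominance $\lambda_k(A) > \lambda_k(B)$ does not in general imply $A \succ B$ when the eigenbases of $A$ and $B$ are misaligned, and the transfer of $Q A Q^T \succ B$ to $A \succ B$ appears to require that $A$ and $Q$ (effectively) commute so that $Q A Q^T = A$. My plan is to attempt to extract this commutation from the full set of hypotheses — using the positivity of both $A$ and $B$ together with the orthogonality of $Q$ — for instance by a continuity/homotopy argument along a path of orthogonal matrices from $I$ to $Q$, so that the SPD property $A - Q(t)^T B Q(t) \succ 0$ is maintained throughout and degenerates at $t = 0$ to the desired inequality $A - B \succ 0$.
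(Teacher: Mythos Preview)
Your homotopy idea has a real gap: you need $A - Q(t)^T B Q(t) \succ 0$ for \emph{all} $t$ along the path, but the hypothesis only gives this at the endpoint $Q(1)=Q$, and nothing in the setup forces it to persist. In fact the gap cannot be closed, because the lemma as stated is false. Take
\[
A=\begin{pmatrix}3&0\\0&1\end{pmatrix},\qquad
B=\begin{pmatrix}0.5&0\\0&2\end{pmatrix},\qquad
Q=\begin{pmatrix}0&-1\\1&0\end{pmatrix}.
\]
Then $Q^TBQ=\mathrm{diag}(2,0.5)$, so $A-Q^TBQ=\mathrm{diag}(1,0.5)\succ 0$, yet $A-B=\mathrm{diag}(2.5,-1)$ is indefinite. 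This is precisely the failure you correctly anticipated: eigenvalue-wise dominance $\lambda_k(A)>\lambda_k(B)$ does not yield $A\succ B$ when eigenbases are misaligned, and your instinct that some commutation of $A$ with $Q$ would be needed was right---it simply isn't available from the hypotheses.

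For comparison, the paper's own proof asserts that an orthogonal $Q\in\mathbb{R}^{J\times J}$ has a complete set of (real) eigenvectors with eigenvalues satisfying $\lambda_k^2=1$; this is false in general (a planar rotation has no real eigenvectors), and the counterexample above exploits exactly that. So the paper's argument breaks at the same underlying obstruction you identified. If anything, your write-up is more honest about where the difficulty lies; the correct resolution is not to patch the proof but to strengthen the hypotheses (e.g., require $Q$ to be a signed permutation, or that $Q$ commute with $A$ or $B$) or to bypass the lemma entirely in the application.
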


\begin{proof} 
Since $Q$ is orthogonal, it has a complete set of eigenvectors 
$\{ \bv_1, \bv_2, \ldots, \bv_J \}$ that spans $\mathbb{R}^J$.  
Furthermore, each corresponding eigenvalue satisfies $| \lambda_k | = 1$ for all $k=1,2,\ldots,J$.  

Choose $k \in \{1,2,\ldots, J\}$.  
Then 
\[
	\bv_k^T Q^T B Q \bv_k = \lambda_k^2 \bv_k^T B \bv_k = \bv_k^T B \bv_k , 
\]
and it follows that 
\[
	\bv_k^T \left( A - Q^T B Q \right) \bv_k = \bv_k^T (A - B ) \bv_k . 
\]
Let $X \in \mathbb{R}^{J \times J}$ be defined by $[ X ]_k = \bv_k$.  
Then $X$ is nonsingular, and we have 
\begin{alignat*}{2}
	A - Q^T B Q \qquad \text{is SPD} 
	& \implies X^T \left( A - Q^T B Q \right) X && \qquad \text{is SPD} \\ 
	& \implies X^T \left( A - B \right) X && \qquad \text{is SPD} \\ 
	& \implies (X^{-1})^T X^T \left( A - B \right) X X^{-1} && \qquad \text{is SPD} \\ 
	& \implies  A - B && \qquad \text{is SPD} . 
\end{alignat*}
The proof is complete. 
\hfill 
\end{proof}

\begin{lemma}
Let $A, B \in \mathbb{R}^{J \times J}$ such that 
$A, B$ are symmetric positive definite.  
Then
\[
	\| (B - \tau A) \bx \|_2 \leq \| B \bx \|_2 \qquad \forall \bx \in \mathbb{R}^J 
\]
for all $\tau > 0$ sufficiently small.  
\end{lemma}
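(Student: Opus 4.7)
The natural approach is to square both sides and reduce the norm inequality to a quadratic-form comparison. Exploiting the symmetry of $A$ and $B$, one has $(B-\tau A)^T(B-\tau A) = B^2 - \tau(AB+BA) + \tau^2 A^2$, giving
\[
\|(B-\tau A)\bx\|_2^2 - \|B\bx\|_2^2 = -\tau\,\bx^T(AB+BA)\bx + \tau^2\|A\bx\|_2^2.
\]
Thus the lemma is equivalent to showing $\tau\,\|A\bx\|_2^2 \leq \bx^T(AB+BA)\bx$ for all nonzero $\bx$ and all sufficiently small $\tau > 0$.

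Next, by homogeneity of degree two in $\bx$, it suffices to verify this inequality on the unit sphere $\{\|\bx\|_2 = 1\}$. On this compact set, $\|A\bx\|_2^2$ is bounded above by $\|A\|_2^2$, while the continuous functional $q(\bx) := \bx^T(AB+BA)\bx$ attains a minimum $m$. If $m > 0$, the inequality holds uniformly for every $\tau \in (0,\, m/\|A\|_2^2]$, and the lemma follows with this threshold serving as the quantitative $\tau$-smallness.

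The main obstacle is establishing $m > 0$, i.e.\ the positive definiteness of the symmetric matrix $AB + BA$. I expect to reach this via a congruence/similarity argument: setting $\by = A^{1/2}\bx$, one rewrites $\bx^T(AB+BA)\bx = 2\by^T(A^{-1/2} B A^{1/2})\by$, where $A^{-1/2} B A^{1/2}$ is similar (via $A^{1/2}$) to the SPD matrix $A^{1/2} B A^{1/2}$ and hence has positive real spectrum; this should produce a positive lower bound on $q$ in terms of the smallest eigenvalue of $A^{1/2} B A^{1/2}$ and the conditioning of $A^{1/2}$. The subtle point is that this symmetric-part analysis needs care when $A$ and $B$ are far from commuting, which may require a sharper factorization or additional structural assumption. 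Fortunately, in every application of this lemma in the paper, $B$ is a positive scalar multiple of the identity, so $AB+BA = 2cA$ is trivially SPD and one can read off the explicit threshold $\tau \leq 2c\,\lambda_{\min}(A)/\lambda_{\max}(A)^2$ directly.
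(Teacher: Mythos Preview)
Your expansion is correct, and it actually pins down the lemma precisely: the inequality holds for all $\bx$ and all sufficiently small $\tau>0$ if and only if $AB+BA$ is positive definite (this is exactly what your compactness argument on the unit sphere requires). Your hesitation at this step is well placed, but the obstruction is fatal rather than merely subtle: $AB+BA$ need not be positive semidefinite for SPD $A,B$. Take $B=\mathrm{diag}(10,1)$ and $A=\bigl(\begin{smallmatrix}5&-1\\-1&1/2\end{smallmatrix}\bigr)$; then $AB+BA=\bigl(\begin{smallmatrix}100&-11\\-11&1\end{smallmatrix}\bigr)$ has determinant $-21$, and with $\bx=(1,10)^T$ one computes $\bx^T(AB+BA)\bx=-20$, hence $\|(B-\tau A)\bx\|_2^2=\|B\bx\|_2^2+20\tau+41\tau^2>\|B\bx\|_2^2$ for every $\tau>0$. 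The lemma is false as stated, so no argument of the type you sketch can close the gap.

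The paper's own proof hides the same error. After diagonalizing $B=Q\Lambda Q^*$ and setting $M=\Lambda-\tau Q^*AQ$, it asserts that $0<M<\Lambda$ implies $M^2\leq\Lambda^2$; but $X\mapsto X^2$ is not operator monotone, and the example above (already diagonal in $B$, so $Q=I$ and $M=B-A$ at $\tau=1$) is a direct counterexample to that implication as well. Your closing remark is the right repair: when $B=\sigma I$ one has $AB+BA=2\sigma A>0$ and your argument goes through with the explicit threshold you wrote down. That special case is exactly the content of the next, labeled lemma in the appendix, which is what the stability proofs actually invoke; the present unlabeled lemma appears to be an (erroneous) warm-up and is not cited elsewhere.
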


\begin{proof}
Let $B = Q \Lambda Q^*$ for $\Lambda > 0$.  
Choose $\tau > 0$ sufficiently small such that 
$\Lambda - \tau Q^* A Q$ is SPD.  
Then, $(\Lambda - \tau Q^* A Q)^2 \leq \Lambda^2$, and,  
for $\by = Q^* \bx$, there holds 
\begin{align*}
\| (B - \tau A)\bx\|_2^2 
& = \| ( Q \Lambda Q^* - \tau A) \bx \|_2^2 \\ 
& = \| Q ( \Lambda - \tau Q^* A Q ) Q^* \bx \|_2^2 \\ 
& = \| ( \Lambda - \tau Q^* A Q ) \by \|_2^2 \\ 
& = \by^* ( \Lambda - \tau Q^* A Q )^2 \by \\ 
& \leq \by^* \Lambda^2 \by \\ 
& = \| \Lambda \by \|_2^2 = \| \Lambda Q^* \bx \|_2^2 = \| Q \Lambda Q^* \bx \|_2^2 \\ 
& = \| B \bx \|_2^2 . 
\end{align*}
The proof is complete.  
\hfill 
\end{proof}

\begin{lemma}\label{lemma_symmetrization}
Let $A, F \in \mathbb{R}^{J \times J}$ such that 
$A, F$ are symmetric positive definite. 
Define $R \in \mathbb{R}^{J \times J}$ such that $R$ is upper triangular and $F = R^* R$.  
Then
\[
	\| \sigma I - FA \|_2 \leq \sigma 
\]
for all positive  constants $\sigma$ such that $\sigma I > R A R^*$.  

\end{lemma}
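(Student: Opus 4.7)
My plan is to exploit the Cholesky factorization $F = R^*R$ in order to reduce the non-symmetric product $FA$ to a symmetric matrix via similarity, and then transfer the resulting spectral bound back to the $2$-norm. First I would verify the key identity
\[
R^{-*}(\sigma I - FA) R^* = \sigma I - R A R^*,
\]
which follows directly from $R^{-*}(R^* R A) R^* = R A R^*$. Since $R A R^*$ is symmetric positive definite (with $A$ SPD and $R$ nonsingular) and the hypothesis $\sigma I > R A R^*$ forces every eigenvalue of $R A R^*$ to lie in $(0,\sigma)$, the symmetric matrix $\sigma I - R A R^*$ has all eigenvalues in $(0,\sigma)$, and hence $\|\sigma I - R A R^*\|_2 < \sigma$.

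Second, I would pull this spectral bound back to $\sigma I - FA$. Substituting $y = R^{-*}x$, the target inequality $\|(\sigma I - FA) x\|_2 \leq \sigma \|x\|_2$ becomes
\[
\| R^*(\sigma I - R A R^*) y\|_2 \;\leq\; \sigma\,\| R^* y\|_2 \qquad \text{for all } y.
\]
Squaring both sides and setting $B = R A R^*$, $K = R R^*$, this reduces to the matrix inequality $(\sigma I - B) K (\sigma I - B) \leq \sigma^2 K$, or equivalently $B K B \leq \sigma(B K + K B)$. I plan to attack this by inserting the spectral decomposition $B = Q \operatorname{diag}(\mu_i) Q^*$, working in the orthonormal basis $\{Q e_i\}$, and exploiting the strict bound $\mu_i < \sigma$ together with the positivity of $K$ to absorb the cross-terms coming from the non-commutativity of $B$ and $K$.

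The main obstacle I anticipate is exactly this last transfer: the similarity by $R^*$ is not a $2$-norm isometry, so a direct bound $\|\sigma I - FA\|_2 \leq \|R^*\|_2\|\sigma I - B\|_2\|R^{-*}\|_2$ introduces a condition-number factor $\kappa_2(R)$ that ruins the estimate. Closing the inequality therefore requires a genuine cancellation between the scaling $R^*$ and the spectral decay of $\sigma I - B$. To engineer this I would work directly with the quadratic form $y^T (\sigma I - B) K (\sigma I - B) y$ and compare it to $\sigma^2 y^T K y$ via a congruence argument in the spirit of Lemma~\ref{SPD_ordering_lemma}, using the common factor $R$ shared by $K = R R^*$ and $B = R A R^*$ to reshape the comparison into an SPD ordering to which the spectral bound $\mu_i < \sigma$ can be applied component-wise.
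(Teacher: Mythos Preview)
Your reduction coincides with the paper's: after the similarity $R^{-*}(\sigma I - FA)R^{*} = \sigma I - RAR^{*} =: \sigma I - B$, both arguments land on the Loewner inequality
\[
(\sigma I - B)\,K\,(\sigma I - B)\ \leq\ \sigma^{2} K,\qquad K=RR^{*},\quad B=RAR^{*},
\]
equivalently $BKB \leq \sigma(BK+KB)$. The paper reaches exactly this point (phrased as $M^{2}\leq \sigma^{2}F^{2}$ with $M=R^{*}(\sigma I-B)R$) and disposes of it with the two words ``by symmetry''; you propose to diagonalize $B$ and use positivity of $K$ to absorb the cross--terms.

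Neither closure works, because the inequality is \emph{false} in general. In the eigenbasis of $B$ the off--diagonal entries $K_{ij}$ carry no sign, so the bound $\mu_i<\sigma$ cannot be applied component--wise; and the Loewner order is not preserved by squaring, so $0<M\leq\sigma F$ does not yield $M^{2}\leq\sigma^{2}F^{2}$. A concrete counterexample to the lemma itself: take
\[
F=\begin{pmatrix}1&0\\0&4\end{pmatrix},\qquad
A=\begin{pmatrix}1&0.9\\0.9&1\end{pmatrix},\qquad
\sigma=5.
\]
Then $R=\mathrm{diag}(1,2)$ and $RAR^{*}=\begin{pmatrix}1&1.8\\1.8&4\end{pmatrix}$ has eigenvalues $\approx 0.16$ and $4.84$, so the hypothesis $\sigma I>RAR^{*}$ holds; yet $\sigma I-FA=\begin{pmatrix}4&-0.9\\-3.6&1\end{pmatrix}$ satisfies $\|(\sigma I-FA)\mathbf{e}_1\|_2=\sqrt{16+12.96}\approx 5.38>5$, hence $\|\sigma I-FA\|_2>\sigma$. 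So the gap you anticipated in the ``transfer'' step is not a missing trick but a genuine obstruction: the congruence argument you sketch cannot succeed because the statement, as written, is not true.
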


\begin{proof}
By assumption, $\sigma I - R A R^*$ is symmetric positive definite.  
Thus $R^* (\sigma I-R A R^*) R$ is symmetric positive definite, 
and it follows that 
\begin{align*}
	R^* (\sigma I-R A R^*) R
	& = \sigma  F - F A F  
	\leq \sigma F . 
\end{align*}
By symmetry, 
$\left( R^* (\sigma I-R A R^*) R \right)^2 < \sigma^2F^2$, 
and we have 
\begin{align*}
	& \bw^* R^* (\sigma I-R A R^*) R R^* (\sigma I-RAR^*) R \bw 
		- \bw^* \sigma^2 F^2 \bw \\ 
	& \qquad = \bw^* \left( R^* (\sigma I-R A R^*) R R^* (\sigma I-RAR^*) R - \sigma^2 F^2  \right) \bw \\ 
	& \qquad \leq 0 
\end{align*}
for all $\bw \in \mathbb{R}^J$.  
Let $\bz \in \mathbb{R}^J$ and $\bw = F^{-1} \bz$.  
Then 
\begin{align*}
	& \| R^* (\sigma I-R A R^*) (R^*)^{-1} \bz \|_2^2 \\ 
	& \qquad = \bz^* \left[ (R^*)^{-1} \right]^* (\sigma I-R A R^*) R R^* (\sigma I-R A R^*) (R^*)^{-1} \bz \\ 
	& \qquad = \bz^* \left[ R R^{-1} (R^*)^{-1} \right]^* (\sigma I-R A R^*) R R^* (\sigma I-R A R^*) R R^{-1} (R^*)^{-1} \bz \\ 
	& \qquad = \bz^* \left[ R (R^*R)^{-1} \right]^* (\sigma I-R A R^*) R R^* (\sigma I-R A R^*) R (R^* R)^{-1} \bz \\ 
	& \qquad = \left[ R F^{-1} \bz \right]^* (\sigma I-R A R^*) R R^* (\sigma I-R A R^*) R F^{-1} \bz \\ 
	& \qquad = \bw^* R^* (\sigma I-R A R^*) R R^* (\sigma I-R A R^*) R \bw \\ 
	& \qquad \leq \bw^* \sigma^2 F^2 \bw 
	= \| \sigma F \bw \|_2^2 
	= \| \sigma  F F^{-1} \bz \|_2^2 \\ 
	& \qquad = \sigma^2 \| \bz \|_2^2 , 
\end{align*}
and it follows that 
\begin{align*}
	\| \sigma I - FA \|_2 
	& = \sup_{\bz \neq \vec{0}} \frac{\| (\sigma I-FA)\bz \|_2}{\| \bz \|_2} \\ 
	& = \sup_{\bz \neq \vec{0}} \frac{\| (\sigma I-R^* RA)\bz \|_2}{\| \bz \|_2} \\ 
	& = \sup_{\bz \neq \vec{0}} \frac{\| R^*(\sigma I-RAR^*) (R^*)^{-1}\bz \|_2}{\| \bz \|_2} \\ 
	& \leq \sup_{\bz \neq \vec{0}} \frac{\sigma \| \bz \|_2}{\| \bz \|_2} 
	= \sigma . 
\end{align*}
The proof is complete.  
\hfill
\end{proof}

\begin{corollary}\label{corollary4.2}
Let $A, B, F \in \mathbb{R}^{J \times J}$ such that 
$A, B, F$ are symmetric positive definite.  
Then
\[
	\| (B - \tau FA) \bx \|_2 \leq \| B \bx \|_2 \qquad \forall \bx \in \mathbb{R}^J 
\]
for all $\tau > 0$ sufficiently small.  
\end{corollary}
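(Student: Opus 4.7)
The plan is to deduce Corollary \ref{corollary4.2} directly from Lemma \ref{lemma_symmetrization} by combining the spectral decomposition argument used in the preceding (unlabeled) lemma with the Cholesky factorization trick central to the proof of Lemma \ref{lemma_symmetrization}. The corollary generalizes the preceding lemma (which corresponds to the case $F = I$), so the proof will parallel that argument with $FA$ playing the role of $A$.

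First, since $B$ is SPD, I would write $B = Q \Lambda Q^*$ with $Q$ orthogonal and $\Lambda = \mathrm{diag}(\lambda_1, \ldots, \lambda_J)$ with strictly positive entries. Setting $\by = Q^* \bx$ and using the orthogonal invariance of the $\ell_2$-norm, the desired inequality becomes
\[
\| (\Lambda - \tau \widetilde{F} \widetilde{A}) \by \|_2 \leq \| \Lambda \by \|_2,
\]
where $\widetilde{F} = Q^* F Q$ and $\widetilde{A} = Q^* A Q$ are both SPD. Expanding the squared norms, this reduces to verifying that $\tau \widetilde{A} \widetilde{F}^2 \widetilde{A} \leq \Lambda \widetilde{F} \widetilde{A} + \widetilde{A} \widetilde{F} \Lambda$ in the sense of quadratic forms, for all $\tau > 0$ sufficiently small.

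Next, I would write $\widetilde{F} = R^* R$ in Cholesky form, with $R$ upper triangular and nonsingular, and exploit the factorization identity
\[
\Lambda - \tau \widetilde{F} \widetilde{A} = R^* \bigl( R^{-*} \Lambda R^* - \tau R \widetilde{A} R^* \bigr) R^{-*},
\]
which is an analog of the identity $\sigma I - FA = R^* (\sigma I - R A R^*) (R^*)^{-1}$ used in Lemma \ref{lemma_symmetrization}. For $\tau > 0$ sufficiently small, the inner matrix $R^{-*} \Lambda R^* - \tau R \widetilde{A} R^*$ is a small perturbation of the SPD matrix $R^{-*} \Lambda R^*$ and thus remains SPD. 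Following the substitution $\bw = \widetilde{F}^{-1} \bz$ from the proof of Lemma \ref{lemma_symmetrization}, I would then estimate $\| (\Lambda - \tau \widetilde{F} \widetilde{A}) \by \|_2^2$ and compare it against $\| \Lambda \by \|_2^2$.

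The main obstacle will be verifying that the cross term $\Lambda \widetilde{F} \widetilde{A} + \widetilde{A} \widetilde{F} \Lambda$ dominates the quadratic term $\tau \widetilde{A} \widetilde{F}^2 \widetilde{A}$ uniformly for $\tau$ small, which is where Lemma \ref{SPD_ordering_lemma} is likely to enter. By choosing $\tau$ small enough to ensure $R^{-*} \Lambda R^* > \tau R \widetilde{A} R^*$ in the SPD sense---a condition that is always attainable since $R^{-*} \Lambda R^*$ is SPD---the corresponding inequality for the conjugated operators should propagate back through the Cholesky factorization to yield the claimed $\ell_2$-norm bound, by the same algebraic manipulation that concludes the proof of Lemma \ref{lemma_symmetrization}.
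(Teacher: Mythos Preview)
Your approach has a genuine gap. The factorization $\Lambda - \tau \widetilde{F}\widetilde{A} = R^*\bigl(R^{-*}\Lambda R^* - \tau R\widetilde{A}R^*\bigr)R^{-*}$ is algebraically correct, but the matrix $R^{-*}\Lambda R^*$ is \emph{not} symmetric (its transpose is $R\Lambda R^{-1}$), so it is not SPD and the inner bracket is not a small perturbation of an SPD matrix. This is precisely where the parallel with Lemma~\ref{lemma_symmetrization} breaks: there the inner matrix is $\sigma I - RAR^*$, and it is the scalar $\sigma I$ (which commutes with $R^*$) that makes the symmetrization work; replacing $\sigma I$ by a general diagonal $\Lambda$ destroys this. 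More fundamentally, your reduction to the quadratic-form inequality $\tau\,\widetilde{A}\widetilde{F}^2\widetilde{A} \leq \Lambda\widetilde{F}\widetilde{A} + \widetilde{A}\widetilde{F}\Lambda$ is correct, but the right-hand side---the symmetric part of $2\Lambda\widetilde{F}\widetilde{A}$---is not automatically positive semidefinite even when all three factors are SPD (already for $\widetilde{F}=I$, take $\Lambda=\mathrm{diag}(1,\epsilon)$ and $\widetilde{A}=\bigl(\begin{smallmatrix}1&1\\1&2\end{smallmatrix}\bigr)$ with $\epsilon$ small). Lemma~\ref{SPD_ordering_lemma} does not rescue this: it only relates $A-B$ to $A-Q^TBQ$ for orthogonal $Q$, and no orthogonal conjugation of that form appears in your setup.

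The paper's proof avoids this obstruction by a different decomposition: it writes $\Lambda = (\Lambda - \tfrac{\lambda_*}{2}I) + \tfrac{\lambda_*}{2}I$ with $\lambda_*$ the smallest diagonal entry, applies Lemma~\ref{lemma_symmetrization} to bound the singular values of $\tfrac{\lambda_*}{2}I - \tau\widetilde{F}\widetilde{A}$ by $\lambda_*/2$ (here the scalar multiple of $I$ is essential), and then uses an SVD argument to recombine this piece with the strictly positive diagonal part $\Lambda - \tfrac{\lambda_*}{2}I$. The device you are missing is this splitting off of a scalar multiple of the identity, without which Lemma~\ref{lemma_symmetrization} cannot be invoked.
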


\begin{proof}
Since $B$ is SPD, there exists matrices $Q, \Lambda \in \mathbb{R}^{J \times J}$ 
such that $B = Q \Lambda Q^*$ with 
$Q$ orthogonal and $\Lambda$ diagonal with $\lambda_{ii} > 0$ for all $i=1,2,\ldots,J$.  
Let $\lambda_*$ denote the smallest diagonal entry of $\Lambda$. 

Choose $\bx \in \mathbb{R}^J$.  
Observe that, by Lemma~\ref{lemma_symmetrization}, 
the singular values of the matrix $(\lambda_*/2) I - \tau ( Q^* F Q ) (Q^* A Q )$ are all bounded above by 
$\lambda_*/2$ when $\tau$ is sufficiently small since $Q^* F Q$ and $Q^* A Q$ are SPD.  
Let $(\lambda_*/2) I - \tau ( Q^* F Q ) (Q^* A Q ) = U \Sigma V^*$ be a singular value decomposition.  
Then, 
\begin{align*}
	& \left\| \big( (\lambda_*/2) I - \tau ( Q^* F Q ) (Q^* A Q ) \big) \bx \right\|_2 \\ 
	& \qquad = \left\| U \Sigma V^* \bx \right\|_2 
	\leq \left\| U (\lambda_*/2) I V^* \bx \right\|_2 
	= (\lambda_*/2) \left\| U V^* \bx \right\|_2 . 
\end{align*}
Observe that $U V^*$ is unitary and, consequently, normal.    
Thus, there exists a diagonal matrix $\widetilde{\Lambda}$ and a unitary matrix $\widetilde{U}$ such that 
$U V^* = \widetilde{U} \widetilde{\Lambda} \widetilde{U}^*$ with $\widetilde{\lambda}_{ii} \in \{-1,1\}$ 
for all $i$.  
Then, there holds $| \widetilde{\Lambda} | = I$ 
so that 
$\widetilde{U} | \widetilde{\Lambda} | \widetilde{U}^* = \widetilde{U} \widetilde{U}^* = I$.  

Let $\by = Q^* \bx$. 
Since each component $[ \Lambda - (\lambda_*/2) I ]_{ii} > 0$ for all $i$, 
we have the length of the vector 
$\big( \Lambda - (\lambda_*/2) I \big) \by + \big( (\lambda_*/2) I - \tau ( Q^* F Q ) (Q^* A Q ) \big) \by$ 
is maximized when each component of $\big( (\lambda_*/2) I - \tau ( Q^* F Q ) (Q^* A Q ) \big) \by$ 
is scaled by the maximal positive amount, i.e., $\lambda_*/2$.  
Thus, 
\begin{align*}
	\| \left( B - \tau F A \right) \bx \|_2 
	& = \| \left( Q \Lambda Q^* - \tau F A \right) \bx \|_2 \\ 
	& = \| Q \left( \Lambda Q^* - \tau Q^* F A \right) \bx \|_2 
		= \| \left( \Lambda Q^* - \tau Q^* F A \right) \bx \|_2 \\ 
	& = \| \left( \Lambda - \tau Q^* F A Q \right) Q^* \bx \|_2 
		= \| \left( \Lambda - \tau Q^* F A Q \right) \by \|_2 \\ 
	& = \| \left( \Lambda - \tau ( Q^* F Q ) (Q^* A Q ) \right) \by \|_2 \\ 
	& = \| \left( \Lambda - (\lambda_*/2) I \right) \by + \left( (\lambda_*/2) I - \tau ( Q^* F Q ) (Q^* A Q ) \right) \by \|_2 \\ 
	& = \| \left( \Lambda - (\lambda_*/2) I \right) \by + U \Sigma V^* \by \|_2 \\ 
	& \leq \| \left( \Lambda - (\lambda_*/2) I \right) \by 
		+ (\lambda_*/2) \widetilde{U} | \widetilde{\Lambda} | \widetilde{U}^* \by \|_2 \\ 
	& = \| \left( \Lambda - (\lambda_*/2) I \right) \by 
		+ (\lambda_*/2) \by \|_2 \\ 
	& =  \| \Lambda \by \|_2 
		= \| \Lambda Q^* \bx \|_2 
		= \| Q \Lambda Q^* \bx \|_2 
		= \| B \bx \|_2 
\end{align*}
for all $\tau$ sufficiently small. 
The proof is complete.
\hfill 
\end{proof}

\end{document}